\documentclass[12pt]{amsart}
\textwidth=6.5in \oddsidemargin=0in \evensidemargin=0in
\topmargin=0in \textheight=8.5in

\renewcommand*{\marginpar}[1]{} 
\usepackage[all]{xy}

\usepackage{amsmath, amsthm, amssymb,latexsym, graphics}
\usepackage[english]{babel}
\usepackage[T1]{fontenc}
\usepackage{amsfonts}
\usepackage{enumerate}
\usepackage{vmargin}
\usepackage{mathrsfs}
\usepackage{mathtools}
\usepackage{lmodern}
\usepackage[pagebackref=true]{hyperref}

\newcommand{\N}{\mathbb{N}}
\newcommand{\Z}{\mathbb{Z}}

\newcommand{\R}{\mathbb{R}}
\newcommand{\C}{\mathbb{C}}
\newcommand{\E}{\mathbb{E}}

\newcommand{\spr}[2]{\langle #1, #2 \rangle}
\newcommand{\ck}{\check{\phantom{i}}}

\newcommand{\HI}{H^\infty}

\newcommand{\Sobolev}{W}

\newcommand{\Sob}{\Sobolev_p}
\newcommand{\Soa}{\Sobolev^\alpha_p}

\newcommand{\Wor}{\widetilde{\mathcal{H}}}
\newcommand{\Wa}{\Wor^\alpha_p}

\newcommand{\Hor}{\mathcal{H}}
\newcommand{\Ha}{\Hor^\alpha_p}
\DeclareMathOperator{\BIP}{BIP}
\newcommand{\Sobexp}{\mathcal{W}}
\newcommand{\Sea}{\Sobexp^\alpha_p}

\newcommand{\equi}{\psi}

\newcommand{\dyad}{\varphi}

\newcommand{\ta}{\langle t \rangle^\alpha}
\newcommand{\tma}{\langle t \rangle^{-\alpha}}

\DeclareMathOperator{\supp}{supp}
\DeclareMathOperator{\Id}{id}
\DeclareMathOperator{\Rad}{Rad}

\DeclareMathOperator{\Str}{Str}
\DeclareMathOperator{\Hol}{Hol}
\DeclareMathOperator{\type}{type}
\DeclareMathOperator{\cotype}{cotype}

\let\Re=\relax \DeclareMathOperator{\Re}{Re}
\let\Im=\relax \DeclareMathOperator{\Im}{Im}

\newcommand{\bignorm}[1]{\Bigl\Vert#1\Bigr\Vert}

\newtheorem{thmalt}{Theorem}[section]

\theoremstyle{definition}
\newtheorem{rem}[thmalt]{Remark}
\newtheorem{defi}[thmalt]{Definition}
\newtheorem{thm}[thmalt]{Theorem}
\newtheorem{cor}[thmalt]{Corollary}
\newtheorem{lem}[thmalt]{Lemma}
\newtheorem{prop}[thmalt]{Proposition}
\newtheorem{exa}[thmalt]{Example}
\newtheorem{assumption}[thmalt]{Assumption}

\numberwithin{equation}{section}

\title[Spectral multiplier theorems via $\HI$ calculus and $R$-bounds]
 {Spectral multiplier theorems via $\HI$ calculus and $R$-bounds} 

\author[Ch. Kriegler]{Christoph Kriegler}
\address{Christoph Kriegler\\
Laboratoire de Math\'ematiques (CNRS UMR 6620)\\
Universit\'e Blaise-Pascal (Clermont-Ferrand 2)\\
Campus Universitaire des C\'ezeaux\\
3, place Vasarely\\
TSA 60026\\
CS 60026\\
63 178 Aubi\`ere Cedex\\
France
}
\email{christoph.kriegler@math.univ-bpclermont.fr}
\thanks{The first named author acknowledges financial support from the Franco-German University (DFH-UFA) and the Karlsruhe House of Young Scientists (KHYS). The second named author acknowledges support from CRC 1173, DFG (Deutsche Forschungsgemeinschaft)}

\author[L. Weis]{Lutz Weis}
\address{Lutz Weis\\
Karlsruher Institut f\"ur Technologie\\
Fakult\"at f\"ur Mathematik\\
Institut f\"ur Analysis\\
Englerstra\ss{}e 2\\
76131 Karlsruhe\\
Germany}
\email{lutz.weis@kit.edu}

\date{\today}
\subjclass[2010]{42A45, 47A60, 47B40, 47D03}
\keywords{Functional calculus, H\"ormander Type Spectral Multiplier Theorems}

\allowdisplaybreaks

\begin{document}


\begin{abstract}
We prove spectral multiplier theorems for H\"ormander classes $\Hor^\alpha_p$ for $0$-sectorial operators $A$ on Banach spaces assuming a bounded $\HI(\Sigma_\sigma)$ calculus for some $\sigma \in (0,\pi)$ and norm and certain $R$-bounds on one of the following families of operators:
the semigroup $e^{-zA}$ on $\C_+,$ the wave operators $e^{isA}$ for $s \in \R,$ the resolvent $(\lambda - A)^{-1}$ on $\C \backslash \R,$ the imaginary powers $A^{it}$ for $t \in \R$ or the Bochner-Riesz means $(1 - A/u)_+^\alpha$ for $u > 0.$
In contrast to the existing literature we neither assume that $A$ operates on an $L^p$ scale nor that $A$ is self-adjoint on a Hilbert space.
Furthermore, we replace (generalized) Gaussian or Poisson bounds and maximal estimates by the weaker notion of $R$-bounds, which allow for a unified approach to spectral multiplier theorems in a more general setting.
In this setting our results are close to being optimal.
Moreover, we can give a characterization of the ($R$-bounded) $\Hor^\alpha_1$ calculus in terms of $R$-boundedness of Bochner-Riesz means.
\end{abstract}

\maketitle

\section{Introduction}\label{Sec 1 Intro}

Classical spectral multiplier theorems go back to Mihlin and H\"ormander \cite{Ho} who proved that ``Fourier multiplier operators''
\begin{equation}
\label{equ-Fourier-multiplier}
u \mapsto f(-\Delta)u(\cdot) = \mathcal{F}^{-1}[f(|\xi|^2)\hat{u}(\xi)](\cdot)
\end{equation}
are bounded on $L^q(\R^d),\: 1 < q < \infty$ if $f$ satisfies a smoothness condition such as
\begin{equation}\label{equ-intro-hor}
\sup_{R > 0} \int_{R/2}^{2R} \left| t^k f^{(k)}(t) \right|^2 \frac{dt}{t} < \infty \quad (k = 0,1,\ldots,\alpha), \: \alpha = \lceil d/2 \rceil.
\end{equation}
Let us denote by $\Hor^\alpha_2$ the ``H\"ormander class'' of all functions $f : \R_+ \to \C$ satisfying \eqref{equ-intro-hor}.
Then the Fourier multiplier theorem of H\"ormander says that the mapping
\[ f \in \Hor^\alpha_2 \mapsto f(-\Delta) \in B(L^q(\R^d)) \]
defines an algebra homomorphism.
There is by now a large literature extending such spectral multiplier results to Laplace-type operators (including elliptic and Schr\"odinger operators) on $L^q$ spaces on manifolds, Lie groups and graphs \cite{Meda, Alex, MSt, CDMY, Mu, Duon, Chri, DuOS, Bluna, DuSY, COSY,SYY}, see also \cite{Ouha} and the references therein.
Typical consequences of spectral multiplier theorems for $f \in \Hor^\alpha_2$ and an operator $A$ on $L^q$ are norm estimates for important operator families attached to $A,$ e.g. the norm boundedness of the sets
\begin{align}
& \left\{ \left( \frac{\pi}{2} - |\theta| \right)^{\alpha} \exp(-te^{i\theta} A) :\: t > 0,\: \theta \in \left(-\frac{\pi}{2}, \frac{\pi}{2} \right) \right\} \tag*{$(S)_\alpha$} \label{S_alpha} \\
& \left\{ (1 + |s| A)^{-\alpha} \exp(isA) :\: s \in \R \right\} & \text{ Wave operators}\tag*{$(W)_\alpha$} \label{W_alpha} \\
& \left\{ (1 + |t|)^{-(\alpha + \epsilon)} A^{it} :\: t \in \R \right\} & \text{ Bounded imaginary powers} \tag*{$(\BIP)_\alpha$} \label{BIP_alpha} \\
& \left\{ (1 - A/u)^{\alpha + \epsilon - \frac12}_+ :\: u > 0 \right\} & \text{ Bochner-Riesz means} \tag*{$(\text{BR})_\alpha$} \label{BR_alpha}
\end{align}
for any $\epsilon > 0.$
Conversely, a common strategy to establish spectral multiplier theorems is to start from one of these families and estimate the corresponding representation formulas
\begin{align}
f(A) & = \frac{1}{2\pi} \int_\R \check{f}(t) e^{-itA} dt, & \check{f} \text{ the inverse Fourier-transform} \nonumber \\
f(A) & = \frac{1}{2\pi} \int_\R \mathfrak{M}(f)(s) A^{is} ds , & \mathfrak{M}(f) \text{ the Mellin-transform} \label{equ-inversion-formulas} \\
f(A) & = \frac{(-1)^{\lfloor \alpha \rfloor}}{\Gamma(\alpha)} \int_0^\infty f^{(\alpha)}(u)(u-A)^{\alpha - 1}_+ udu, & f^{(\alpha)}\text{ the (fractional) derivative} \nonumber
\end{align}
to obtain bounded operators $f(A)$ on $L^q(U)$ for all $f \in \Hor^\alpha_2.$

Usually, in a first step, one estimates one of the equations in \eqref{equ-inversion-formulas} for a function $f \in C^\infty_c(0,\infty)$ by using (generalized) Gaussian or Poisson kernel bounds or maximal estimates for the corresponding operator family in \eqref{S_alpha}, \eqref{W_alpha}, \eqref{BIP_alpha} and \eqref{BR_alpha}.
Then for a general $f \in \Hor^\alpha_2,$ one considers a dyadic decomposition $f(\cdot) = \sum_{n \in \Z} f \cdot \phi(2^{-n}\cdot)$ for an appropriate $\phi \in C^\infty(\frac12,2)$ and ``puts the pieces $f \phi(2^{-n} \cdot)$ together again'' by techniques related to the Littlewood-Paley theory.

In this paper, we explore an operator theoretic approach to spectral multiplier theorems.
In particular, we show that the ``Paley-Littlewood arguments'' for singular integrals can be replaced by a localization argument using the holomorphic $\HI$-calculus of the operator $A$ (see Section \ref{Sec Hor Mih classes}).
For many Laplace type operators, the boundedness of the $\HI(\Sigma_\sigma)$ calculus is already well known.
The various kernel bounds such as (generalized) Gaussian and Poisson bounds or maximal estimates which are commonly used in the first step, we replace by $R$-bounds:
A set $\tau$ of operators on an $L^q(U)$-space is called $R$-bounded if there is a constant $C$ such that for all $T_1,\ldots,T_n \in L^q(U)$ and $x_1,\ldots,x_n \in L^q(U),$ we have
\[ \left\| \left( \sum_{j = 1}^n |T_j x_j|^2 \right)^{\frac12} \right\|_{L^q(U)} \leq C \left\| \left( \sum_{j = 1}^n |x_j|^2 \right)^{\frac12} \right\|_{L^q(U)} \]
(the smallest $C$ in this inequality will be the $R$-bound $R(\tau)$ of $\tau$).
Clearly such estimates are intimately related to Littlewood-Paley theory and it is known that many (generalized) Gaussian- and Poisson estimates, also on metric measure spaces with the doubling property (see e.g. \cite{Blun,BlKua,KuWe}), as well as maximal estimates imply the above $R$-boundedness condition.

Therefore our operator theoretic approach allows us to present a unified approach to spectral multiplier theorems.
Furthermore, we are not restricted to the $L^q$-scale and self-adjoint operators on $L^2,$ but we can formulate theorems for $0$-sectorial operators on a Banach space.
Here are some samples.
(From now on we assume $\alpha \in (0,\infty),$ not just $\alpha \in \N.$)

\begin{thm}
\label{thm-intro-sufficient}
(see Theorems \ref{Thm Sufficient conditions BIP},\ref{Thm Sufficient conditions Sgr}, \ref{Thm Sufficient conditions Wave}).
Let $A$ be a $0$-sectorial operator on a space $L^q(U),\: 1 < q < \infty$ (more generally on a Banach space $X$ with Pisier's property $(\alpha)$).
Suppose furthermore that $A$ has a bounded $\HI(\Sigma_\omega)$ calculus for some $\omega \in (0,\frac{\pi}{2}).$
\begin{enumerate}
\item The $R$-boundedness of one of the sets \ref{S_alpha},\ref{W_alpha} or \ref{BIP_alpha} above implies that $A$ has an $R$-bounded $\Hor^\beta_2$ calculus for $\beta > \alpha + \frac12$ on $L^q(U),$ i.e. the set
\begin{equation}
\label{equ-intro-R-bdd-calculus}
 \left\{ f(A) :\: \|f\|_{\Hor^\beta_2} \leq 1 \right\} \text{ is }R\text{-bounded}.
\end{equation}
\item Conversely, \eqref{equ-intro-R-bdd-calculus} implies that each of the sets \ref{S_alpha}, \ref{W_alpha}, \ref{BIP_alpha} is $R$-bounded with $\alpha \geq \beta$ ($\alpha > \beta$ for the imaginary powers).
\end{enumerate}
\end{thm}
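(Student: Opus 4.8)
The plan is to prove both directions by passing through the representation formulas in \eqref{equ-inversion-formulas}, using the bounded $\HI(\Sigma_\omega)$ calculus only to glue dyadic pieces together. For part (1), start with $f \in C^\infty_c(0,\infty)$ and fix a dyadic resolution $\sum_{n \in \Z} \dyad(2^{-n}\cdot) \equiv 1$ on $(0,\infty)$ with $\dyad \in C^\infty_c(\frac12,2)$. For each $n$ write $f_n = f \cdot \tdyad(2^{-n}\cdot)$ with a slightly fatter cut-off $\tdyad$, so that $f_n(A) = f_n(A)\dyad(2^{-n}A)$ after enlarging the support, and $\dyad(2^{-n}A)$ comes from the $\HI$ calculus. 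The key technical estimate is that a single dilated bump $g(2^{-n}\cdot)$ applied to $A$ can be controlled by the given operator family: e.g. via $g(2^{-n}A) = \frac{1}{2\pi}\int_\R \check g(t) e^{-i 2^{-n} t A}\,dt$ (wave case), and the weight $(1 + |s|A)^{-\alpha}$ in \ref{W_alpha} is absorbed by multiplying and dividing by a suitable power of $\langle 2^{-n} t A\rangle$ that can be realized inside the $\HI$ calculus on $\dyad(2^{-n}A)L^q(U)$. The $\Hor^\beta_2$ norm of $f$ then dominates $\sup_n \|f_n(2^n\cdot)\|_{B^\beta_{2,?}}$-type quantities; summing over $n$ costs the extra half-derivative, which is why $\beta > \alpha + \frac12$ appears (a Cauchy–Schwarz / square-function step over the dyadic index $n$).

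The $R$-boundedness upgrade is where Pisier's property $(\alpha)$ enters: property $(\alpha)$ lets one replace scalar sums over $n$ by Rademacher (or Gaussian) averages and interchange them with the single-operator averages built into the definition of $R$-boundedness, so that an estimate of the form $\|f(A)\| \lesssim \|f\|_{\Hor^\beta_2}$ that is "uniform in a Rademacher-bounded way over the bump families" promotes to $R(\{f(A) : \|f\|_{\Hor^\beta_2}\le 1\}) < \infty$. Concretely one checks that the operators $\dyad(2^{-n}A)$ form an $R$-bounded family (consequence of the bounded $\HI$ calculus plus property $(\alpha)$, cf. the square-function estimates available from Section \ref{Sec Hor Mih classes}), the given family \ref{S_alpha}/\ref{W_alpha}/\ref{BIP_alpha} is $R$-bounded by hypothesis, and $R$-boundedness is stable under the integral averages in \eqref{equ-inversion-formulas} (integral means of $R$-bounded sets are $R$-bounded). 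Then $f(A) = \sum_n f_n(A)$ with each summand a product/integral of $R$-bounded families, and property $(\alpha)$ handles the sum over $n$; a density argument extends from $C^\infty_c(0,\infty)$ to all of $\Hor^\beta_2$.

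For part (2), the converse, one simply specializes: given the $R$-bounded $\Hor^\beta_2$ calculus, apply it to the explicit functions $z \mapsto (\frac{\pi}{2} - |\theta|)^\alpha e^{-t e^{i\theta} z}$, $z \mapsto (1+|s|z)^{-\alpha} e^{isz}$, and $z \mapsto (1+|t|)^{-(\alpha+\epsilon)} z^{it}$, and verify that each lies in $\Hor^\beta_2$ with norm bounded \emph{uniformly} in the parameter, provided $\alpha \ge \beta$ (resp. $\alpha > \beta$ in the imaginary-powers case, where the logarithmic blow-up of $\|z^{it}\|_{\Hor^\beta_2}$ in $t$ forces the strict inequality and the extra $\epsilon$). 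Since $\{g : \|g\|_{\Hor^\beta_2} \le C\}$ is $R$-bounded, the image family is $R$-bounded. The main obstacle throughout is the first direction's bump-by-bump estimate: making the weights in \ref{W_alpha}, \ref{BIP_alpha} interact correctly with the holomorphic calculus on the range of $\dyad(2^{-n}A)$, and tracking the exact loss of derivatives so that the threshold $\beta > \alpha + \frac12$ is clean; the $R$-bounded bookkeeping on top of that is then a fairly mechanical application of property $(\alpha)$.
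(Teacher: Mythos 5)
Your overall architecture --- use the $\HI$ calculus and property $(\alpha)$ only to glue dyadic pieces, estimate each piece through a representation formula from \eqref{equ-inversion-formulas}, and prove the converse by computing H\"ormander norms of the explicit functions --- does match the paper's (the gluing step is Theorem \ref{Thm Localization Principle}, the converse is Lemma \ref{Lem Mihlin norms of functions}). But there is a genuine error in where you pay the half derivative in part (1). You locate the loss in ``summing over $n$'' via ``Cauchy--Schwarz over the dyadic index $n$''. That step does not exist: $\|f\|_{\Hor^\beta_2}$ is a \emph{supremum} of the $W^\beta_2$-norms of the rescaled dyadic pieces, not an $\ell^2$-sum, so there is nothing over $n$ to pay Cauchy--Schwarz against; and in fact the dyadic summation is lossless --- that is precisely the content of the Localization Principle, which upgrades a uniform single-scale $R$-bound of order $\beta$ to an $R$-bounded $\Hor^\beta_2$ calculus of the \emph{same} order $\beta$. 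The $\frac12$ must be paid at a single scale, in the transform variable: e.g.\ writing $f(2^nA)=\frac{1}{2\pi}\int_\R\hat f(t)e^{it2^nA}\,dt$ one pairs $\hat f(t)\langle t\rangle^{\beta}\in L^2(dt)$ (this is the $W^\beta_2$-norm of the block) against $\langle t\rangle^{-(\beta-\alpha)}$ times the hypothesized $R$-bounded family, and $\langle t\rangle^{-(\beta-\alpha)}\in L^2(\R)$ is exactly the condition $\beta>\alpha+\frac12$. As written, your single-scale step claims no loss and your $n$-sum cannot recover it.

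A second, related gap: ``$R$-boundedness is stable under the integral averages'' is only automatic for $L^1$-normalized densities (absolutely convex hulls). To run the $L^2$-pairing above at the level of $R$-bounds one needs either Proposition \ref{Prop Hytonen Veraar}, which costs $\frac1r>\frac{1}{\type X}-\frac{1}{\cotype X}$ (this is how the $\Hor^\beta_r$ parts of Theorems \ref{Thm Sufficient conditions BIP}--\ref{Thm Sufficient conditions Wave} are proved), or, for the sharp $\frac12$ on a general space with property $(\alpha)$, the square-function route the paper actually takes: one shows that $t\mapsto\langle t\rangle^{-\beta}A^{it}x$, resp.\ $t\mapsto A^{1/2}e^{-te^{i\theta}tA}x$, belongs to $\gamma(\R,X)$ with norm $\lesssim\|x\|$, resp.\ $\lesssim(\frac{\pi}{2}-|\theta|)^{-\alpha-\frac12}\|x\|$, and invokes \cite{LM2} to convert this into $R$-boundedness of the integral means. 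This is also where the $\HI(\Sigma_\omega)$ calculus with $\omega<\frac{\pi}{2}$ is used beyond gluing (through $\|A^{1/2}e^{-tA}x\|_{\gamma(\R_+,X)}\lesssim\|x\|$), and the wave case is reduced to the semigroup case by the factorization in Proposition \ref{Prop N_T N_W almost equivalent} rather than by absorbing the operator weight $(1+|s|A)^{-\alpha}$ block by block. Your part (2) is essentially the paper's argument; the only inaccuracy is that the strict inequality $\alpha>\beta$ for imaginary powers comes from the $\epsilon$-loss in the derivative index in $\|\lambda^{it}\|_{\Hor^{\alpha-\epsilon}_p}\lesssim\langle t\rangle^{\alpha}$, not from a logarithmic blow-up in $t$.
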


Here we used for $\alpha > \frac1p$ the notation
\[ \Hor^\alpha_p = \{ f \in L^p_{loc}(\R_+) :\: \|f\|_{\Hor^\alpha_p} = \sup_{t > 0} \| \phi f(t \cdot) \|_{W^\alpha_p(\R)} < \infty \} , \]
where $\phi$ is a non-zero $C^\infty_c(0,\infty)$ function (different choices resulting in equivalent norms) and $W^\alpha_p(\R)$ stands for the usual Sobolev space.

Some variants of this theorem concerning $\Hor^\beta_p$ calculi with $p \not= 2$ and ``dyadic'' bounds will be discussed in Sections \ref{Sec 6 Hormander} and \ref{sec-Semigroup}.
Essentially, these theorems say that in the presence of a bounded $\HI$-calculus, $\Hor^\beta_2$ spectral multiplier theorems follow if one strengthens the norm bounds in \ref{S_alpha}, \ref{W_alpha}, \ref{BIP_alpha} to $R$-bounds.
We also show that the norm bounds for \ref{S_alpha} or \ref{W_alpha} by themselves are not strong enough to ensure spectral multiplier theorems (see Subsection \ref{subsec-Kalton}).
For \ref{BIP_alpha} we have a positive result in Theorem \ref{Thm Sufficient conditions BIP}.
For functional calculi derived from the norm bounds in \ref{S_alpha}, see \cite{GaPy}.
Due to the generality of our approach we do not always obtain the best possible exponent $\beta$ for a given operator $A$ with additional structure.
However in our general setting our assumptions, in particular the gap between the parameter $\alpha$ in \ref{S_alpha}, \ref{W_alpha} or \ref{BIP_alpha} to the order $\beta$ of the H\"ormander calculus are close to being optimal; we discuss this in Section \ref{Sec Examples Counterexamples}.
Moreover, in the case of Bochner-Riesz means we obtain a characterization of the $\Hor^\alpha_1$ calculus in terms of $R$-bounds as follows.

\begin{thm}
\label{thm-intro-BR}
Let $A$ be as in Theorem \ref{thm-intro-sufficient} above.
\begin{enumerate}
\item If $A$ has an $R$-bounded $\Hor^\beta_1$ calculus, then 
\begin{equation}
\label{equ-intro-BR}
\{ (1 - A/u)^{\alpha-1}_+ :\: u > 0 \}\text{ is }R\text{-bounded}
\end{equation}
for all $1 < \beta < \alpha.$
\item Conversely, \eqref{equ-intro-BR} implies that $A$ has an $R$-bounded $\Hor^\alpha_1$ functional calculus.
\end{enumerate}
\end{thm}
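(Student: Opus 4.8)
The plan is to prove the two implications of Theorem~\ref{thm-intro-BR} separately, in each case reducing to the representation formula for $f(A)$ in terms of Bochner--Riesz means, which is the third line of \eqref{equ-inversion-formulas}.

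\textbf{Necessity (1).} Suppose $A$ has an $R$-bounded $\Hor^\beta_1$ calculus for some $1 < \beta$, and fix $\beta < \alpha$. The idea is that the family $u \mapsto (1 - \cdot/u)^{\alpha-1}_+$, after the standard dilation normalisation, is a \emph{bounded} subset of $\Hor^\beta_1$: indeed $g(t) = (1-t)^{\alpha-1}_+$ is supported in $[0,1]$, has $\alpha - 1 > \beta - 1$ derivatives controlled near the endpoint $t=1$, and hence $\| \phi\, g(s\,\cdot) \|_{W^\beta_1(\R)}$ is bounded uniformly in $s > 0$ (the dilation parameter being absorbed into the supremum defining the $\Hor^\beta_1$ norm). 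Here one uses that a compactly supported function with a Hölder-type singularity of order $\alpha - 1$ at a point lies in $W^\beta_1$ for every $\beta < \alpha$; since $p = 1$ this is a genuine integrability statement at the endpoint and is the place where the strict inequality $\beta < \alpha$ enters. Once the family $\{\, u\, \|g\|_{\Hor^\beta_1}^{-1} (\cdot) : \text{rescaled}\,\}$ is seen to sit inside a ball of $\Hor^\beta_1$, the assumed $R$-bounded $\Hor^\beta_1$ calculus immediately gives that $\{ (1 - A/u)^{\alpha-1}_+ : u > 0 \}$ is $R$-bounded, proving \eqref{equ-intro-BR}. This direction should be essentially formal once the Sobolev-norm estimate for $g$ is in place.

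\textbf{Sufficiency (2).} This is the substantial half. Assume \eqref{equ-intro-BR}: $R\{(1 - A/u)^{\alpha-1}_+ : u>0\} < \infty$. We must show that $\{ f(A) : \|f\|_{\Hor^\alpha_1} \le 1 \}$ is $R$-bounded. The strategy is: (i) by the bounded $\HI(\Sigma_\omega)$ calculus and the localisation technique of Section~\ref{Sec Hor Mih classes}, reduce to functions $f$ supported in a fixed dyadic annulus, say $f = f\phi(2^{-n}\cdot)$, with $R$-sums over $n \in \Z$ controlled by property $(\alpha)$ (this is the ``Paley--Littlewood replacement'' advertised in the introduction, and lets us work with a single localised piece uniformly in the dilation); (ii) for such a localised $f$ write $f(A)$ via the fractional-derivative inversion formula $f(A) = \frac{(-1)^{\lfloor\alpha\rfloor}}{\Gamma(\alpha)} \int_0^\infty f^{(\alpha)}(u)\, (u-A)^{\alpha-1}_+\, u\, du$ (with $\alpha$ now allowed to be non-integer, the fractional derivative interpreted in the Weyl/Riemann--Liouville sense); (iii) regard this as an $R$-bounded average: since $\{(u-A)^{\alpha-1}_+ : u>0\}$ is $R$-bounded, an integral $\int K(u) (u-A)^{\alpha-1}_+\,du$ against a scalar kernel $K$ is $R$-bounded by a constant times $\int |K(u)|\,du$, by the standard fact that integrals over $R$-bounded families against $L^1$ weights remain $R$-bounded (Kahane's contraction principle plus a vector-valued dominated convergence / averaging argument). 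Thus the operator norm — and more importantly the $R$-bound of the family over $\|f\|_{\Hor^\alpha_1}\le 1$ — is dominated by $\sup \int_0^\infty |f^{(\alpha)}(u)|\, u\, du$, which for $f$ localised in a dyadic annulus is comparable to $\|f\|_{W^\alpha_1}$ after rescaling, hence to $\|f\|_{\Hor^\alpha_1}$. Reassembling the dyadic pieces via property $(\alpha)$ and the $\HI$ calculus yields the $R$-bounded $\Hor^\alpha_1$ calculus.

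\textbf{Main obstacle.} The delicate point is step (iii) combined with the endpoint nature of $p=1$: one needs the fractional integration-by-parts identity $f(A) = c_\alpha \int f^{(\alpha)}(u)(u-A)^{\alpha-1}_+ u\,du$ to hold rigorously for all $f$ in (a dense subset of) the unit ball of $\Hor^\alpha_1$ with non-integer $\alpha$, with the right-hand side converging as an $R$-bounded (not merely norm-convergent) vector-valued integral, and with the passage to the $R$-bound uniform over the whole ball. Making the averaging argument produce an \emph{$R$-bound} rather than just a norm bound — i.e.\ controlling $R\{ \int f^{(\alpha)}(u)(u-A)^{\alpha-1}_+ u\,du : \|f\|_{\Hor^\alpha_1}\le 1\}$ — requires that the $L^1$ mass $\int |f^{(\alpha)}(u)| u\,du$ is uniformly bounded on the ball, which is exactly why the scale is $\Hor^\alpha_1$ and not $\Hor^\alpha_p$ for $p>1$: only the $L^1$ Sobolev norm directly controls the total variation weight appearing in the Bochner--Riesz inversion. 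Verifying these convergence and uniformity issues, and handling the boundary behaviour of $f^{(\alpha)}$ at the left endpoint of its support where $(u-A)^{\alpha-1}_+$ degenerates, is where the real work lies; the $\HI$-calculus localisation and the property $(\alpha)$ reassembly are by now standard tools that we invoke from the earlier sections.
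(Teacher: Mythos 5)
Your proposal is correct and follows essentially the same route as the paper: part (1) is exactly the observation that $\sup_{u>0}\|(1-\cdot/u)^{\alpha-1}_+\|_{\Hor^\beta_1}<\infty$ for $\beta<\alpha$ (Lemma \ref{Lem Mihlin norms of functions}(4), resting on $(1-\lambda^2)^{\alpha-1}_+\in W^\beta_1 \Leftrightarrow \beta<\alpha$), and part (2) is the paper's combination of the localization principle (Theorem \ref{Thm Localization Principle}) with the Bochner--Riesz representation formula of Lemma \ref{Lem Sobolev calculus}(4) and the $L^1$-averaging of the $R$-bounded family $\{R^{\alpha-1}_u(A)\}$, with the uniform bound $\int_0^2|f^{(\alpha)}(u)|u^{\alpha-1}\,du\lesssim\|f\|_{W^\alpha_1}$ for $f$ supported in $[\frac12,2]$. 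The rigor issues you flag are handled in the paper by the auxiliary $\Hor^\beta_1$ calculus hypothesis, which makes the inversion formula a genuine Bochner integral in $\Hor^\beta_1$ on the calculus core.
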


Our framework also allows us to work out a rather general version of the Paley-Littlewood theory for operators with a $\Hor^\alpha_p$ calculus, including estimates of the form (for $X = L^q(U)$):
\begin{align}
& \left\| \left( \sum_{n \in \Z} \left| \phi(2^n A) x \right|^2 \right)^{\frac12} \right\| \cong \|x\|_{L^q}, \label{equ-square-function-1} \\
& \left\| \left( \int_0^\infty |\phi(tA) x|^2 \frac{dt}{t} \right)^{\frac12} \right\| \cong \|x\|_{L^q}, \label{equ-square-function-2}
\end{align}
and their analogues in Banach spaces, see \cite{KrW1}.
For precise characterizations of the exponents of the H\"ormander calculus in terms of $R$-bounds and square functions of the form \eqref{equ-square-function-1}, \eqref{equ-square-function-2}, also in Banach spaces, see \cite{KrW2,Kr2}.

We end this introduction with an overview of the article.
In Section \ref{Sec Prelims Rad} we recall the definitions of $R$-boundedness and its variants and give some abstract results used in the main part of the paper.
Section \ref{Sec Hor Mih classes} contains the background on the holomorphic functional calculus as well as several function spaces related to \eqref{equ-intro-hor}.
That is, we introduce the Sobolev and H\"ormander function spaces $\Sea$ and $\Ha$ as well as the associated functional calculi, and show some simple properties used in this paper.
In Section \ref{sec-extended-Hoermander-calculus}, we introduce and study an auxiliary H\"ormander calculus.
It allows to define $f(A)$ for $f$ locally belonging to $\Ha$ in a precised sense, under several possible weak assumptions.
This is useful if there is no a priori self-adjoint calculus at hand, i.e. the underlying Banach space is not an $L^p$ space, on which $f(A)$ would be extended by density of $L^2 \cap L^p.$
The issue of obtaining estimates of $f(A)$ for $f \in \Hor^\alpha_p$ from estimates for $C^\infty_c(0,\infty)$ functions is addressed by means of a localization procedure of the support of $f.$
Namely in Section \ref{Sec 5 Mihlin}, we show that under the presence of an $\HI$ calculus, an $R$-bounded $\Sea$ calculus extends automatically to a $\Ha$ calculus (see Theorem \ref{Thm Localization Principle}). 
In Sections \ref{Sec 6 Hormander} and \ref{sec-Semigroup}, we discuss the full $\Ha$ calculus and prove in particular the above Theorem \ref{thm-intro-sufficient}.
In Section \ref{Sec Examples Counterexamples}, we discuss to what extent
the assumptions of Theorem \ref{thm-intro-sufficient} are optimal
and give several examples and counterexamples in connection with Theorem \ref{thm-intro-sufficient}.
In Section \ref{Sec Bochner-Riesz}, we prove Theorem \ref{thm-intro-BR}.
Finally, in Section \ref{Sec Strip-type Operators}, we show how the results from Sections \ref{Sec 6 Hormander}, \ref{sec-Semigroup} and \ref{Sec Bochner-Riesz} can be transferred to bisectorial operators.
We also look at strip-type operators, which generate polynomially bounded groups and give a sketch of their theory of H\"ormander type functional calculus using the results from the preceding sections.

\section{$R$-bounded sets of operators}\label{Sec Prelims Rad}

A classical theorem of Marcinkiewicz and Zygmund states that for elements $x_1,\ldots,x_n \in L^p(U,\mu)$ we can express ``square sums'' in terms of random sums
\[ \left\| \left( \sum_{j=1}^n |x_j(\cdot)|^2 \right)^{\frac12} \right\|_{L^p(U)}
\cong \left( \E \| \sum_{j=1}^n \epsilon_j x_j \|_{L^p(U)}^q \right)^{\frac1q}
\cong \left( \E \| \sum_{j=1}^n \gamma_j x_j \|_{L^p(U)}^q \right)^{\frac1q} \]
with constants only depending on $p,q \in [1,\infty).$
Here $(\epsilon_j)_j$ is a sequence of independent Bernoulli random variables (with $P(\epsilon_j = 1) = P(\epsilon_j = -1) =\frac12$) and $(\gamma_j)_j$ is a sequence of independent standard Gaussian random variables.
Following \cite{Bou} it has become standard by now to replace square functions in the theory of Banach space valued function spaces by such random sums (see e.g. \cite{KuWe}).
Note however that Bernoulli sums and Gaussian sums for $x_1,\ldots,x_n$ in a Banach space $X$ are only equivalent if $X$ has finite cotype (see \cite[p.~218]{DiJT} for details).

\begin{defi}
Let $\tau$ be a subset of $B(X,Y),$ where $X$ and $Y$ are Banach spaces.
We say that $\tau$ is $R$-bounded if there exists a $C < \infty$ such that
\[ \E \bignorm{ \sum_{k=1}^n \epsilon_k T_k x_k } \leq C \E \bignorm{ \sum_{k=1}^n \epsilon_k x_k } \]
for any $n \in \N,$ $T_1,\ldots, T_n \in \tau$ and $x_1,\ldots,x_n \in X.$
The smallest admissible constant $C$ is denoted by $R(\tau).$
\end{defi}

Recall that by definition, $X$ has Pisier's property $(\alpha)$ if for any finite family $x_{k,l}$ in $X,$ $(k,l) \in F,$ where $F \subset \Z \times \Z$ is a finite array,
we have a uniform equivalence
\[ \E_\omega \E_{\omega'} \bignorm{ \sum_{(k,l) \in F} \epsilon_k(\omega) \epsilon_l(\omega') x_{k,l} }_{X} \cong \E_\omega \bignorm{ \sum_{(k,l) \in F} \epsilon_{k,l}(\omega) x_{k,l} }_{X}. \]
Note that property $(\alpha)$ is inherited by closed subspaces, and that an $L^p$ space has property $(\alpha)$ provided $1 \leq p < \infty$ \cite[Section 4]{KuWe}.

Recall that by definition, $X$ has type $p \in [1,2]$ (resp. cotype $q \in [2,\infty]$) if there is a uniform estimate for any finite family $x_1,\ldots,x_n$ in $X$
\[ \E \bignorm{ \sum_{k} \epsilon_k x_k }_{X} \lesssim \left( \sum_k \|x_k\|^p \right)^{\frac1p}
 \text{ resp. } \left( \sum_k \|x_k\|^q \right)^{\frac1q} \lesssim \E\bignorm{ \sum_k \epsilon_k x_k }_{X} \]
(standard modification if $q = \infty$).
An $L^p$ space for $1 \leq p < \infty$ has always type $\min(2,p)$ and cotype $\max(2,p)$ \cite[p.~219]{DiJT}.

\begin{defi}
Let $\tau \subset B(X,Y).$
Then $\tau$ is called semi-$R$-bounded if there exists a $C < \infty$ such that
\[ \left( \E \left\| \sum_{k=1}^n \epsilon_k T_k a_k x \right\|^2 \right)^{\frac12} \leq C \left( \sum_{k=1}^n |a_k|^2 \right)^{\frac12} \|x\| \]
for any $n \in \N,\:T_1,\ldots,T_n \in \tau,\:a_1,\ldots,a_n \in \C$ and $x \in X.$
The least admissible constant $C$ is denoted by $R_s(\tau).$
One clearly has that any $R$-bounded set is semi-$R$-bounded and $R_s(\tau) \leq R(\tau).$
\end{defi}

We have the following characterization of semi-$R$-boundedness in terms of $R$-boundedness.

\begin{lem}\cite[Lemma 2.1]{VeWe}\label{Lem semi-R-bounded}
A set $\tau \subset B(X,Y)$ is semi-$R$-bounded with $R_s(\tau) \leq M$ if and only if for all $x \in X,$ the set $\tau_x = \{ Tx \in B(\C,X) :\: T \in \tau \}$ is $R$-bounded with $R(\tau_x) \leq M \|x\|.$
\end{lem}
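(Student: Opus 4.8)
The plan is to treat this as a straightforward unwinding of the two definitions, the only nontrivial feature being that the operators $Tx$ occurring in $\tau_x$ have one-dimensional domain $\C$. First I would record two elementary observations. Any bounded operator $S\colon\C\to Y$ is of the form $S(a)=a\,S(1)$, so that $\norm{S}_{B(\C,Y)}=\norm{S(1)}_Y$; consequently the set $\tau_x$ is, up to this canonical identification, just $\{Tx:T\in\tau\}\subset Y$, and plugging scalars $a_k\in\C$ into the operators $T_kx$ produces exactly the vectors $T_k a_k x = a_k\,T_kx$. Secondly, for scalars $a_1,\dots,a_n\in\C$ one has the exact identity $\bigl(\E\bignorm{\sum_k\epsilon_k a_k}_{\C}^2\bigr)^{1/2}=\bigl(\sum_k|a_k|^2\bigr)^{1/2}$, by orthonormality of the Rademacher variables in $L^2$. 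So, after normalising the $R$-bound with second moments (consistently with the definition of $R_s$), the right-hand side of the $R$-boundedness inequality for $\tau_x$ collapses to the Euclidean expression appearing in the definition of semi-$R$-boundedness, and the two conditions become literally the same inequality, quantified over $x$.

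Concretely, for ``$\Leftarrow$'' I would fix $x\in X$, $T_1,\dots,T_n\in\tau$ and $a_1,\dots,a_n\in\C$, and apply the $R$-boundedness of $\tau_x$ to the operators $T_1x,\dots,T_nx\in B(\C,Y)$ with inputs $a_1,\dots,a_n$:
\[ \Bigl(\E\bignorm{\textstyle\sum_{k=1}^n\epsilon_k\,T_k a_k x}^2\Bigr)^{1/2} \le R(\tau_x)\Bigl(\textstyle\sum_{k=1}^n|a_k|^2\Bigr)^{1/2} \le M\Bigl(\textstyle\sum_{k=1}^n|a_k|^2\Bigr)^{1/2}\norm{x}, \]
which is precisely the defining estimate for semi-$R$-boundedness, so $R_s(\tau)\le M$. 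For ``$\Rightarrow$'' I would fix $x$ and, for $T_1,\dots,T_n\in\tau$ and $a_1,\dots,a_n\in\C$, read the semi-$R$-bound estimate together with the scalar identity above as
\[ \Bigl(\E\bignorm{\textstyle\sum_{k=1}^n\epsilon_k(T_kx)(a_k)}^2\Bigr)^{1/2} \le M\norm{x}\Bigl(\textstyle\sum_{k=1}^n|a_k|^2\Bigr)^{1/2} = M\norm{x}\Bigl(\E\bignorm{\textstyle\sum_{k=1}^n\epsilon_k a_k}_{\C}^2\Bigr)^{1/2}, \]
which says exactly that $\tau_x$ is $R$-bounded with $R(\tau_x)\le M\norm{x}$.

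The only point requiring care — and really the only ``obstacle'' — is the matching of constants. The assertion that the same $M$ works on both sides relies on using the $L^2$-normalisation of the $R$-bound; if one instead insists on the first-moment version literally as written in the definition of $R$-boundedness, the equivalence still holds but with the universal Khintchine/Kahane constants relating $\E\bignorm{\sum_k\epsilon_k a_k}_{\C}$ and $\bigl(\sum_k|a_k|^2\bigr)^{1/2}$. I would state this normalisation explicitly and then the proof is complete, since beyond it the lemma is a verbatim translation of the two definitions once one observes that $\tau_x$ consists of operators whose domain is $\C$.
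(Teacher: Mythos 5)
Your proof is correct; the paper gives no argument of its own for this lemma (it simply cites \cite[Lemma 2.1]{VeWe}), and your direct unwinding of the two definitions is exactly the intended one. The only genuine point --- that the paper's definition of $R$-boundedness uses first moments while semi-$R$-boundedness is stated with second moments and the Euclidean norm, so that the identity $\bigl(\E|\sum_k\epsilon_k a_k|^2\bigr)^{1/2}=\bigl(\sum_k|a_k|^2\bigr)^{1/2}$ gives the constant $M$ exactly only under the $L^2$-normalisation, and otherwise up to a universal Kahane constant --- is one you identify and handle explicitly, so nothing is missing.
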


\begin{rem}
\label{rem-semi-R-bounded}
For a family $\tau \subset B(X),$ one always has $R(\tau) \geq \sup_{T \in \tau} \|T\|$ and equivalence holds if and only if $X$ is isomorphic to Hilbert space.
On the other hand, if $X$ or $X'$ has type $2,$ then any bounded family $\tau$ is automatically semi-$R$-bounded,
which motivates the above definition.
Indeed, this follows from \cite[Proposition 2.2]{VeWe} and a dualization argument for the case that $X'$ has type $2.$
Note that if $\tau \subset B(X,Y)$ is semi-$R$-bounded, then $\{T' :\: T \in \tau \}$ is semi-$R$-bounded in $B(Y',X').$
\end{rem}

An important result is the following proposition due to Hyt\"onen and Veraar which produces $R$-bounded sets by integrating against $L^{r'}(\R)$ functions.

\begin{prop}\cite[Proposition 4.1, Remark 4.2]{HyVe}\label{Prop Hytonen Veraar}
\begin{enumerate}
\item Let $X,Y$ be Banach spaces and $(\Omega,\mu)$ a $\sigma$-finite measure space.
Let $r \in [1,\infty)$ satisfy $\displaystyle \frac{1}{r} > \frac{1}{\type Y} - \frac{1}{\cotype X}.$
Further let $T \in L^r(\Omega,B(X,Y))$ or assume merely that $\omega \in \Omega \mapsto N(t) \in B(X,Y)$ is strongly measurable with $\|T(\cdot)\|_{B(X,Y)}$ dominated by a function in $L^r(\Omega).$
Denote $r'$ the conjugated exponent to $r.$

Then the set
\[ \tau = \{ T_f :\: \|f\|_{L^{r'}(\Omega)} \leq 1 \} \]
is $R$-bounded, where $T_f x = \int_\Omega f(\omega) T(\omega)x d\omega$ and $R(\tau) \lesssim \|T\|_{L^r(\Omega,B(X,Y))}.$
\item Let $Y$ have type $p$ and let $r \in [1,\infty)$ satisfy $\frac1r > \frac1p - \frac12.$
If $\omega \in \Omega \mapsto N(t) \in B(X,Y)$ is strongly measurable and there exists $C < \infty$ such that
$\left(\int_\Omega \|N(t) x\|^r dt\right)^{\frac1r} \leq C \|x\|,$ then the set
\[ \tau = \{ T_f :\: \|f\|_{L^{r'}(\Omega)} \leq 1 \} \]
is semi-$R$-bounded and $R_s(\tau) \lesssim C.$
\end{enumerate}
\end{prop}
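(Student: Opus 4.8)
The plan is to reduce the vector-valued statement to a scalar $L^r$ bound that we can feed into the real interpolation / Kahane-type machinery. First I would recall the defining inequality one wants for $R$-boundedness (resp. semi-$R$-boundedness) of the family $\tau = \{T_f : \|f\|_{L^{r'}(\Omega)} \leq 1\}$, where $T_f x = \int_\Omega f(\omega) T(\omega) x \, d\omega$. Fix $n \in \N$, operators $T_{f_1}, \dots, T_{f_n} \in \tau$ and vectors $x_1,\dots,x_n \in X$; by expanding each $T_{f_k}$ we must estimate $\E \bignorm{\sum_{k=1}^n \epsilon_k \int_\Omega f_k(\omega) T(\omega) x_k \, d\omega}_Y$. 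The idea is to pull the Rademacher sum inside the integral and write this as $\E \bignorm{\int_\Omega T(\omega)\bigl(\sum_k \epsilon_k f_k(\omega) x_k\bigr) d\omega}_Y$, then move the $Y$-norm inside via Minkowski, and split off the operator norm of $T(\omega)$: the quantity is bounded by $\int_\Omega \|T(\omega)\|_{B(X,Y)} \, \E\bignorm{\sum_k \epsilon_k f_k(\omega) x_k}_Y \, d\omega$ — but this naive route loses too much, so instead one should keep $T(\omega)$ acting and use the type/cotype hypothesis more carefully.

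Second, the heart of the argument is a randomization trick. For each fixed $\omega$, the vector $y(\omega) := \sum_k \epsilon_k f_k(\omega) x_k$ depends on the Rademacher variables; the point of the hypothesis $\frac1r > \frac1{\type Y} - \frac1{\cotype X}$ is that it is exactly the condition under which the operator $\omega \mapsto T(\omega)$, viewed through a Hölder pairing with $L^{r'}$, maps "random $L^2$-type" inputs to "random $L^2$-type" outputs. Concretely I would introduce a second independent Rademacher (or Gaussian) sequence and use the equivalence of Gaussian and Rademacher averages where permitted by cotype of $X$ (to handle the input side) and type of $Y$ (to handle the output side); then the scalar integrability in $\omega$ from $T \in L^r(\Omega, B(X,Y))$ combines with Hölder's inequality against $\|f_k\|_{L^{r'}}$ and a Kahane-contraction-principle step to close the estimate, yielding $R(\tau) \lesssim \|T\|_{L^r(\Omega,B(X,Y))}$. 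This is precisely the Hytönen–Veraar argument, so I would cite \cite{HyVe} for the technical core and only reproduce the skeleton.

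For part (2), the scheme is the same but easier: we only need to control $\bigl(\E\bignorm{\sum_k \epsilon_k T_{f_k} a_k x}_Y^2\bigr)^{1/2}$ for scalars $a_k$ and a single fixed $x$, so the "input randomization" collapses — there is no $X$-cotype issue because the vectors $a_k x$ are all scalar multiples of one vector, and one only needs type $p$ of $Y$ to reassemble, which is why the hypothesis weakens to $\frac1r > \frac1p - \frac12$. Here the bound $\bigl(\int_\Omega \|N(\omega)x\|_Y^r \, d\omega\bigr)^{1/r} \leq C\|x\|$ plugs directly into Hölder against $\|f\|_{L^{r'}}$ after the type-$p$ reassembly of the Rademacher sum, giving $R_s(\tau) \lesssim C$.

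The main obstacle I anticipate is the clean separation of the "input" and "output" randomizations so that the type and cotype exponents enter on the correct sides: one must be careful that the contraction principle is applied with the $f_k(\omega)$ as (bounded, after normalization) scalar multipliers \emph{after} integrating, and that Fubini/Minkowski exchanges between $\E$, $\int_\Omega$, and the norms are all justified by the strong measurability and $L^r$-domination hypotheses. Since this is exactly the content of \cite[Proposition 4.1, Remark 4.2]{HyVe}, the cleanest presentation simply states that the proof is given there and sketches the randomization reduction above; I would not attempt to reprove the sharp interpolation estimate from scratch.
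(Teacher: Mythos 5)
Your proposal is correct and follows essentially the same route as the paper: part (1) is simply delegated to \cite[Proposition 4.1, Remark 4.2]{HyVe}, and part (2) rests on the observation that the input vectors are all scalar multiples of a single $x$, so the cotype obstruction on the domain side disappears and only the type $p$ of $Y$ enters, giving the weakened condition $\frac1r > \frac1p - \frac12$. The paper packages this last step slightly more economically — it applies part (1) to the rank-one family $\omega \mapsto N(\omega)x \in B(\C,Y)$ (using that $\C$ has cotype $2$) and then invokes Lemma \ref{Lem semi-R-bounded} to convert the resulting $R$-bound on $\{T_f x\} \subset B(\C,Y)$ into semi-$R$-boundedness of $\tau$ in $B(X,Y)$ — which you could adopt to avoid re-running the randomization argument.
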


\begin{proof}
Part (1) is proved in \cite[Proposition 4.1, Remark 4.2]{HyVe}.
Then part (2) follows from part (1) and Lemma \ref{Lem semi-R-bounded} in the following way.
$\C$ has cotype $2,$ $Y$ has type $p,$ and by assumption, $\frac1r > \frac1p - \frac12.$
Then part (1) yields that $R(\{T_f x :\: \|f\|_{L^{r'}(\Omega)} \leq 1 \}) \leq M \|x\|$ in $B(\C,Y).$
Thus by Lemma \ref{Lem semi-R-bounded}, $R_s(\{T_f:\:\|f\|_{L^{r'}(\Omega)} \leq 1 \}) \leq M$ in $B(X,Y).$
\end{proof}

To obtain $R$-bounds, it sometimes suffices to have simple norm bounds for an analytic operator family, which autoimproves to an $R$-bounded version.
This is precised in the following lemma.

\begin{lem}\label{Lem norm to R bound analytic}
Assume that $X$ has type $p$ and cotype $q.$
Let $F : \C_+ \to B(X)$ be an analytic function such that $\|F(z)\| \leq C \left( \frac{|z|}{\Re z} \right)^\alpha$ for any $\Re z > 0$ and some $\alpha \geq 0.$
Then for $\delta > \frac1p - \frac1q$ there is a constant $C < \infty$ such that we have
\[ \left\{ \left( \frac{\Re z}{|z|} \right)^{\alpha + \delta} F(z) :\: \Re z = \epsilon \right\} \]
is $R$-bounded for any $\epsilon > 0,$ with $R$-bound less than $C.$
\end{lem}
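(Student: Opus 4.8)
The plan is to deduce the $R$-bound from Proposition~\ref{Prop Hytonen Veraar}(1), applied with $Y=X$: that proposition turns a single $L^r$-integrable operator-valued function into an $R$-bounded family of integrated operators $\{T_f:\|f\|_{L^{r'}}\le1\}$. So the goal is to write each operator $\left(\tfrac{\Re z}{|z|}\right)^{\alpha+\delta}F(z)$ with $\Re z=\epsilon$ in the form $\int_\Omega f(\omega)\,T(\omega)\,d\omega$ for \emph{one} fixed $T\in L^r(\Omega,B(X))$ and scalar functions $f$ ranging over a bounded subset of $L^{r'}(\Omega)$. First I would scale: replacing $F$ by $z\mapsto F(\epsilon z)$ preserves the growth hypothesis and carries the set at level $\Re z=\epsilon$ onto the set at level $\Re z=1$, so it suffices to treat $\epsilon=1$ and to obtain a bound independent of $\epsilon$. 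Then I fix $r\in(1,\infty)$ with $\frac1p-\frac1q<\frac1r<\delta$ (possible by the hypothesis on $\delta$; then $\frac1r>\frac{1}{\type X}-\frac{1}{\cotype X}$, which is what the proposition requires), a large integer $N>\alpha$, and a small $\eta>0$ with $\frac1r+2\eta<\delta$.

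The core step is a Cauchy representation of $F$ on the line $\Re z=1$ through its values on the line $\Re w=\tfrac12$. For $z$ with $\Re z=1$ let $w^\ast=1-\overline z$ be the reflection of $z$ across $\{\Re w=\tfrac12\}$; the point of this choice is that $z-w^\ast=2\Re z-1=1$ stays bounded independently of $\Im z$, and that $|w-z|=|w-w^\ast|=\bigl(\tfrac14+(t-\Im z)^2\bigr)^{1/2}$ for $w=\tfrac12+it$. Applying Cauchy's formula on the truncated regions $\{\Re w>\tfrac12\}\cap\{|w-z|<R\}$ with the order-$N$ kernel $\dfrac{(z-w^\ast)^N}{(w-z)(w-w^\ast)^N}$, which differs from $\dfrac{1}{w-z}$ by a function holomorphic on $\{\Re w>\tfrac12\}$, and letting $R\to\infty$ — the circular-arc contribution being $O(R^{\alpha-N})\to0$ by the growth bound — I obtain
\[
F(z)=\frac{1}{2\pi i}\int_{\Re w=1/2}\frac{F(w)}{(w-z)(w-w^\ast)^N}\,dw,\qquad \Re z=1,
\]
the integral converging absolutely since $\|F(\tfrac12+it)\|\lesssim(1+|t|)^\alpha$ while the kernel is $\asymp(1+|t-\Im z|)^{-(N+1)}$ in modulus.

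Next I would feed this into the proposition. Set $T(t):=(1+t^2)^{-\sigma}F(\tfrac12+it)$ with $\sigma:=\tfrac{\alpha}{2}+\tfrac{1}{2r}+\eta$, so that $\|T(t)\|\lesssim(1+|t|)^{-1/r-2\eta}$ and hence $\|T(\cdot)\|$ is dominated by an $L^r(\R)$-function. Multiplying the representation by $\left(\tfrac{\Re z}{|z|}\right)^{\alpha+\delta}=(1+(\Im z)^2)^{-(\alpha+\delta)/2}$ and absorbing the weight $(1+t^2)^\sigma$ into the scalar factor writes
\[
\left(\tfrac{\Re z}{|z|}\right)^{\alpha+\delta}F(z)=\int_\R f_z(t)\,T(t)\,dt,\qquad
|f_z(t)|\lesssim(1+|\Im z|)^{-(\alpha+\delta)}(1+|t|)^{\alpha+\frac1r+2\eta}(1+|t-\Im z|)^{-(N+1)} .
\]
By Peetre's inequality and the largeness of $N$ one gets $\|f_z\|_{L^{r'}(\R)}\lesssim(1+|\Im z|)^{\frac1r+2\eta-\delta}\lesssim1$ uniformly in $z$. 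Thus $\{(\tfrac{\Re z}{|z|})^{\alpha+\delta}F(z):\Re z=1\}$ lies in a fixed multiple of $\{T_f:\|f\|_{L^{r'}}\le1\}$, which by Proposition~\ref{Prop Hytonen Veraar}(1) is $R$-bounded with $R$-bound $\lesssim\|T\|_{L^r(\R,B(X))}$, a finite constant depending only on $\alpha,\delta,p,q$ and the constant in the hypothesis; by the scaling reduction the same constant handles every $\epsilon>0$.

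I expect the main obstacle to be the weight bookkeeping that makes $T\in L^r$ and $\{f_z\}$ bounded in $L^{r'}$ hold simultaneously: it is exactly this balance that forces the reflection point $w^\ast$ (keeping $|z-w^\ast|$ bounded in $\Im z$) and that consumes precisely the admissible loss $\delta>\frac1p-\frac1q$, matching the threshold $\frac1r>\frac{1}{\type X}-\frac{1}{\cotype X}$ in Proposition~\ref{Prop Hytonen Veraar}. The holomorphic Cauchy representation for an operator-valued function of polynomial growth is then routine, provided one argues on the truncated half-discs $\{\Re w>\tfrac12,\ |w-z|<R\}$ rather than subtracting the individually divergent lower-order terms.
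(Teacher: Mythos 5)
Your argument is correct and follows essentially the same route as the paper: a Cauchy integral representation of $F$ on one vertical line by its values on a parallel line, a H\"older splitting of the integrand into a scalar factor bounded in $L^{r'}$ and a fixed operator-valued factor in $L^{r}$ with $\frac1p-\frac1q<\frac1r<\delta$, and then Proposition \ref{Prop Hytonen Veraar}(1). The paper achieves the same splitting more directly by writing $F(\lambda)/\lambda^{\alpha+\delta}=\frac{1}{2\pi i}\int_{\Re z=\epsilon}\frac{1}{z-\lambda}\frac{F(z)}{z^{\alpha+\delta}}\,dz$ for $\Re\lambda=2\epsilon$, so that the fractional power $z^{-(\alpha+\delta)}$ supplies the $L^r$ decay and no rescaling, reflected order-$N$ kernel, or Peetre inequality is needed; your extra bookkeeping is sound but buys nothing beyond what the simpler kernel already gives.
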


\begin{proof}
For $\lambda = 2 \epsilon + i s$ we have by the Cauchy integral formula
\[ \frac{F(\lambda)}{\lambda^{\alpha + \delta}} = \frac{1}{2\pi i} \int_{\Re z = \epsilon} \frac{1}{z - \lambda} \frac{F(z)}{z^{\alpha + \delta}} dz. \]
Choose $\delta > \frac1r > \frac1p - \frac1q.$
Then
\[ \left( \int_{\Re z = \epsilon} \left| \frac{1}{z - \lambda} \right|^{r'} dz \right)^{\frac{1}{r'}} = \left( \int_\R \frac{1}{(|t-s|^2 + \epsilon^2)^{r'/2}} dt \right)^{\frac{1}{r'}} = \left( \int_\R \frac{1}{(1 + |t/\epsilon|^2)^{r'/2}} \frac{dt}{\epsilon} \right)^{\frac{1}{r'}} \frac{\epsilon^{\frac{1}{r'}}}{\epsilon}. \]
Furthermore,
\begin{align*} \left( \int_{\Re z = \epsilon} \left\| \frac{F(z)}{z^{\alpha+\delta}} \right\|^r dz \right)^{\frac1r} & \leq \sup_{\Re z = \epsilon} \left\| \frac{F(z)}{z^\alpha} \right\| \left( \int_{\Re z = \epsilon} \frac{1}{|z|^{r \delta}} dz \right)^{\frac1r} \leq C \epsilon^{-\alpha} \left( \int_\R \frac{1}{(\epsilon^2 + |t|^2)^{r \delta/2}} dt \right)^{\frac1r} \\
& = C \epsilon^{-\alpha} \epsilon^{-\delta} \epsilon^{\frac1r} \left( \int_\R \frac{1}{(1+|t|^2)^{r \delta/2}} dt \right)^{\frac1r}.
\end{align*}
Hence,
\[ \left( \int_{\Re z = \epsilon} \left| \frac{1}{z - \lambda} \right|^{r'} dz \right)^{\frac{1}{r'}} \left( \int_{\Re z = \epsilon} \left\| \frac{F(z)}{z^{\alpha + \delta}} \right\|^r dz \right)^{\frac1r} \leq C \frac{\epsilon^{\frac{1}{r'}}}{\epsilon} \epsilon^{-\alpha -\delta} \epsilon^{\frac1r} \cong C ( \Re \lambda )^{-\alpha-\delta}. \]
By Proposition \ref{Prop Hytonen Veraar} and the fact that $\frac1r > \frac1p - \frac1q,$ it follows that $\{ F(z) \left( \frac{\Re z}{|z|} \right)^{\alpha + \delta} :\: \Re z = \epsilon \}$ is $R$-bounded, with a uniform $R$-bound in $\epsilon > 0.$
\end{proof}

As a corollary, we record

\begin{cor}\label{Cor norm to R bound analytic}
Let $X$ be a Banach space with type $p,$ cotype $q$ and let $\frac1r > \frac1p - \frac1q.$
Let $-A$ generate an analytic semigroup $(e^{-zA})_{\Re z > 0}.$
\begin{enumerate}
\item If $\| e^{-zA} \| \leq C \left( \frac{|z|}{\Re z} \right)^\alpha$ for $\Re z > 0,$ then there is a constant $C < \infty$ such that
\[ R\left(\left\{ \left( \frac{\Re z}{|z|} \right)^\beta e^{-zA} :\: \Re z = \epsilon \right\} \right)  \leq C \text{ for }\beta > \alpha + \frac1r\text{ and any }\epsilon > 0. \]
\item If $\|z (z - A)^{-1}\| \leq C \frac{|z|^\alpha}{| \Im z |^\alpha}$ for $\Im z > 0,$ then there is a $C < \infty$ such that
\[R\left(\left\{\left(\frac{|\Im z|}{|z|}\right)^{\beta} z(z-A)^{-1} :\: \Im z = \epsilon \right\} \right) \leq C \text{ for }\beta > \alpha + \frac1r\text{ and any }\epsilon \neq 0.\]
\end{enumerate}
\end{cor}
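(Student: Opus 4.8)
The plan is to obtain both statements as direct consequences of Lemma~\ref{Lem norm to R bound analytic}; the only work is to rewrite each operator family in the form $(\Re z/|z|)^{\alpha+\delta}F(z)$ to which that lemma applies. Note first that for the threshold it suffices to observe that if $\beta > \alpha + \frac1r$ and $\frac1r > \frac1p - \frac1q$, then $\delta := \beta - \alpha$ satisfies $\delta > \frac1r > \frac1p - \frac1q$, which is exactly the hypothesis on $\delta$ in Lemma~\ref{Lem norm to R bound analytic}.

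For part (1), I would simply set $F(z) = e^{-zA}$ for $\Re z > 0$. Since $-A$ generates an analytic semigroup, $F \colon \C_+ \to B(X)$ is analytic, and the hypothesis $\|e^{-zA}\| \leq C(|z|/\Re z)^\alpha$ is precisely the growth bound required in the lemma. With $\delta = \beta - \alpha$ as above, Lemma~\ref{Lem norm to R bound analytic} then yields that $\{(\Re z/|z|)^{\alpha+\delta}e^{-zA} :\: \Re z = \epsilon\} = \{(\Re z/|z|)^{\beta}e^{-zA} :\: \Re z = \epsilon\}$ is $R$-bounded with an $R$-bound independent of $\epsilon > 0$, which is the claim.

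For part (2), I would rotate the half-plane. The map $w \mapsto iw$ sends $\C_+ = \{\Re w > 0\}$ onto the open upper half-plane $\{\Im z > 0\}$, which is contained in $\rho(A)$ because $-A$ generates an analytic semigroup (so $A$ is sectorial of angle $< \frac\pi2$). Hence $G(w) := iw\,(iw - A)^{-1}$ is well defined and analytic on $\C_+$. Writing $z = iw$ one has $|z| = |w|$ and $\Im z = \Re w$, so the assumption $\|z(z-A)^{-1}\| \leq C|z|^\alpha/|\Im z|^\alpha$ becomes $\|G(w)\| \leq C(|w|/\Re w)^\alpha$, again the hypothesis of Lemma~\ref{Lem norm to R bound analytic}. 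Applying that lemma with $\delta = \beta - \alpha$ gives that $\{(\Re w/|w|)^{\beta}G(w) :\: \Re w = \epsilon\}$ is $R$-bounded uniformly in $\epsilon > 0$; translating back through $z = iw$ (using $\Re w/|w| = \Im z/|z| = |\Im z|/|z|$ since $\Im z > 0$) shows that $\{(|\Im z|/|z|)^{\beta}z(z-A)^{-1} :\: \Im z = \epsilon\}$ is $R$-bounded uniformly in $\epsilon > 0$. The case $\epsilon < 0$ follows identically using $w \mapsto -iw$, the lower half-plane being likewise in $\rho(A)$.

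I do not expect a genuine obstacle here, since the corollary is a formal consequence of Lemma~\ref{Lem norm to R bound analytic}. The only two points needing care are: checking that the relevant rotated half-plane lies in the resolvent set, which is immediate from analyticity of the semigroup; and tracking the elementary identities $|iw| = |w|$, $\Im(iw) = \Re w$ so that the growth bound transfers correctly and the resulting $R$-bound is genuinely uniform in $\epsilon$.
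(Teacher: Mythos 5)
Your proposal is correct and is essentially the paper's own argument: part (1) applies Lemma~\ref{Lem norm to R bound analytic} to $F(z)=e^{-zA}$, and part (2) applies it to $F(z)=iz(iz-A)^{-1}$ and $F(z)=-iz(-iz-A)^{-1}$, exactly as you do via the rotations $z=\pm iw$. Your explicit check that $\delta=\beta-\alpha>\frac1r>\frac1p-\frac1q$ and that the rotated half-planes lie in $\rho(A)$ just spells out details the paper leaves implicit.
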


\begin{proof}
For (1), we set $F(z) = \exp(-zA)$ and apply Lemma \ref{Lem norm to R bound analytic}, whereas for (2), we set both $F(z) = iz (iz - A)^{-1}$ and $F(z) = - iz (-iz - A)^{-1}$ and apply Lemma \ref{Lem norm to R bound analytic} twice.
\end{proof}

In the claim of Lemma \ref{Lem norm to R bound analytic}, one cannot replace the vertical axes $\Re z = \epsilon$ by the right half plane $\Re z > 0.$
This follows from the following counterexample.

\begin{exa}
Let $A$ be the negative generator of a bounded analytic semigroup which is not $R$-sectorial (see the beginning of Section \ref{Sec Hor Mih classes} for the definition of $R$-sectoriality), i.e. for some $\delta \in (0,\frac{\pi}{2}),$ $\{\exp(-zA) :\: z \in \Sigma_\delta \}$ is bounded, but for no $\delta \in (0,\frac{\pi}{2})$ is $\{\exp(-zA):\: z \in \Sigma_\delta\}$ $R$-bounded.
For an example of such a semigroup on an $L^p$-space, see \cite[Theorem 6.5]{Fackler} and \cite[2.20 Theorem]{KuWe}.
Put $a = \delta/\frac{\pi}{2} \in (0,1)$ and set $F(z) = \exp(-z^{a}A).$
Then the assumptions of Lemma \ref{Lem norm to R bound analytic} hold with $\alpha = 0.$
If $\left\{ \left( \frac{\Re z}{|z|} \right)^{\beta} F(z) :\: \Re z > 0 \right\}$ were $R$-bounded for some $\beta > 0,$ then $\left\{ F(z):\: z \in \Sigma_{\frac{\pi}{4}} \right\}$ would be $R$-bounded, so $\{\exp(-zA):\: z \in \Sigma_{\delta/2} \}$ would be $R$-bounded.
This is a contradiction.
\end{exa}

The following result of van Gaans will be useful.

\begin{prop}\cite[Theorem 3.1]{vG}\label{Prop van Gaans}
Let $X,Y$ be Banach spaces such that $X$ has cotype $q$ and $Y$ has type $p.$
Let $\tau_1,\tau_2,\ldots$ be $R$-bounded sets in $B(X,Y)$ such that $C = \left(\sum_{k=1}^\infty R(\tau_k)^r \right)^{\frac1r}$ is finite with $\frac1r = \frac1p - \frac1q.$
Then the union $\tau = \bigcup_{k=1}^\infty \tau_k$ is $R$-bounded with $R(\tau) \lesssim C.$
\end{prop}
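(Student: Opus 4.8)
The plan is to reduce the statement to two standard moment inequalities for sums of \emph{independent symmetric} random variables — an ``upper type $p$'' bound in $Y$ and a ``lower cotype $q$'' bound in $X$ — which are then glued together by a single application of Hölder's inequality with the conjugate pair $(r/p,\,q/p)$; note that $p/r + p/q = p(1/r + 1/q) = 1$ is exactly the relation $\frac1r = \frac1p - \frac1q$, so this one Hölder step carries no slack. Kahane--Khintchine will be invoked freely to pass between the $L^1$, $L^p$ and $L^q$ norms of Rademacher sums, which lets us reconcile the fact that $R$-boundedness is phrased with first moments while type and cotype are cleanest with $p$-th and $q$-th moments.

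Concretely, I would fix $T_1,\dots,T_n\in\tau=\bigcup_k\tau_k$ and $x_1,\dots,x_n\in X$ and partition $\{1,\dots,n\}=\bigsqcup_k J_k$ according to which $\tau_k$ contains $T_j$ (choosing one index when $T_j$ lies in several $\tau_k$); only finitely many blocks $J_k$ are nonempty. Setting $S_k=\sum_{j\in J_k}\epsilon_j T_j x_j$ and $Z_k=\sum_{j\in J_k}\epsilon_j x_j$, disjointness of the blocks makes $(S_k)_k$ and $(Z_k)_k$ families of independent symmetric random variables, with $\sum_k S_k=\sum_j\epsilon_j T_j x_j$ in $Y$ and $\sum_k Z_k=\sum_j\epsilon_j x_j$ in $X$.

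The core step is the pair of inequalities: for independent symmetric $Y$-valued $(\xi_k)$ one has $\norm{\sum_k\xi_k}_{L^p(Y)}\lesssim(\sum_k\norm{\xi_k}_{L^p(Y)}^p)^{1/p}$, and for independent symmetric $X$-valued $(\zeta_k)$ one has $(\sum_k\norm{\zeta_k}_{L^q(X)}^q)^{1/q}\lesssim\norm{\sum_k\zeta_k}_{L^q(X)}$. Both follow by re-randomizing — the law of $\sum_k\xi_k$ equals that of $\sum_k\epsilon_k'\xi_k$ with fresh signs — conditioning on the $\xi_k$, applying the Rademacher type-$p$ resp. cotype-$q$ inequality of $Y$ resp. $X$ pointwise (together with Kahane to switch the ambient power to $p$ resp. $q$), and integrating back using $\E_\xi\sum_k\norm{\xi_k}^p=\sum_k\norm{\xi_k}_{L^p(Y)}^p$ and the analogue for $q$. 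Applying the type bound to $(S_k)$, then $R$-boundedness of each $\tau_k$, then Kahane--Khintchine (to compare $\norm{S_k}_{L^p(Y)}$ with $\E\norm{S_k}$ and $\E\norm{Z_k}$ with $\norm{Z_k}_{L^q(X)}$), gives
\[ \Bignorm{\sum_j\epsilon_j T_j x_j}_{L^p(Y)} \lesssim \Bigl(\sum_k\norm{S_k}_{L^p(Y)}^p\Bigr)^{1/p} \lesssim \Bigl(\sum_k R(\tau_k)^p\,\norm{Z_k}_{L^q(X)}^p\Bigr)^{1/p}. \]
Hölder with the conjugate exponents $r/p$ and $q/p$ bounds the right-hand side by $\bigl(\sum_k R(\tau_k)^r\bigr)^{1/r}\bigl(\sum_k\norm{Z_k}_{L^q(X)}^q\bigr)^{1/q}=C\,\bigl(\sum_k\norm{Z_k}_{L^q(X)}^q\bigr)^{1/q}$, and the cotype bound applied to $(Z_k)$ turns this into $\lesssim C\,\norm{\sum_j\epsilon_j x_j}_{L^q(X)}\cong C\,\E\norm{\sum_j\epsilon_j x_j}$; by Kahane the left-hand side is $\cong\E\norm{\sum_j\epsilon_j T_j x_j}$, which is exactly $R$-boundedness of $\tau$ with $R(\tau)\lesssim C$.

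The main obstacle is bookkeeping of exponents rather than any deep estimate: one has to verify that $\frac1r=\frac1p-\frac1q$ is precisely what makes $r/p$ and $q/p$ conjugate (and both $\ge1$, using $p\le2\le q$ and hence $r\ge p$), so that the single Hölder step is lossless, and one should check that the degenerate cases cause no trouble — for $p=1$ the type bound is just the triangle inequality, and for $q=\infty$ or $p=q=2$ the $\ell^r$-sum defining $C$ degenerates into a supremum and the corresponding Hölder step becomes an $\ell^\infty$--$\ell^q$ pairing. The two independent-sum moment inequalities are routine once the symmetrization identity $(\xi_k)_k\stackrel{d}{=}(\epsilon_k'\xi_k)_k$ is in place, and Kahane--Khintchine absorbs all the discrepancies between first, $p$-th and $q$-th moments.
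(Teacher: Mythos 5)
The paper does not prove this proposition at all: it is quoted verbatim from van Gaans \cite[Theorem 3.1]{vG}. Your argument is correct and is essentially van Gaans's own proof — block the Rademacher sum according to the sets $\tau_k$, apply the moment inequalities for independent symmetric sums in a type-$p$ target and a cotype-$q$ source, use the $R$-bound of each $\tau_k$ blockwise via Kahane--Khintchine, and glue with H\"older for the conjugate pair $(r/p,\,q/p)$, which is exactly where the relation $\frac1r=\frac1p-\frac1q$ enters.
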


As a corollary, we have the following further method to pass from bounded sets to (semi-)$R$-bounded ones.

\begin{cor}
Let $\R \ni t \mapsto U(t) \in B(X)$ a (not necessarily strongly continuous) one parameter group on a Banach space $X$ with type $p$ and cotype $q$ and let $X'$ have type $p'.$
Assume that $\{(1+|t|)^{-\alpha} U(t) :\: t \in \R\}$ is bounded for some $\alpha \geq 0.$
\begin{enumerate}
\item Assume that $\{U(t):\: t\in [0,1]\}$ is $R$-bounded.
Then $\{(1+|t|)^{-\beta} U(t) :\: t \in \R \}$ is $R$-bounded for $\beta > \alpha + \frac1p - \frac1q.$
\item Assume that $\{U(t):\: t\in [0,1]\}$ is semi-$R$-bounded.
Then $\{(1+|t|)^{-\beta} U(t):\: t \in \R \}$ is semi-$R$-bounded for $\beta > \alpha + \min(\frac1p,\frac{1}{p'}) - \frac12.$
\end{enumerate}
\end{cor}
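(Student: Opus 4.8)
The plan is to derive both parts from van Gaans' Proposition~\ref{Prop van Gaans} by cutting $\R$ into the unit intervals $[n,n+1)$, $n\in\Z$, and combining the group law with the assumed polynomial norm bound. Throughout I will use the two elementary stability facts: left composition with a fixed bounded operator $S_0$ multiplies an $R$-bound (resp.\ an $R_s$-bound) by $\|S_0\|$, and multiplying a set of operators by scalars of modulus $\le M$ multiplies the $R$-bound by $M$. The degenerate cases $p=q$ (which forces $p=q=2$, so $X$ is Hilbertian and $R$-boundedness reduces to boundedness by Remark~\ref{rem-semi-R-bounded}), resp.\ $p=2$ or $p'=2$ (so $X$ resp.\ $X'$ has type $2$ and boundedness already yields semi-$R$-boundedness by Remark~\ref{rem-semi-R-bounded}), are trivial since $\{(1+|t|)^{-\beta}U(t):t\in\R\}$ is bounded for every $\beta>\alpha$; so below I may assume the exponents in Proposition~\ref{Prop van Gaans} are finite.

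For part (1), write $\{(1+|t|)^{-\beta}U(t):t\in\R\}=\bigcup_{n\in\Z}\tau_n$ with $\tau_n=\{(1+|t|)^{-\beta}U(t):t\in[n,n+1)\}$. For $t\in[n,n+1)$ the group law gives $U(t)=U(n)U(t-n)$ with $t-n\in[0,1)$, and the assumed boundedness of $\{(1+|t|)^{-\alpha}U(t)\}$ yields $\|U(n)\|\lesssim(1+|n|)^{\alpha}$. Hence, composing on the left with $U(n)$ and absorbing the scalar factor,
\[ R(\tau_n)\ \le\ \Bigl(\sup_{t\in[n,n+1)}(1+|t|)^{-\beta}\Bigr)\,\|U(n)\|\,R\bigl(\{U(s):s\in[0,1]\}\bigr)\ \lesssim\ (1+|n|)^{\alpha-\beta}. \]
With $\tfrac1r=\tfrac1p-\tfrac1q$ the series $\sum_{n\in\Z}R(\tau_n)^{r}\lesssim\sum_{n\in\Z}(1+|n|)^{(\alpha-\beta)r}$ converges exactly when $\beta-\alpha>\tfrac1r=\tfrac1p-\tfrac1q$, and Proposition~\ref{Prop van Gaans} then gives the $R$-boundedness of the union.

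For part (2) I would first get the exponent $\beta>\alpha+\tfrac1p-\tfrac12$ by running this scheme fibrewise. Fix $x\in X$ and view $\{(1+|t|)^{-\beta}U(t)x:t\in\R\}$ as a subset of $B(\C,X)$, split into $\bigcup_n\sigma_n$ with $\sigma_n=\{(1+|t|)^{-\beta}U(t)x:t\in[n,n+1)\}$. By Lemma~\ref{Lem semi-R-bounded}, semi-$R$-boundedness of $\{U(s):s\in[0,1]\}$ says precisely that $\{U(s)x:s\in[0,1]\}$ is $R$-bounded in $B(\C,X)$ with bound $\le R_s(\{U(s):s\in[0,1]\})\,\|x\|$; composing with $U(n)$ and inserting the scalar factor as before gives $R(\sigma_n)\lesssim(1+|n|)^{\alpha-\beta}\|x\|$. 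Since $\C$ has cotype $2$, Proposition~\ref{Prop van Gaans} applies in $B(\C,X)$ with $\tfrac1r=\tfrac1p-\tfrac12$, and $\sum_n R(\sigma_n)^r<\infty$ precisely for $\beta>\alpha+\tfrac1p-\tfrac12$. Thus $\{(1+|t|)^{-\beta}U(t)x:t\in\R\}$ is $R$-bounded with bound $\lesssim\|x\|$ for every $x$, and the converse direction of Lemma~\ref{Lem semi-R-bounded} yields semi-$R$-boundedness of $\{(1+|t|)^{-\beta}U(t):t\in\R\}$.

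For the second exponent $\beta>\alpha+\tfrac1{p'}-\tfrac12$ I would apply the previous step to the dual group $V(t):=U(-t)'$ on $X'$: it is a one-parameter group with $\|(1+|t|)^{-\alpha}V(t)\|$ bounded (as $\|V(t)\|=\|U(-t)\|$), and since $\{U(s):s\in[-1,0]\}=U(-1)\{U(s):s\in[0,1]\}$ is semi-$R$-bounded, Remark~\ref{rem-semi-R-bounded} shows $\{V(t):t\in[0,1]\}=\{U(s)':s\in[-1,0]\}$ is semi-$R$-bounded in $B(X')$. Applying the step above to $V$ on $X'$ (which has type $p'$) gives that $\{(1+|t|)^{-\beta}U(t)':t\in\R\}$ is semi-$R$-bounded in $B(X')$ for $\beta>\alpha+\tfrac1{p'}-\tfrac12$; taking adjoints once more via Remark~\ref{rem-semi-R-bounded} and restricting along the canonical embedding $X\hookrightarrow X''$ (noting $U(t)''|_X=U(t)$) transfers this back to $\{(1+|t|)^{-\beta}U(t):t\in\R\}$. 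Combining the two exponents gives the claimed range $\beta>\alpha+\min(\tfrac1p,\tfrac1{p'})-\tfrac12$. The computations are routine throughout; the only points needing a little care are the two stability properties of ($R$- and semi-$R$-)bounds recalled above, the use of $\cotype\C=2$ so that van Gaans produces exactly the exponent $\tfrac1p-\tfrac12$ in $B(\C,X)$, and the bidual bookkeeping in the dual step, and I do not anticipate a genuine obstacle beyond these.
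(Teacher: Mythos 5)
Your proposal is correct and follows essentially the same route as the paper: decompose $\R$ into unit intervals, use the group law $U(t)=U(n)U(t-n)$ together with the norm bound on $U(n)$ to estimate $R(\tau_n)\lesssim(1+|n|)^{\alpha-\beta}$, and sum via Proposition \ref{Prop van Gaans}; for part (2) the paper likewise invokes Lemma \ref{Lem semi-R-bounded} (equivalently, van Gaans in $B(\C,X)$ with $\cotype\C=2$) and a dualization when $p'>p$. The only cosmetic difference is that the paper absorbs the scalar factor $(1+|t|)^{-\beta}$ via a convex-hull argument rather than the contraction principle, and your explicit treatment of the degenerate cases and of the bidual bookkeeping merely fills in details the paper leaves implicit.
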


\begin{proof}
For part (1), we write 
\[\{ ( 1+ |t|)^{-\beta} U(t):\: t \in \R \} \subseteq C \text{conv} \left(\bigcup_{n \in \Z} \{U(t) :\: t \in [0,1] \} \circ \{ (1+|n|)^{-(\beta - \alpha)} (1 + |n|)^{-\alpha} U(n) \}\right),\]
where conv stands for the convex hull.
By \cite[2.13 Theorem]{KuWe}, taking the convex hull does not increase the $R$-bound.
Note that the last set is $R$-bounded as a singleton with $R$-bound $\lesssim (1 + |n|)^{-(\beta-\alpha)},$ which is $\ell^r(\Z)$-summable for $\frac1r = \frac1p - \frac1q.$
Since $\{ U(t):\: t \in [0,1] \}$ is $R$-bounded by assumption, Proposition \ref{Prop van Gaans} yields the claim.

For part (2), we argue similarly; note that the composition of a semi-$R$-bounded set after a singleton is again semi-$R$-bounded.
We use Proposition \ref{Prop van Gaans} together with Lemma \ref{Lem semi-R-bounded}, a dualization argument if $p' > p$ and the fact that $U(t)'$ is again a one parameter group.
\end{proof}

\section{The $\HI$ and H\"ormander calculus}\label{Sec Hor Mih classes}

Our approach to the H\"ormander calculus is based on the $\HI$ calculus.

\subsection{$0$-sectorial operators}\label{Subsec A B}

We briefly recall standard notions of sectorial operators and the $\HI$ calculus.
For $\omega \in (0,\pi)$ we let $\Sigma_\omega = \{ z \in \C \backslash \{ 0 \} :\: | \arg z | < \omega \}$ be the sector around the positive half-axis of aperture angle $2 \omega.$
We further define $\HI(\Sigma_\omega)$ to be the space of bounded holomorphic functions on $\Sigma_\omega.$
This space is a Banach algebra when equipped with the norm $\|f\|_{\infty,\omega} = \sup_{\lambda \in \Sigma_\omega} |f(\lambda)|.$

A closed operator $A : D(A) \subset X \to X$ is called $\omega$-sectorial, if the spectrum $\sigma(A)$ is contained in $\overline{\Sigma_\omega},$ $R(A)$ is dense in $X$ and
\begin{equation}\label{Equ Def Sectorial}
\text{for all }\theta > \omega\text{ there is a }C_\theta > 0\text{ such that }\|\lambda (\lambda - A)^{-1}\| \leq C_\theta \text{ for all }\lambda \in \overline{\Sigma_\theta}^c.
\end{equation}
Note that $\overline{R(A)} = X$ along with \eqref{Equ Def Sectorial} implies that $A$ is injective.
In the literature, in the definition of sectoriality, the condition $\overline{R(A)} = X$ is sometimes omitted.
Note that if $A$ satisfies the conditions defining $\omega$-sectoriality except $\overline{R(A)} = X$ on $X = L^p(\Omega),\, 1 < p < \infty$ (or any reflexive space),
then there is a canonical decomposition $X  = \overline{R(A)} \oplus N(A),\,x = x_1 \oplus x_2,$ and $A = A_1 \oplus 0,\,x \mapsto A x_1 \oplus 0,$
such that $A_1$ is $\omega$-sectorial on the space $\overline{R(A)}$ with domain $D(A_1) = \overline{R(A)} \cap D(A).$

For an $\omega$-sectorial operator $A$ and a function $f \in \HI(\Sigma_\theta)$ for some $\theta \in (\omega,\pi)$ that satisfies moreover an estimate $|f(\lambda)| \leq C |\lambda|^\epsilon / | 1 + \lambda |^{2\epsilon},$
one defines the operator
\begin{equation}\label{Equ Cauchy Integral Formula}
f(A) = \frac{1}{2 \pi i} \int_{\Gamma} f(\lambda) (\lambda - A)^{-1} d\lambda ,
\end{equation}
where $\Gamma$ is the boundary of a sector $\Sigma_\sigma$ with $\sigma \in (\omega,\theta),$ oriented counterclockwise.
By the estimate of $f,$ the integral converges in norm and defines a bounded operator.
If moreover there is an estimate $\|f(A)\| \leq C \|f\|_{\infty,\theta}$ with $C$ uniform over all such functions, then $A$ is said to have a bounded $\HI(\Sigma_\theta)$ calculus.
In this case, there exists a bounded homomorphism $\HI(\Sigma_\theta) \to B(X),\,f \mapsto f(A)$ extending the Cauchy integral formula \eqref{Equ Cauchy Integral Formula}.

We refer to \cite{CDMY} for details.
We call $A$ $0$-sectorial if $A$ is $\omega$-sectorial for all $\omega > 0.$
Further, $A$ is called $R$-sectorial if $\{ \lambda (\lambda - A)^{-1} :\: \lambda \in \overline{\Sigma_\theta}^c \}$ is $R$-bounded for some $\theta \in (0,\pi)$ \cite[p.~76]{KuWe}.
In this case, $\omega_R(A)$ is defined to be the infimum over all such $\theta.$
Note that if $X$ has property $(\alpha)$ (see Section \ref{Sec Prelims Rad} for the definition), then a sectorial operator with bounded $\HI$ calculus is always $R$-sectorial \cite[12.8 Theorem]{KuWe}.
For the definition of $R$-boundedness see Section \ref{Sec Prelims Rad}.

To build stronger functional calculi we recall the following function spaces.

\begin{defi}\label{Def Mih Hor}~
\begin{enumerate}
\item
Let $p \in [1,\infty)$ and $\alpha > \frac1p.$
We define 
\[\Sea = \{ f : (0,\infty) \to \C :\: \|f\|_{\Sea} = \|f_e\|_{\Soa} < \infty \}\]
and equip it with the norm $\|f\|_{\Sea}.$
Here we write from now on
\[f_e : J \to \C,\,z \mapsto f(e^z)\]
for a function $f : I \to \C$ such that $I \subset \C \backslash (-\infty,0]$ and $J = \{ z \in \C : \: | \Im z | < \pi,\:e^z \in I \}.$
Moreover,
$\Soa = \{ f \in L^p(\R):\: \|f\|_{\Soa} = \| (\hat{f}(t)(1 + |t|)^\alpha)\ck\|_p < \infty \}.$
The spaces $\Soa$ and $\Sea$ are Banach algebras with respect to pointwise multiplication if $\alpha > \frac1p.$
\item
Let $\psi$ be a fixed function in $C^\infty_c(\R_+)\backslash \{ 0 \}.$
We define the H\"ormander class
\[\Ha = \{ f \in L^p_{\text{loc}}(\R_+) :\: \|f\|_{\Ha} = \sup_{t > 0}\|\psi f(t\cdot)\|_{\Soa} < \infty \}.\]
This definition does not depend on the particular choice of $\psi,$ two different choices giving equivalent norms, see e.g. \cite[p.~445]{DuOS}.
\end{enumerate}
\end{defi}

We have the following elementary properties of H\"ormander spaces.
Their proofs may be found in \cite[Propositions 4.8 and 4.9, Remark 4.16]{Kr}.

\begin{lem}\label{Lem Elementary Mih Hor}
Let $p,q \in [1,\infty).$
\begin{enumerate}
\item The spaces $\Sea$ and $\Ha$ are Banach algebras.
\item
Let $\alpha > \frac1q > \frac1p,$ $\alpha \geq \beta + \frac1q - \frac1p $ and $\sigma \in (0,\pi).$
Then
\[\HI(\Sigma_\sigma) \hookrightarrow \Ha \hookrightarrow \Hor^\alpha_q \hookrightarrow \Hor^\beta_p.\]
In particular, the choice of $p$ in $\Hor^\alpha_p$ is only relevant when one is looking for the best exponent $\alpha.$
\item For any $t > 0,$ we have $\|f\|_{\Ha} = \|f(t \cdot)\|_{\Ha}.$
\end{enumerate}
\end{lem}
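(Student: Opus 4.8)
The plan is to prove the three claims of Lemma \ref{Lem Elementary Mih Hor} in turn, reducing everything as far as possible to the one-dimensional Sobolev spaces $W^\alpha_p(\R)$ and their translation-invariant structure via the exponential substitution $f \mapsto f_e$.

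For part (1), the key observation is that $\Sea$ is isometrically (via $f \mapsto f_e$) a \emph{retract}, or at least a subalgebra, of the situation for $\Soa$. So first I would recall why $\Soa = \{f \in L^p(\R) : \|(\hat f(t)(1+|t|)^\alpha)\check{\phantom{i}}\|_p < \infty\}$ is a Banach algebra under pointwise multiplication when $\alpha > 1/p$: this is the classical fact that the Bessel-potential space $W^\alpha_p(\R)$ is closed under products for $\alpha > 1/p$ (it embeds into $L^\infty$ and one uses a paraproduct/Leibniz estimate $\|fg\|_{W^\alpha_p} \lesssim \|f\|_{W^\alpha_p}\|g\|_\infty + \|f\|_\infty\|g\|_{W^\alpha_p}$ together with the Sobolev embedding $W^\alpha_p \hookrightarrow L^\infty$). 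Since $(fg)_e = f_e g_e$ pointwise on $J$, the algebra property transfers immediately to $\Sea$. For $\Ha$ I would note $\|\psi\cdot(fg)(t\cdot)\|_{\Soa}$; writing $\psi(fg)(t\cdot) = [\tilde\psi f(t\cdot)][\psi g(t\cdot)]$ for a suitable $\tilde\psi \in C^\infty_c(\R_+)$ equal to $1$ on $\supp\psi$, the Banach-algebra property of $\Soa$ gives $\|\psi(fg)(t\cdot)\|_{\Soa} \lesssim \|\tilde\psi f(t\cdot)\|_{\Soa}\|\psi g(t\cdot)\|_{\Soa} \lesssim \|f\|_{\Ha}\|g\|_{\Ha}$, using part (of the already-invoked) independence of the defining function to replace $\tilde\psi$ by $\psi$ at the cost of a constant. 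Since all of this is referenced to \cite[Propositions 4.8 and 4.9, Remark 4.16]{Kr}, I would simply cite it.

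For part (2), the chain $\Ha \hookrightarrow \Hor^\alpha_q \hookrightarrow \Hor^\beta_p$ follows from the corresponding embeddings $\Soa \hookrightarrow \Wor^\alpha_q \hookrightarrow \Wor^\beta_p$ localized by $\psi$: the first because $\Sea = \Wor^\alpha_p$-type norms control $\Hor$-norms with the same parameters after multiplying by a fixed cutoff, the second is the standard Sobolev embedding $W^\alpha_q(\R) \hookrightarrow W^\beta_p(\R)$ valid for $q \le p$ and $\alpha - 1/q \ge \beta - 1/p$ (loss of integrability compensated by loss of smoothness), applied uniformly in the dilation parameter $t$ since the condition is dilation-invariant. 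The embedding $\HI(\Sigma_\sigma) \hookrightarrow \Ha$ is the substantive one: given $f \in \HI(\Sigma_\sigma)$, one must bound $\sup_{t>0}\|\psi f(t\cdot)\|_{\Soa}$. Here I would use that $f(t\cdot)$ restricted to a neighborhood of $\supp\psi$ is, after the substitution $z\mapsto e^z$, holomorphic and bounded on a horizontal strip of fixed width $2\sigma$ (independent of $t$), hence by Cauchy estimates all its derivatives up to order $\lceil\alpha\rceil$ are bounded on the slightly smaller strip containing $(\log\psi)$'s support, with bounds $\lesssim \|f\|_{\infty,\sigma}$ uniformly in $t$; this controls the $W^{\lceil\alpha\rceil}_\infty$ norm of $\psi_e f_e(\cdot + \log t)$ on a fixed compact interval, which dominates its $\Soa$-norm. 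Again this is exactly what \cite{Kr} records, so I would cite rather than reprove.

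For part (3), dilation invariance $\|f\|_{\Ha} = \|f(t\cdot)\|_{\Ha}$ is immediate from the definition: $\|f(t\cdot)\|_{\Ha} = \sup_{s>0}\|\psi f(ts\cdot)\|_{\Soa} = \sup_{r>0}\|\psi f(r\cdot)\|_{\Soa} = \|f\|_{\Ha}$ after the substitution $r = ts$. The main obstacle, if one wanted a fully self-contained argument rather than citing \cite{Kr}, is the independence of the $\Ha$-norm from the choice of $\psi$ (needed in part (1)) together with the $\HI \hookrightarrow \Ha$ embedding in part (2); both hinge on the same kind of estimate — comparing localizations at different scales via a partition of unity $\sum_n \phi(2^{-n}\cdot) \equiv 1$ on $\R_+$ and summing the overlapping pieces, which is where the dilation structure and the Banach-algebra property interact. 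Since the paper explicitly defers these to \cite{Kr}, the cleanest write-up just states that the proofs are found there and sketches the transfer principle $f\mapsto f_e$ connecting $\Sea$ to $\Soa$.
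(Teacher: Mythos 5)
Your proposal is correct and takes essentially the same route as the paper, which simply defers the proof to \cite[Propositions 4.8 and 4.9, Remark 4.16]{Kr}; your supplementary sketches (algebra property of $\Soa$ via the Sobolev embedding into $L^\infty$ and a Leibniz estimate, the localized Sobolev embeddings for the chain in (2), Cauchy estimates on a strip for $\HI(\Sigma_\sigma)\hookrightarrow\Ha$, and the change of variables for (3)) are the standard arguments underlying that reference.
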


\begin{rem}\label{Rem Classical Mih Hor}
The name ``H\"ormander class'' is justified by the following fact.
The classical H\"{o}rmander condition with a parameter $\alpha_1 \in \N$ reads as follows \cite[(7.9.8)]{Hoa}:
\begin{equation}\label{Equ Classical Hoermander condition}
\sum_{k = 0}^{\alpha_1} \sup_{R > 0} \int_{R/2}^{2R} |R^k f^{(k)}(t)|^p dt/R < \infty.
\end{equation}
\end{rem}

By the following lemma which is proved in \cite[Proposition 4.11]{Kr}, the norm $\|\cdot\|_{\Ha}$ expresses condition \eqref{Equ Modern Hoermander condition}
and generalizes the classical H\"{o}rmander condition \eqref{Equ Classical Hoermander condition}.

\begin{lem}\label{Lem Classical and modern Hoermander condition}
Let $f \in L^1_{\text{loc}}(\R_+).$
Consider the conditions
\begin{enumerate}
\item $f$ satisfies \eqref{Equ Classical Hoermander condition},
\item $f$ satisfies
\begin{equation}\label{Equ Modern Hoermander condition}
\sup_{n \in \Z} \| \equi_n f_e \|_{\Sob^\alpha} < \infty,
\end{equation}
where $(\equi_n)_{n \in \Z}$ is an equidistant partition of unity.
By this we mean the following:
Let $\equi \in C^\infty_c(\R).$
Assume that $\supp \equi \subset [-1,1]$ and $\sum_{n=-\infty}^\infty \equi(t-n) = 1$ for all $t \in \R.$
For $n \in \Z,$ we put $\equi_n = \equi(\cdot - n)$ and call $(\equi_n)_{n \in \Z}$ an equidistant partition of unity.
One easily checks that \eqref{Equ Modern Hoermander condition} does not depend on the particular choice of $(\equi_n)_{n \in \Z}.$
\item $\|f\|_{\Ha} < \infty.$
\end{enumerate}
Then $(1) \Rightarrow (2)$ if $\alpha_1 \geq \alpha$ and $(2) \Rightarrow (1)$ if $\alpha \geq \alpha_1.$
Further, $(2) \Leftrightarrow (3).$
\end{lem}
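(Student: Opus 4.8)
\emph{Proof idea.} Everything runs through the logarithmic substitution $t = e^z$, under which dilations of $f$ become translations of $f_e$. Since $\equi$ is supported away from $0$ and $\infty$, its pullback $\equi_e = \equi(e^{\cdot})$ lies in $C^\infty_c(\R)$, and for $t = e^s$ one computes $(\equi\,f(t\cdot))_e(z) = \equi_e(z)\,f_e(z+s)$. Using the translation invariance of the (Bessel-potential) norm defining $\Soa$, this gives
\[ \|f\|_{\Ha} \;=\; \sup_{s\in\R}\,\bigl\| \equi_e(\cdot-s)\,f_e \bigr\|_{\Soa}. \]
So condition (3) says that $f_e$ belongs to the \emph{continuously} uniformly local version of $\Soa$ (cut off by translates of the fixed bump $\equi_e$), condition (2) says it belongs to the \emph{discretely} uniformly local version (cut off by the partition $(\equi_n)$), and condition (1) will turn out to be the discrete version with $\Soa$ replaced by the classical, integer-order Sobolev space $W^{\alpha_1}_p$.

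First I would dispatch $(2)\Leftrightarrow(3)$, which is the robust equivalence, valid for all $1\le p<\infty$ with the same order on both sides. The only analytic inputs are that $\Soa$ is a Banach algebra (already recorded) and that its norm is translation invariant; together they say that pointwise multiplication by any translate of a fixed $C^\infty_c$ function is bounded on $\Soa$ uniformly in the translate. For $(2)\Rightarrow(3)$: for each $s$ the support of $\equi_e(\cdot-s)$ meets at most $N$ of the $\equi_n$ with $N$ independent of $s$, so $\equi_e(\cdot-s)f_e = \sum_n \equi_e(\cdot-s)\,\equi_n f_e$ has a bounded number of nonzero terms, each estimated by the multiplication bound against the fixed bump $\equi_e$. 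For $(3)\Rightarrow(2)$: after using the stated $\equi$-independence of $\|\cdot\|_{\Ha}$ to pick a convenient $\equi$, choose finitely many shifts $s_1,\dots,s_m$ so that $h := \sum_j \equi_e(\cdot-s_j)^2$ is bounded below on $\supp\equi$; then $\equi_n f_e = (\equi/h)(\cdot-n)\sum_j \equi_e(\cdot-n-s_j)^2 f_e$, and regrouping $(\equi/h)(\cdot-n)$ with one factor $\equi_e(\cdot-n-s_j)$ displays each summand as a translate of a fixed $C^\infty_c$ function times $\equi_e(\cdot-n-s_j)f_e$, which (3) controls uniformly in $n$.

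Next I would identify (1) with the discrete classical-Sobolev condition and then invoke a Sobolev embedding. On $[R/2,2R]$ one has $R^k\asymp t^k$, and the Euler-operator identities $(t\,d/dt)^k = \sum_{j=1}^k S(k,j)\,t^j(d/dt)^j$, together with their invertible triangular inverses, give $\sum_{k\le\alpha_1}|R^k f^{(k)}(t)|^p\asymp\sum_{k\le\alpha_1}|f_e^{(k)}(z)|^p$ at $t=e^z$ over $|z-\log R|\le\log 2$. Hence \eqref{Equ Classical Hoermander condition} is equivalent, via the log substitution and the elementary (Leibniz-rule) form of the discrete/continuous localization, to $\sup_n\|\equi_n f_e\|_{W^{\alpha_1}_p(\R)}<\infty$. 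The two implications are then exactly the continuous Sobolev embeddings $W^{\alpha_1}_p(\R)\hookrightarrow\Soa$ for $\alpha\le\alpha_1$ (giving $(1)\Rightarrow(2)$) and $\Soa\hookrightarrow W^{\alpha_1}_p(\R)$ for $\alpha\ge\alpha_1$ (giving $(2)\Rightarrow(1)$), applied termwise to the pieces $\equi_n f_e$.

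The change of variables and the Stirling-number bookkeeping are routine. The step I would expect to need care with — and the reason the full proof is referred to \cite{Kr} — is the exact range of those Sobolev embeddings at the endpoint $p=1$, where the Bessel-potential space $\Soa$ and the classical Sobolev space $W^{\alpha_1}_p$ no longer coincide and one must interpose Besov spaces; this is the source of the mild care needed in reading the inequalities $\alpha_1\ge\alpha$ and $\alpha\ge\alpha_1$. For $1<p<\infty$ the two spaces agree and the implications are immediate.
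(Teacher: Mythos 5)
The paper contains no proof of this lemma: it is imported from \cite[Proposition 4.11]{Kr}, so there is no in-paper argument to compare yours against. Judged on its own terms, your reconstruction is the standard argument and is essentially correct. The substitution $t=e^s$ together with translation invariance of $\|\cdot\|_{\Soa}$ does give $\|f\|_{\Ha}=\sup_{s\in\R}\|\psi(e^{(\cdot)-s})f_e\|_{\Soa},$ so (3) is the continuously indexed uniformly local condition and (2) the discretely indexed one; the equivalence then follows exactly as you say from finite overlap plus the fact that translates of a fixed $C^\infty_c$ function are uniformly bounded multipliers of $\Soa$ (your partition-of-unity-by-squares device for $(3)\Rightarrow(2)$ is fine, with the cosmetic fix $|\cdot|^2=(\cdot)\overline{(\cdot)}$ if the bump is complex, or by normalizing the bump via the stated independence of the choice of $\psi$). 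The identification of (1) with $\sup_n\|\equi_n f_e\|_{W^{\alpha_1,p}(\R)}<\infty$ for the \emph{classical} Sobolev space via the triangular Stirling-number relations between $t^k(d/dt)^k$ and $(d/dz)^k$ at $t=e^z$ is also correct, and the two one-sided implications then reduce, as you say, to embeddings between the classical and Bessel-potential scales applied termwise to the compactly supported pieces $\equi_n f_e$, uniformly in $n$ by translation invariance.

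The one point you flag — the endpoint $p=1$ — is indeed the only delicate spot, and your instinct is right: for $1<p<\infty$ the spaces $W^{\alpha_1,p}(\R)$ and $\Sobolev^{\alpha_1}_p$ coincide and both implications hold including at $\alpha=\alpha_1$, whereas for $p=1$ they differ (the Hilbert transform obstructs $W^{k,1}\hookrightarrow\Sobolev^{k}_1$), so $(1)\Rightarrow(2)$ at $p=1$ genuinely needs $\alpha<\alpha_1$ and a Besov-space interposition $W^{\alpha_1,1}_{c}\hookrightarrow B^{\alpha_1}_{1,\infty}\hookrightarrow B^{\alpha}_{1,1}\hookrightarrow\Sobolev^{\alpha}_1,$ rather than the bare inequality $\alpha_1\ge\alpha$ as literally stated. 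This is a (minor) imprecision of the statement rather than of your proof; since you identify it explicitly and supply the correct mechanism away from the endpoint, I see no gap in the proposal.
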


\subsection{Functional calculus for $0$-sectorial operators}\label{Subsec Prelims Functional calculus}

In order to reduce the H\"ormander calculus to the $\HI$ calculus we will use the following approximation of $\Sobexp^\beta_p$ and $\Hor^\beta_p$ functions holomorphic in a sector.
The following lemma which is proved in \cite[Lemma 4.15]{Kr} will be useful.

\begin{lem}\label{Lem HI dense in diverse spaces}
Let $p \in [1,\infty)$ and $\beta > \frac1p.$
Then $\bigcap_{0 < \omega < \pi}\HI(\Sigma_\omega) \cap \Sobexp^\beta_p$ is dense in $\Sobexp^\beta_p.$
More precisely, if $f \in \Sobexp^\beta_p,$ $\psi \in C^\infty_c$ such that $\psi(t) =1$ for $|t| \leq 1$ and $\psi_n = \psi(2^{-n}(\cdot)),$
then
\[ (f_e \ast \check\psi_n) \circ \log \in \bigcap_{0 < \omega < \pi} \HI(\Sigma_\omega) \cap \Sobexp^\beta_p \text{ and }(f_e \ast \check\psi_n) \circ \log \to f\text{ in }\Sobexp^\beta_p. \]
Thus if $f$ happens to belong to several $\Sobexp^\beta_p$ spaces as above with different indices,
then it can be simultaneously approximated by a holomorphic sequence in any of these spaces.
\end{lem}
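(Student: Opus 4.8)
The plan is to pass to the logarithmic picture, in which $\Sobexp^\beta_p$ becomes a classical Sobolev space on $\R$, and then to show that an ordinary Littlewood--Paley frequency truncation of an element of that Sobolev space both approximates it in norm and produces a band-limited (hence entire) function whose composition with $\log$ is bounded and holomorphic on every sector $\Sigma_\omega$, $0<\omega<\pi$.

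Concretely, $f \mapsto f_e$, $f_e(x) = f(e^x)$, is an isometric isomorphism of $\Sobexp^\beta_p$ onto $\Sobolev^\beta_p(\R)$, so it is enough to work with $h := f_e \in \Sobolev^\beta_p(\R)$ and with the functions $h \ast \check\psi_n$, where $\check\psi_n = 2^n \check\psi(2^n \cdot)$. First I would record two routine facts. Since $\psi \in C^\infty_c(\R)$ and $\psi(0) = 1$, Fourier inversion gives $\int_\R \check\psi = \psi(0) = 1$, and $\check\psi \in \mathcal{S}(\R) \subset L^1(\R)$, so $(\check\psi_n)_n$ is an approximate identity. Writing $J_\beta g := (\hat g(t)(1+|t|)^\beta)\ck$ for the operator defining the $\Sobolev^\beta_p$ norm (so $\|g\|_{\Sobolev^\beta_p} = \|J_\beta g\|_p$), the operator $J_\beta$ and convolution by $\check\psi_n$ are both Fourier multiplications and hence commute; thus, with $g_0 := J_\beta h \in L^p(\R)$, one has $J_\beta(h \ast \check\psi_n) = g_0 \ast \check\psi_n$. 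This immediately gives $h \ast \check\psi_n \in \Sobolev^\beta_p(\R)$ (with $\|h \ast \check\psi_n\|_{\Sobolev^\beta_p} \le \|\check\psi_n\|_1 \|g_0\|_p$), and, since $1 \le p < \infty$, the approximate-identity property yields $\|h - h \ast \check\psi_n\|_{\Sobolev^\beta_p} = \|g_0 - g_0 \ast \check\psi_n\|_p \to 0$.

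The main step is the holomorphy and the boundedness on sectors. The Fourier transform of $h \ast \check\psi_n$ equals $\psi_n \hat h$, a compactly supported tempered distribution with support in $2^n \supp\psi \subseteq [-2^n R, 2^n R]$ for any $R$ with $\supp\psi \subseteq [-R,R]$. By the Paley--Wiener--Schwartz theorem, $h \ast \check\psi_n$ is the restriction to $\R$ of an entire function $G_n$ of exponential type at most $2^n R$; moreover $\beta > \frac1p$ gives the Sobolev embedding $\Sobolev^\beta_p(\R) \hookrightarrow L^\infty(\R)$, so $G_n|_\R$ is bounded. By the Phragm\'en--Lindel\"of principle for entire functions of exponential type (equivalently, via the structure theorem for compactly supported distributions), $G_n$ is then bounded on every horizontal strip of finite width, with $\sup_{|\Im z| < \omega} |G_n(z)| \le \|h \ast \check\psi_n\|_{L^\infty(\R)}\, e^{2^n R\, \omega}$. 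Since $\log$ maps $\Sigma_\omega$ into $\{\,|\Im z| < \omega\,\}$, we conclude
\[ \sup_{\lambda \in \Sigma_\omega} \bigl| (G_n \circ \log)(\lambda) \bigr| \le \|h \ast \check\psi_n\|_{L^\infty(\R)}\, e^{2^n R\, \omega} < \infty \qquad (0 < \omega < \pi), \]
so $G_n \circ \log \in \HI(\Sigma_\omega)$ for all such $\omega$. Unravelling the substitution, $(f_e \ast \check\psi_n)\circ\log = G_n \circ \log$ on $\Sigma_\omega$, and restricted to $\R$ one has $(G_n \circ \log)_e = G_n|_\R = f_e \ast \check\psi_n \in \Sobolev^\beta_p(\R)$; hence $(f_e \ast \check\psi_n)\circ\log \in \bigcap_{0<\omega<\pi}\HI(\Sigma_\omega) \cap \Sobexp^\beta_p$ and $\|(f_e \ast \check\psi_n)\circ\log - f\|_{\Sobexp^\beta_p} = \|f_e \ast \check\psi_n - f_e\|_{\Sobolev^\beta_p} \to 0$, which proves density. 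The truncation not depending on $\beta$, the same computation gives simultaneous approximation in every $\Sobexp^\gamma_p$ that contains $f$.

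I expect the delicate point to be the third paragraph: one needs that frequency truncation yields not merely a smooth function but an entire function whose growth in the imaginary direction is governed purely by the exponential type, with no polynomial factor in the real direction, so that composition with $\log$ stays bounded on full sectors and not only on bounded subsectors. This is exactly the Paley--Wiener--Schwartz plus Phragm\'en--Lindel\"of input; the remainder is bookkeeping with Fourier multipliers, Young's inequality and convergence of approximate identities in $L^p$.
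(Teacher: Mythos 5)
Your proof is correct and follows exactly the route indicated by the lemma's statement (the paper itself only cites \cite[Lemma 4.15]{Kr} for the argument): norm convergence via the commutation of the Bessel-potential multiplier with convolution by the approximate identity $\check\psi_n$, and membership in every $\HI(\Sigma_\omega)$ via Paley--Wiener--Schwartz together with the Phragm\'en--Lindel\"of/Plancherel--P\'olya bound $|G_n(x+iy)|\leq \sup_{\R}|G_n|\,e^{2^nR|y|}$ on horizontal strips, which is precisely the point that makes composition with $\log$ land in $\HI$ of a full sector. No gaps; the independence of the truncation from $\beta$ and $p$ correctly yields the simultaneous approximation claim.
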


Lemma \ref{Lem HI dense in diverse spaces} enables to base the $\Sobexp^\beta_p$ calculus on the $\HI$ calculus.

\begin{defi}\label{Def Line calculi}
Let $A$ be a 0-sectorial operator, $p \in [1,\infty)$ and $\beta > \frac1p.$
We say that $A$ has a (bounded) $\Sobexp^\beta_p$ calculus if there exists a constant $C > 0$ such that
\[
\|f(A)\| \leq C \|f\|_{\Sobexp^\beta_p}\quad (f \in \bigcap_{0 < \omega < \pi} \HI(\Sigma_\omega) \cap \Sobexp^\beta_p).
\]
In this case, by the density of $\bigcap_{0 < \omega < \pi} \HI(\Sigma_\omega) \cap \Sobexp^\beta_p$ in $\Sobexp^\beta_p,$ the algebra homomorphism $u : \bigcap_{0 < \omega < \pi} \HI(\Sigma_\omega) \cap \Sobexp^\beta_p \to B(X)$ given by $u(f) = f(A)$ can be continuously extended in a unique way to a bounded algebra homomorphism
\[u: \Sobexp^\beta_p \to B(X),\,f \mapsto u(f).\]
We write again $f(A)=u(f)$ for any $f \in \Sobexp^\beta_p.$
Assume that $A$ has a $\Sobexp^\beta_p$ calculus and a $\Sobexp^{\beta'}_{p'}$ calculus.
Then for $f \in \Sobexp^\beta_p \cap \Sobexp^{\beta'}_{p'},\, f(A)$ is defined twice by the above.
However, Lemma \ref{Lem HI dense in diverse spaces} shows that these definitions coincide.
\end{defi}

\begin{defi}\label{Def R-bdd Matr R-bdd calculus}
Let $A$ be a $0$-sectorial operator.
We say that $A$ has an $R$-bounded $\Sea$ calculus if $A$ has a $\Sea$ calculus, which is an $R$-bounded mapping in the sense of \cite[Definition 2.7]{KrLM}, i.e.
\[ R(\{ f(A):\: \|f\|_{\Sea} \leq 1\}) < \infty.\]
\end{defi}

\begin{defi}\label{Def Hor calculus}
Let $p \in [1,\infty),\,\alpha > \frac1p$ and let $A$ be a $0$-sectorial operator.
We say that $A$ has a (bounded) $\Ha$ calculus if there exists a constant $C > 0$ such that
\begin{equation}\label{Equ Def Hor sect calculus}
\|f(A)\| \leq C \|f\|_{\Ha} \quad (f \in \bigcap_{\omega \in (0,\pi)} \HI(\Sigma_\omega) \cap \Ha).
\end{equation}
Similarly as in Definition \ref{Def R-bdd Matr R-bdd calculus}, we say that $A$ has an $R$-bounded $\Ha$ calculus if moreover $R(\{f(A):\: f \in \bigcap_{\omega \in (0,\pi)} H^\infty(\Sigma_\omega) \cap \Ha , \: \|f\|_{\Ha} \leq 1\}) < \infty.$
\end{defi}


Finally, we record some norm estimates which will be useful later.
The functions in the following norm estimates correspond via functional calculus to typical operator families.
We use the short hand notation $\langle t \rangle = \sqrt{ 1 + t^2 }.$

\begin{lem}\label{Lem Mihlin norms of functions}
Let $p \in [1,\infty)$ and $\alpha > \frac1p.$
We have the following $\Ha$ norm estimates for functions depending on the variable $\lambda > 0.$
\begin{enumerate}
\item For $\theta \in (-\pi,\pi),\: \| \exp(-e^{i\theta} \lambda) \|_{\Ha} \lesssim (\frac{\pi}{2} - |\theta|)^{-\alpha}.$
\item For $s \in \R,\: \| (1 + |s|\lambda)^{-\alpha} \exp(is\lambda) \|_{\Ha} \lesssim 1.$
\item For $t \in \R,\: \|\lambda^{it}\|_{\Hor^{\alpha-\epsilon}_p} \lesssim \ta.$
\item For $1<\beta<\alpha,$ $\sup_{u > 0} \|(1 - \lambda/u)^{\alpha - 1}_+\|_{\Hor^\beta_1} < \infty,$ where $x_+ = \max(x,0).$
\end{enumerate}
\end{lem}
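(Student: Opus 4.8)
The plan is to estimate each of the four quantities directly from the definition $\|f\|_{\Ha} = \sup_{t>0}\|\psi f(t\cdot)\|_{\Soa}$, using the fact that $\Soa$ is a Banach algebra (for $\alpha > \frac1p$) and that its norm is $\|g\|_{\Soa} = \|(\widehat g(\cdot)(1+|\cdot|)^\alpha)\ck\|_p$, equivalently an $L^p$-based Bessel-potential norm. The key point in each case is that the dilation $f(t\cdot)$ must be handled uniformly in $t>0$, and after cutting off with the compactly supported $\psi$ one is reduced to controlling a smooth function and its (fractional) derivatives of order up to $\alpha$ on a fixed compact interval away from $0$.

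\textbf{Items (1) and (2).} For $g_\theta(\lambda) = \exp(-e^{i\theta}\lambda)$, on the support of $\psi(t\cdot)$, which lives in a fixed annulus $\{c_1 \le t\lambda \le c_2\}$, I would substitute $\mu = t\lambda$ and write $\psi g_\theta(t\cdot)(\mu/t) = \psi(\mu)\exp(-e^{i\theta}\mu/t)$; the parameter $t$ enters only through the real rescaling, under which the $\Soa$ norm is invariant up to constants depending on $c_1,c_2$, so it suffices to bound $\|\psi \exp(-e^{i\theta}(\cdot))\|_{\Soa}$, i.e.\ really $\|\psi(e^{(\cdot)})\exp(-e^{i\theta}e^{(\cdot)})\|_{W^\alpha_p(\R)}$. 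Since $\psi$ has compact support in $(0,\infty)$, only finitely many derivatives matter and $\frac{d^k}{d\lambda^k}\exp(-e^{i\theta}\lambda)$ produces a factor $(e^{i\theta})^k = e^{ik\theta}$, whose modulus is $1$; the blow-up as $|\theta|\to\frac\pi2$ comes from the $W^\alpha_p$ norm of $\exp(-e^{i\theta}e^{(\cdot)})$ in the logarithmic variable, where one sees polynomial growth in $\Re(e^{i\theta}) ^{-1} \asymp (\frac\pi2-|\theta|)^{-1}$; interpolating to non-integer $\alpha$ gives the claimed $(\frac\pi2-|\theta|)^{-\alpha}$. For (2) the same substitution turns $(1+|s|\lambda)^{-\alpha}e^{is\lambda}$, evaluated at $\lambda = \mu/t$, into $(1 + (|s|/t)\mu)^{-\alpha}e^{i(s/t)\mu}$; setting $r = s/t \in \R$, I must bound $\|\psi(\mu)(1+|r|\mu)^{-\alpha}e^{ir\mu}\|_{\Soa}$ uniformly in $r$. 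The oscillatory factor $e^{ir\mu}$ costs $\langle r\rangle^\alpha$ in $W^\alpha_p$, but this is exactly compensated by the decay of $(1+|r|\mu)^{-\alpha} \lesssim \langle r\rangle^{-\alpha}$ on the support of $\psi$ (where $\mu$ is bounded below); the remaining derivatives of $(1+|r|\mu)^{-\alpha}$ only improve the bound. This is essentially \cite[Lemma 4.13 or similar]{Kr}.

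\textbf{Item (3).} Here the exponent drops to $\alpha-\epsilon$ and the target is the $\Hor^{\alpha-\epsilon}_p$ norm rather than $\Ha$. The function $\lambda^{it}$ is scale-covariant: $(t_0\cdot)^{it}= t_0^{it}\lambda^{it}$ with $|t_0^{it}|=1$, so $\|\psi (t_0\lambda)^{it}\|_{\Soa} = \|\psi \lambda^{it}\|_{\Soa}$ exactly, and the $\sup_{t_0>0}$ is trivial. In the logarithmic variable $\lambda = e^z$, $\lambda^{it} = e^{itz}$, so I need $\|\psi_e(z) e^{itz}\|_{W^{\alpha-\epsilon}_p(\R)}$ where $\psi_e$ is a fixed bump. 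Each derivative brings down a factor of $it$, so $\frac{d^k}{dz^k}(\psi_e e^{itz})$ is a finite sum of terms $\binom kj (it)^j \psi_e^{(k-j)} e^{itz}$, giving a polynomial bound $\lesssim \langle t\rangle^k$ at integer level; the Bessel-potential interpolation between orders $\lfloor\alpha-\epsilon\rfloor$ and $\lceil\alpha-\epsilon\rceil$ yields $\lesssim \langle t\rangle^{\alpha-\epsilon} \le \langle t\rangle^\alpha = \ta$. (One could even get $\langle t\rangle^{\alpha-\epsilon}$, but $\ta$ is all that is claimed.)

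\textbf{Item (4).} The Bochner--Riesz function $b_u(\lambda) = (1-\lambda/u)^{\alpha-1}_+$ satisfies $b_u(\lambda) = b_1(\lambda/u)$, so by the dilation invariance of $\|\cdot\|_{\Hor^\beta_1}$ (Lemma \ref{Lem Elementary Mih Hor}(3)) it suffices to bound $\|b_1\|_{\Hor^\beta_1}$ for fixed $1<\beta<\alpha$, a single function. Now $b_1(\lambda) = (1-\lambda)_+^{\alpha-1}$ is supported in $[0,1]$ and is $C^\infty$ on $(0,1)$ but has a singularity of Hölder type $\alpha-1$ at $\lambda=1$ (and is smooth across $\lambda=0$ where it equals $1$). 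To compute $\|b_1\|_{\Hor^\beta_1} = \sup_{t>0}\|\psi b_1(t\cdot)\|_{\Soatwo[\beta]_1}$ — here $\Sobexp^\beta_1$ — I would split into the regimes where the annulus $\supp\psi(t\cdot)$ is far from $1$ (then $b_1(t\cdot)$ is $C^\infty$ with controlled derivatives, or identically handled near the flat part), and where it straddles $\lambda = 1/t$; in the latter the relevant question is the $W^\beta_1$-regularity in the logarithmic variable of a function behaving like $(1-e^z)_+^{\alpha-1}$ near $z=0$, which lies in $W^\beta_1$ precisely because $\beta < \alpha$ (the fractional derivative of order $\beta$ of $x_+^{\alpha-1}$ is locally integrable iff $\beta < \alpha$). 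Summing the dyadic contributions, using that the "bad" scales form a bounded range once the support is fixed, gives the finite supremum. I expect \textbf{item (4)} to be the main obstacle: unlike (1)--(3), the function is genuinely non-smooth, so one cannot just differentiate and collect factors; one has to invoke a fractional Sobolev regularity statement for $x_+^{\alpha-1}$ — the condition $\beta<\alpha$ being sharp — and carefully control how the $\psi$-localization interacts with the singular point under all dilations. The cleanest route is to cite the corresponding computation in \cite{Kr} or \cite{DuOS} for Bochner--Riesz multipliers; failing that, one proves the $W^\beta_1$ bound for $(1-x)_+^{\alpha-1}$ directly via its Fourier transform decay $|\widehat{b_1}(\xi)| \lesssim \langle\xi\rangle^{-\alpha}$ (stationary-phase/integration-by-parts at the endpoint), which gives $b_1 \in W^\beta_1$ for $\beta < \alpha - 1 + 1 = \alpha$ after accounting for the $L^1 \to$ Bessel-potential normalization, and then transfers to the logarithmic variable on the compact support of $\psi$ where $\log$ is a smooth diffeomorphism.
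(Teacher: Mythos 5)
Your overall strategy matches the paper's: reduce by dilation and the compact support of $\psi$ to a fixed annulus, bound integer-order derivatives directly, and pass to fractional $\alpha$ by interpolation; for item (4), reduce by scale invariance to the single function $(1-\lambda)_+^{\alpha-1}$ and invoke the known sharp $\Sobolev^\beta_1$-regularity of the Bochner--Riesz kernel (the paper cites the equivalence $(1-\lambda^2)_+^{\alpha-1}\in \Sobolev^\beta_1 \Leftrightarrow \beta<\alpha$ from \cite{COSY}, after factoring $(1-2^n\lambda)_+^{\alpha-1}\dyad_0 = (1-2^{2n}\lambda^2)_+^{\alpha-1}\bigl[(1+2^n\lambda)_+^{1-\alpha}\dyad_0\bigr]$ and using the algebra property). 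The one genuinely different route is item (2): the paper treats the exponent as a complex interpolation parameter, setting $g(z;\lambda)=(1+|s|\lambda)^{-z}e^{is\lambda}$, verifying $\|g(n+i\tau;\cdot)\|_{\Hor^n_p}\lesssim\langle\tau\rangle^n$ and applying Stein's subexponential interpolation, whereas you keep the exponent fixed at $\alpha$, observe that the $\langle r\rangle^{k}$ cost of $k$ derivatives is cancelled by $(1+|r|\mu)^{-\alpha}\lesssim\langle r\rangle^{-\alpha}$ on $\supp\psi$, and interpolate only in the smoothness index between $n=\lfloor\alpha\rfloor$ and $n+1$; since $\|\psi h_r\|_{\Sobolev^k_p}\lesssim\langle r\rangle^{k-\alpha}$ for $k=n,n+1$, the interpolated bound is $\langle r\rangle^{n+\vartheta-\alpha}=1$. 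This is correct and arguably more elementary than the paper's Stein-interpolation argument, at the price of being slightly less flexible (Stein's method would also handle a varying exponent).

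One caveat on your fallback for item (4): the pointwise decay $|\widehat{b_1}(\xi)|\lesssim\langle\xi\rangle^{-\alpha}$ does \emph{not} by itself give $b_1\in\Sobolev^\beta_1$ for $\beta<\alpha$, because membership in the $L^1$-based Bessel potential space requires $\bigl((1+|\xi|)^\beta\widehat{b_1}\bigr)\ck\in L^1$, and pointwise decay of order $\langle\xi\rangle^{-(\alpha-\beta)}$ with $\alpha-\beta$ small does not imply integrability of the inverse transform (the symbol need not even be square integrable). Your primary route --- citing the sharp computation, as the paper does --- is the correct one; the Fourier-decay shortcut should be dropped or replaced by the actual endpoint analysis.
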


\begin{proof}
(1) Let first $\alpha = n \in \N_0$ and put $f(\lambda) = \exp(-e^{i\theta} \lambda).$
We have by Lemma \ref{Lem Classical and modern Hoermander condition}, $\| f \|_{\Hor^n_p} \lesssim \sup\{ |\lambda^k f^{(k)}(\lambda)| :\: k = 0,\ldots,n,\, \lambda > 0 \}.$
The latter can easily be estimated by $(\frac{\pi}{2} - |\theta|)^{-n}.$
For non-integer $\alpha = n + \vartheta,\, \vartheta \in (0,1),$ we use the complex interpolation $[W^n_p, W^{n+1}_p]_{\vartheta} = W^\alpha_p$ to deduce by Lemma \ref{Lem Classical and modern Hoermander condition}
\begin{align*}
\| f \|_{\Ha} & \cong \sup_{t > 0} \|f(t\cdot)\psi\|_{\Sobolev^\alpha_p} \leq \sup_{t > 0} \|f(t\cdot) \psi\|_{\Sobolev^n_p}^{1- \vartheta} \cdot \|f(t\cdot) \psi\|_{\Sobolev^{n+1}_p}^{\vartheta} \\
& \lesssim (\frac{\pi}{2} - |\theta|)^{-[n (1 - \vartheta) + (n+1) \vartheta ]} = (\frac{\pi}{2} - |\theta|)^{-\alpha}.
\end{align*}
(2) For $\Re z > 0,$ put $g(z;\lambda) = (1 + |s|\lambda)^{-z} \exp(is\lambda).$
Similarly to (1), it is easy to check that $\|g(n+i\tau;\cdot)\|_{\Hor^n_p} \lesssim \langle \tau \rangle^n$ and $\sup_{\lambda >0;\: k=0,1,\ldots,n} \lambda^k \frac{d^k}{d\lambda^k} |g(n+i\tau;\lambda)| \lesssim \langle \tau \rangle^n$ for $\tau \in \R.$
Then we can deduce from Stein's subexponential complex interpolation \cite[Theorem 1]{Stei} that $\|g(\alpha;\cdot)\|_{\Ha} \lesssim 1$ for general $\alpha.$\\

\noindent
(3) For integer $\alpha \geq 1$ and any $p \in (1,\infty),$ it is easy to check that $\|\lambda \mapsto \lambda^{it} \|_{\Hor^\alpha_p} \lesssim \langle t \rangle^\alpha.$
Likewise, since $\| \lambda \mapsto \lambda^{it} \|_{L^\infty(\R_+)} \lesssim 1,$ we have $\|\lambda \mapsto \lambda^{it} \|_{\Hor^{\frac1q + \epsilon}_q} \lesssim 1$ for any $q \in (1,\infty)$ and $\epsilon > 0.$
Now for $\alpha > 1,$ apply complex interpolation similar to parts (1) and (2), between the cases $\alpha_1 = \lfloor \alpha \rfloor$ and $\alpha_2 = \alpha_1 + 1,$ and for $\alpha < 1,$ between the $\Hor^{\frac1q + \epsilon}_q$ estimate with $q$ close to $\infty$ and the $\Hor^\alpha_p$ estimate with $\alpha = 1$ and $p = q.$\\

\noindent
(4) By Lemma \ref{Lem Elementary Mih Hor} (3), 
$\|(1 - \lambda/u)^{\alpha - 1}_+\|_{\Hor^\beta_1} \cong \|(1-\lambda)^{\alpha-1}_+\|_{\Hor^\beta_1},$ so that it only remains to prove that $(1 - \lambda)^{\alpha-1}_+$ belongs to $\Hor^\beta_1.$
It remains to estimate $\sup_{n \in \Z} \| (1 - 2^n \lambda)^{\alpha - 1}_+ \dyad_0(\lambda) \|_{\Sobolev^\beta_1},$ where $\dyad_0 \in C^\infty_c$ such that $\supp \dyad_0 \subset [\frac12,2].$
Let first $n \leq -2$ and $m$ the least integer greater or equal than $\beta.$
We have 
\begin{align*}
\| (1 - 2^n \lambda)^{\alpha - 1}_+ \dyad_0(\lambda) \|_{\Sobolev^\beta_1} & \leq \| (1 - 2^n \lambda)^{\alpha - 1}_+ \dyad_0(\lambda) \|_{\Sobolev^m_1} \lesssim \max_{k=0,1,\ldots,m} \int_{\frac12}^2 | \frac{d^k}{d \lambda^k} (1 -  2^n \lambda)^{\alpha - 1}| d\lambda \\
& \lesssim \max_{k=0,1,\ldots,m} \int_{\frac12}^2 2^{nk} (1 - 2^n \lambda)^{\alpha - 1 - k} d \lambda.
\end{align*}
Since $n \leq -2,$ we have $\frac12 \leq (1 - 2^n \lambda) \leq 1$ for $\frac12 \leq \lambda \leq 2,$ and $2^{nk} \leq 1,$ so that the above expression is uniformly bounded in $n \leq -2.$
For $n \geq 1,$ we have $(1 - 2^n \lambda)^{\alpha-1}_+ = 0$ for $\lambda \geq \frac12 \geq 2^{-n},$ so that $(1 - 2^n \lambda)^{\alpha - 1}_+ \dyad_0(\lambda) \equiv 0.$
Finally for $n \in \{-1,0\},$ $\| (1 - 2^n \lambda)^{\alpha - 1}_+ \dyad_0(\lambda) \|_{\Sobolev^\beta_1} = \| (1 - 2^{2n} \lambda^2)^{\alpha - 1}_+ \left[ ( 1 + 2^n \lambda )^{1 - \alpha}_+ \dyad_0(\lambda) \right] \|_{\Sobolev^\beta_1}.$
According to \cite[p.~11]{COSY}, one has $(1 - \lambda^2)^{\alpha - 1}_+ \in \Sobolev^\beta_1 \Longleftrightarrow \beta < \alpha,$
and the space $\Sobolev^\beta_1$ is invariant under dilations $f \mapsto f(t\cdot),$ so that the first factor
$(1 - 2^{2n} \lambda^2)_+^{\alpha - 1}$ in the last expression belongs to $\Sobolev^\beta_1.$
Furthermore, the second expression in between the brackets belongs to $C^\infty_c,$ so that the whole term belongs to $\Sobolev^\beta_1.$
The lemma is proven.
\end{proof}

\section{Extended H\"ormander calculus}
\label{sec-extended-Hoermander-calculus}

In statements like \ref{W_alpha}, \ref{BIP_alpha} and \ref{BR_alpha} in the introduction, one would like to give a meaning to ``operators'' $f(A)$ such as $\exp(isA), A^{it}$ and $(1- A/u)_+^\alpha$ before we have established the boundedness of a $\Hor^\alpha_p$ calculus.
In the classical case of a selfadjoint operator on $L^2(U)$ and $X = L^q(U),$ one considers $f(A)$ as defined by functional calculus on $L^2$ and thinks of $f(A)$ on $L^q \cap L^2$ as the part of $f(A)$ on $L^q \cap L^2.$
In the formulas it is then implicitly assumed that the unbounded operator ``$f(A)$'' has a continuous extension to an operator in $B(L^q(U)).$
This assumption is then part of \ref{W_alpha}, \ref{BIP_alpha} and \ref{BR_alpha}.
In the general case of $0$-sectorial operators on a Banach space one has to say unfortunately a little bit more to circumvent this purely formal difficulty.
It is convenient to consider the subspaces $D(\theta) = D(A^\theta) \cap R(A^\theta)$ for $\theta > 0,$
which are Banach spaces with norm $\|x\|_{D(\theta)} = \| \rho^{-\theta}(A)x \|_X$ and the $D(\theta)$ form a decreasing sequence of spaces when $\theta$ grows.
Here $\rho(\lambda) = \lambda ( 1 + \lambda)^{-2}$ belongs together with its powers $\rho^\theta$ to $\HI_0(\Sigma_\omega)$ for any $\omega \in (0,\pi),$ and $R(\rho^\theta(A)) = D(A^\theta) \cap R(A^\theta).$
Note that $D(\theta)$ is dense in $X$ (see \cite[9.4 Proposition (c)]{KuWe} for the case $\theta \in \N$).
Then to make sense of $f(A)$ we will use an ``extended'' version of the the $\HI$ and $\Ha$ calculus which only produces closable operators on $D(\theta).$

For $\omega \in (0,\pi),$ define the algebras of functions $\Hol(\Sigma_\omega) = \{ f : \Sigma_\omega \to \C :\: \exists \: n \in \N :\: \rho^n f \in \HI(\Sigma_\omega) \}.$
For a proof of the following lemma, we refer to \cite[Section 15B]{KuWe} and \cite[p.~91-96]{Haasa}.

\begin{lem}\label{Lem Hol}
Let $A$ be a $0$-sectorial operator.
There exists a linear mapping, called the extended holomorphic calculus,
\begin{equation}\label{Equ Extended HI calculus}
\bigcup_{\omega > 0} \Hol(\Sigma_\omega) \to \{ \text{closed and densely defined operators on }X \},\: f \mapsto f(A)
\end{equation}
extending \eqref{Equ Cauchy Integral Formula} such that for any $f,g \in \Hol(\Sigma_\omega),$ $f(A)g(A)x = (f g)(A)x$ for $x \in \{y \in D(g(A)):\: g(A)y \in D(f(A)) \} \subset D((fg)(A))$ and
$D(f(A)) = \{ x \in X :\: (\rho^n f)(A) x \in D(A^n) \cap R(A^n) = D(n) \},$ where $(\rho^n f)(A)$ is given by \eqref{Equ Cauchy Integral Formula}, i.e. $n \in \N$ is sufficiently large.
\end{lem}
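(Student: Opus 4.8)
The plan is to realize $f(A)$ by the standard \emph{regularization trick}. Given $f \in \Hol(\Sigma_\omega)$, by definition there is $n_0 \in \N$ with $\rho^{n_0} f \in \HI(\Sigma_\omega)$, and then multiplying by one further factor of $\rho$ — recall $\rho(\lambda) = \lambda(1+\lambda)^{-2}$, which obeys $|\rho(\lambda)| \lesssim |\lambda|^1/|1+\lambda|^{2}$ — shows $\rho^n f \in \HI_0(\Sigma_\omega)$ for every $n \geq n_0 + 1$, so that $(\rho^n f)(A) \in B(X)$ is defined by the Cauchy integral \eqref{Equ Cauchy Integral Formula}. Since $A$ is injective (a consequence of $\overline{R(A)} = X$ together with \eqref{Equ Def Sectorial}), the operator $\rho^n(A) \in B(X)$ is injective, hence $\rho^{-n}(A) := \rho^n(A)^{-1}$ is a well-defined closed operator with $D(\rho^{-n}(A)) = R(\rho^n(A)) = D(A^n) \cap R(A^n) = D(n)$. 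I would then simply \emph{define}
\[ f(A) := \rho^{-n}(A) \circ (\rho^n f)(A), \]
so that $D(f(A)) = \{x \in X : (\rho^n f)(A) x \in D(n)\}$ and $f(A) x = \rho^{-n}(A)(\rho^n f)(A) x$, which is exactly the domain description in the statement.

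The first block of checks is the easy one. \emph{Closedness}: if $x_k \to x$ and $f(A) x_k \to y$, then $(\rho^n f)(A) x_k \to (\rho^n f)(A) x$ by boundedness while $\rho^{-n}(A)\bigl((\rho^n f)(A) x_k\bigr) \to y$, so closedness of $\rho^{-n}(A)$ forces $x \in D(f(A))$ and $f(A) x = y$. \emph{Dense domain}: for $x = \rho^{2n}(A) z$ one has, by multiplicativity of the elementary $\HI_0$ calculus, $(\rho^n f)(A) x = (\rho^{3n} f)(A) z = \rho^n(A)\,(\rho^{2n} f)(A) z \in R(\rho^n(A)) = D(n)$, so $D(2n) \subseteq D(f(A))$, and $D(2n)$ is dense in $X$. \emph{Independence of $n$}: if $m > n$ and both $\rho^n f, \rho^m f \in \HI_0(\Sigma_\omega)$, then $(\rho^m f)(A) = \rho^{m-n}(A)(\rho^n f)(A)$ and $\rho^{-m}(A) = \rho^{-n}(A)\,\rho^{-(m-n)}(A)$ as (un)bounded operators, and cancelling $\rho^{-(m-n)}(A)\rho^{m-n}(A)$ — legitimate since $\rho^{m-n}(A)$ is injective — shows the two definitions of $f(A)$ (domain and action) agree. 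The same bookkeeping gives that the extended calculus restricts to \eqref{Equ Cauchy Integral Formula}: if $|f(\lambda)| \lesssim |\lambda|^\epsilon/|1+\lambda|^{2\epsilon}$, then for every $x \in X$ one has $(\rho^n f)(A) x = \rho^n(A) f(A) x \in R(\rho^n(A))$, whence $D(f(A)) = X$ and $\rho^{-n}(A)(\rho^n f)(A) x = f(A) x$. Linearity of $f \mapsto f(A)$ then follows by expressing $f(A)$ and $g(A)$ through a common regularizing power $\rho^n$.

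The remaining, and most delicate, step is the \emph{multiplicativity/composition rule}. Given $f, g \in \Hol(\Sigma_\omega)$ I would pick $n$ large enough that $\rho^n f$, $\rho^n g$ and $\rho^n(fg)$ all lie in $\HI_0(\Sigma_\omega)$, so that $(\rho^{2n} fg)(A) = (\rho^n f)(A)(\rho^n g)(A)$. Then, starting from $x \in D(g(A))$ with $g(A) x \in D(f(A))$, I would substitute $g(A) x = \rho^{-n}(A)(\rho^n g)(A) x$ into $f(A)(g(A) x) = \rho^{-n}(A)(\rho^n f)(A)(g(A)x)$ and push the regularizing factors $\rho^{\pm n}(A)$ through the bounded operators, using injectivity of $\rho^n(A)$ to cancel inverse pairs, until $f(A) g(A) x$ is identified with $\rho^{-2n}(A)(\rho^{2n} fg)(A) x = (fg)(A) x$; along the way one simultaneously reads off that $(\rho^{2n} fg)(A) x \in D(2n)$, i.e. $x \in D((fg)(A))$, which is the claimed inclusion.

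The main obstacle is not any single hard estimate but precisely the domain bookkeeping in this last step: each identity of the form ``$\rho^{-k}(A)$ cancels $\rho^{k}(A)$'' is valid only after one has checked that the vector in question lies in $D(\rho^{-k}(A)) = R(\rho^k(A))$, and the composition rule has to thread a vector through three nested regularizations while tracking which products of $\HI_0$ functions are invoked at each stage. These manipulations are entirely routine but somewhat tedious; they are carried out in full in \cite[Section 15B]{KuWe} and \cite[p.~91--96]{Haasa}, which is why I would ultimately refer the reader there for the complete verification.
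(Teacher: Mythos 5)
Your proposal is correct and coincides with the standard regularization construction $f(A)=\rho^{-n}(A)\circ(\rho^n f)(A)$ that the paper itself does not spell out but delegates to \cite[Section 15B]{KuWe} and \cite[p.~91--96]{Haasa} — the very references you also invoke for the final domain bookkeeping. The checks you do carry out (closedness, density of the domain via $D(2n)\subseteq D(f(A))$, independence of $n$, compatibility with \eqref{Equ Cauchy Integral Formula}, and the outline of the product rule) are all accurate.
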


In an analogous way we introduce an extended $\Ha$ calculus $\Phi_A : \Ha \to B(D(\theta),X)$ for some $\alpha > \frac1p,\: \theta > 0.$
The existence of such a calculus is known in many concrete situations.

For Lemma \ref{Lem auxiliary calculus multiplicative} and the sequel, we need the following notion.

\begin{defi}
Let $\dyad \in C^\infty_c(\R_+).$
Assume that $\supp \dyad \subset [\frac12,2]$ and $\sum_{n=-\infty}^\infty \dyad(2^{-n} t) =1$ for all $t > 0.$
For $n \in \Z,$ we put $\dyad_n = \dyad(2^{-n} \cdot)$ and call $(\dyad_n)_{n \in \Z}$ a dyadic partition of unity.
For the existence of such a partition, we refer to the idea in \cite[Lemma 6.1.7]{BeL}.
\end{defi}

\begin{lem}
\label{Lem auxiliary calculus multiplicative}
Let $A$ be a $0$-sectorial operator on a Banach space $X.$
Assume that one of the following conditions holds.
\begin{enumerate}
\item $\|A^{it}\|$ is polynomially bounded in $t \in \R.$
\item $\| \exp(-zA) \| \leq C \left( \frac{|z|}{\Re z} \right)^\beta$ for some $\beta > 0$ and all $z \in \C_+.$
\item $\| \lambda R(\lambda,A) \| \leq C | \arg(\lambda) |^{-\beta}$ for some $\beta > 0$ and all $\lambda \in \C \backslash [ 0, \infty ).$
\end{enumerate}
Then for $p \in [1,2]$ and some suitable $\alpha > \frac1p$ and $\theta > 0,$ there exists an auxiliary functional calculus $\Phi_A : \Ha \to B(D(\theta),X),$ which is a linear mapping and has the compatibility that for $\omega \in (0,\pi),$ $f \in \HI(\Sigma_\omega) \cap \Ha$ and $x \in D(\theta),$ there holds $\Phi_A(f)x = f(A)x,$ where the right hand side is defined by the holomorphic functional calculus from Lemma \ref{Lem Hol}.
Moreover, $\Phi_A(f)$ is a closable operator over $X.$
We can denote without ambiguity $f(A)$ its closure.
Then we have the further compatibilities
\begin{enumerate}
\item If $f \in \Ha ,\: g \in \HI(\Sigma_\omega)$ and $x \in D(\theta),$ then $f(A) g(A)x = (fg)(A)x.$
\item If $f \in \Ha,$ $g \in \HI_0(\Sigma_\omega)$ and $x \in D(\theta),$ then $g(A) f(A)x = (gf)(A) x.$
\item If $f,g \in \Ha$ and $x \in D(\theta),$ then $g(A)x \in D(f(A))$ and $f(A)g(A)x = (fg)(A)x.$
\end{enumerate}
\end{lem}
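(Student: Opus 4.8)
The plan is to reduce the construction of $\Phi_A$ to an ``$\Sea$-type estimate on smooth pieces'' followed by a dyadic assembly on the space $D(\theta)$. Fix a dyadic partition of unity $(\dyad_n)_{n\in\Z}$; then $f\dyad_n\in C^\infty_c(\R_+)$ for every $f\in\Ha$. First I would show that, under each of the three hypotheses, one can give a meaning to $g(A)\in B(X)$ for $g\in C^\infty_c(\R_+)$, consistently with the extended holomorphic calculus of Lemma \ref{Lem Hol}, and that
\[\|g(A)\|_{B(X)}\lesssim\|g_e\|_{\Soa}\qquad(g\in C^\infty_c(\R_+))\]
for some $p\in[1,2]$ and some $\alpha>\frac1p$, where the size of $\alpha$ is governed by the polynomial order $\beta$ (resp.\ the order of polynomial growth of $\|A^{it}\|$) in the hypothesis; equivalently, that $A$ has a bounded $\Sea$ calculus in the sense of Definition \ref{Def Line calculi}. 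Granting this, I would define, for $f\in\Ha$ and $x\in D(\theta)$,
\[\Phi_A(f)x:=\sum_{n\in\Z}(f\dyad_n)(A)x .\]

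For the analytic input I would argue case by case. Under hypothesis~(1) one uses the Mellin inversion formula $g(A)=\frac{1}{2\pi}\int_\R\widehat{g_e}(s)A^{is}\,ds$ from \eqref{equ-inversion-formulas}: since $\|A^{is}\|\lesssim\langle s\rangle^{c}$ one gets $\|g(A)\|\lesssim\|\langle\cdot\rangle^{c}\widehat{g_e}\|_{L^1}$, and Hausdorff--Young (this is where $p\le 2$ is used) together with H\"older yields $\|\langle\cdot\rangle^{c}\widehat{g_e}\|_{L^1}\lesssim\|g_e\|_{\Soa}$ as soon as $\alpha>c+\frac1p$. Under~(2), resp.~(3), I would instead represent $g(A)$ by a contour integral of $g$ against the sectorial semigroup $(e^{-zA})$, resp.\ by an almost-analytic-extension (Helffer--Sj\"ostrand type) integral against the resolvent, and use $\|e^{-zA}\|\lesssim(\frac{\pi}{2}-|\arg z|)^{-\beta}$, resp.\ $\|\lambda R(\lambda,A)\|\lesssim|\arg\lambda|^{-\beta}$, to obtain the same kind of estimate (with $\alpha$ large enough in terms of $\beta$). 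With this in hand, given $x\in D(\theta)$ I write $x=\rho^\theta(A)y$ with $\|y\|_X=\|x\|_{D(\theta)}$ (recall $\rho(\lambda)=\lambda(1+\lambda)^{-2}$ and $D(\theta)=R(\rho^\theta(A))$); since the calculus on $C^\infty_c(\R_+)$ is multiplicative, $(f\dyad_n)(A)\rho^\theta(A)=(f\dyad_n\rho^\theta)(A)$, hence
\[\|(f\dyad_n)(A)x\|_X\lesssim\|(f\dyad_n\rho^\theta)_e\|_{\Soa}\,\|x\|_{D(\theta)} .\]
Now $(f\dyad_n\rho^\theta)_e=f_e\cdot(\dyad_n\rho^\theta)_e$ is supported in an interval of length $O(1)$ about $n\log 2$: on it Lemma \ref{Lem Classical and modern Hoermander condition} bounds the $\Soa$-norm of $f_e$ by $\|f\|_{\Ha}$, while $\rho^\theta(\lambda)\lesssim\min(\lambda,\lambda^{-1})^\theta\lesssim2^{-|n|\theta}$ on $\supp\dyad_n$, together with the scaling, gives $\|(\dyad_n\rho^\theta)_e\|_{C^{\lceil\alpha\rceil}}\lesssim2^{-|n|\theta}$. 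As $\Soa$ is a translation-invariant Banach algebra for $\alpha>\frac1p$, this yields $\|(f\dyad_n\rho^\theta)_e\|_{\Soa}\lesssim2^{-|n|\theta}\|f\|_{\Ha}$, and summing the resulting geometric series over $n\in\Z$ shows $\Phi_A(f)\in B(D(\theta),X)$ with $\|\Phi_A(f)\|\lesssim\|f\|_{\Ha}$; linearity in $f$ is immediate.

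Next I would verify the compatibility with the holomorphic calculus and closability. For $f\in\HI(\Sigma_\omega)\cap\Ha$ and $x=\rho^\theta(A)y\in D(\theta)$ the partial sums are $\sum_{|n|\le N}(f\dyad_n)(A)=(\chi_N f)(A)$, where $\chi_N=\sum_{|n|\le N}\dyad_n$ is a smooth cut-off; multiplying by $\rho^\theta(A)$ and using $f\rho^\theta\in\HI_0(\Sigma_\omega)\subset\Sea$ together with $\chi_N f\rho^\theta\to f\rho^\theta$ in $\Sea$, one obtains $(\chi_N f)(A)\rho^\theta(A)y\to(f\rho^\theta)(A)y$, and the composition rule of Lemma \ref{Lem Hol} gives $(f\rho^\theta)(A)y=f(A)\rho^\theta(A)y=f(A)x$; hence $\Phi_A(f)x=f(A)x$. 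For closability, fix $m\in\N$ with $\rho^m f\in\Sea$ for all $f\in\Ha$ (using $\Ha\hookrightarrow L^\infty(\R_+)$ and the exponential decay of $(\rho^m)_e$). As above one checks $\rho^m(A)\Phi_A(f)x=(\rho^m f)(A)x$ and that this vector lies in $R(\rho^m(A))=D(A^m)\cap R(A^m)$, so $\Phi_A(f)x=\rho^{-m}(A)(\rho^m f)(A)x$ for $x\in D(\theta)$; since $\rho^{-m}(A)$ is closed (Lemma \ref{Lem Hol}) and $(\rho^m f)(A)$ is bounded, $\Phi_A(f)$ is the restriction to the dense subspace $D(\theta)$ of the closed operator $\rho^{-m}(A)(\rho^m f)(A)$, hence closable; write $f(A)$ for its closure. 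The compatibilities (a)--(c) then follow from the corresponding identities for the smooth pieces $f\dyad_n$ and the composition rule, by passing to the limit in $\chi_N$ exactly as in the compatibility step (inserting $\rho^\theta$ for~(c) to land in $D(\theta)$).

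The hard part will be the analytic input of the second paragraph: producing the single uniform bound $\|g(A)\|_{B(X)}\lesssim\|g_e\|_{\Soa}$ out of three rather different hypotheses, i.e.\ choosing for each the appropriate representation of $g(A)$ (Mellin, semigroup, or resolvent against an almost-analytic extension), matching the parameters $p\le 2$, $\alpha$ and $\theta$ to the polynomial order in the hypothesis, and checking that the chosen representation really reproduces $g(A)$ and is consistent with Lemma \ref{Lem Hol}. Once this estimate is available, the dyadic assembly, the compatibility, and the closability are routine bookkeeping with the composition rule of the (extended) holomorphic calculus.
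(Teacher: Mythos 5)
Your proposal follows essentially the same route as the paper: first establish an auxiliary $\Sea$-type calculus on compactly supported (or Sobolev) functions via the Mellin, semigroup, or resolvent representation formulas --- which is exactly what the paper's cited references and its Remark \ref{rem-semigroup-auxiliary-calculus} do --- and then pass from $\Sea$ to $\Ha$ by the dyadic decomposition, using the geometric decay $\|\rho^{\nu}\widetilde{\dyad_k}\|_{\Sea}\lesssim 2^{-|k|\nu}$ to sum the series and absorbing $\rho^{\nu}$ into the $D(\theta)$-norm. The only structural difference is that the paper outsources the $\Sea$-level construction to external references whereas you sketch it directly; your dyadic assembly, compatibility check, and closability argument via $\Phi_A(f)\subset\rho^{-m}(A)\circ(\rho^m f)(A)$ all match the paper's reasoning.
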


\begin{proof}
The Lemma is proved for the case $p = 2$ and a mapping $\Phi_A^{\Sea} : \Sea \to B(D(\theta),X),$ for the imaginary powers in \cite{KrW2}, for the resolvents in \cite[Proposition 3.9]{KrW1}, and for the semigroup, see this paper at Remark \ref{rem-semigroup-auxiliary-calculus}.
The case of general $p$ is entirely similar.
To pass from $\Sea$ functions to $\Ha$ functions, note that for $f \in \Ha$ and $\nu > 0,$ we have
$f \rho^\nu \in \Sea.$
Indeed, with $(\dyad_k)_{k \in \Z}$ a dyadic partition of unity and $\widetilde{\dyad_k} = \dyad_{k-1} + \dyad_k + \dyad_{k+1},$ we have
\begin{align*}
\| f \rho^\nu \|_{\Sea} & \leq \sum_{k \in \Z} \| f \rho^\nu \dyad_k \|_{\Sea} \\
& \lesssim \sum_{k \in \Z} \| f \dyad_k \|_{\Sea} \| \rho^{\nu} \widetilde{\dyad_k} \|_{\Sea} \\
& \lesssim \| f \|_{\Ha} \sum_{k \in \Z} 2^{-|k| \nu} \lesssim \|f \|_{\Ha}.
\end{align*}
Now suppose that $\Phi_A^{\Sea} : \Sea \to B(D(\theta),X)$ is an auxiliary calculus as above.
Let $\theta' > \theta$ and set $\nu = \theta' - \theta > 0.$
Then $\Phi_A : \Ha \to B(D(\theta'),X),\: f \mapsto (x = \rho^\nu(A) y \mapsto \Phi_A^{\Sea}(f \rho^\nu)y)$ is the desired auxiliary calculus of the lemma, where $y \in D(\theta).$
Now it is easy to check that the already established compatibilities of $\Phi_A^{\Sea}$ carry over to $\Phi_A.$
\end{proof}

In some cases, it is convenient to consider an even smaller domain of definition than $D(\theta):$
For a $0$-sectorial operator $A$ with auxiliary functional calculus $\Phi_A$ as above, we define the following subset $D_A$ of $X.$
Let $(\dyad_n)_{n \in \Z}$ be a dyadic partition of unity.
\begin{equation}
\label{Equ D}
D_A = \left\{\sum_{n = -N}^N \dyad_n(A) x:\: N \in \N,\: x \in D(\theta) \right\}.
\end{equation}
We call $D_A$ the calculus core of $A.$
According to \cite{KrW2}, $D_A$ is dense in $X.$

As for the $\HI$ calculus, there is an extended $\Ha$ calculus which is defined for $f : (0,\infty) \to C$ with $f \rho^\nu = f(\cdot) (\cdot)^\nu ( 1 + (\cdot))^{-2\nu} \in \Ha$ for some $\nu > 0,$ as a counterpart of \eqref{Equ Extended HI calculus}.

\begin{defi}\label{Def Unbounded Besov or Sobolev calculus}
Let $A$ have an auxiliary calculus $\Phi_A:\Ha \to B(D(\theta),X).$
Let $f : (0,\infty) \to \C$ with $f \rho^\nu \in \Ha$ for some $\nu > 0.$
We define the operator $f(A)$ on $D_A$ by
\[ f(A) (\sum_{n = -N}^N \dyad_n(A)x) = \sum_{n = -N}^N (f \dyad_n)(A)x.\]
Note that this definition does not depend on the representation $\sum_{n = -N}^N \dyad_n(A) x$ of the element in $D_A.$
\end{defi}

\begin{lem}\label{Lem Soaloc calculus}
Let $1 \leq p < \infty$ and $A$ have an auxiliary calculus $\Phi_A: \Ha \to B(D(\theta),X)$ such that $\Phi_A(f)x = f(A)x$ for $x \in D(\theta)$ and $f \in \HI(\Sigma_\omega) \cap \Ha.$
Assume that $f \rho^\nu \in \Ha$ for some $\nu > 0,$ and $g$ a further function, where we suppose the same assumptions as $f.$
\begin{enumerate}
\item[(a)] The operator $f(A)$ is closable.
We denote the closure by slight abuse of notation again by $f(A).$
\item[(b)] If furthermore $f \in \Ha$ then $f(A)$ coincides with the operator defined by the calculus $\Phi_A.$
If $f \in \Hol(\Sigma_\omega)$ for some $\omega \in (0, \pi),$ then
$f(A)$ coincides with the (unbounded) holomorphic calculus of $A.$
\item[(c)] For any $x \in D_A,$ we have $g(A)x \in D(f(A))$ and $f(A)g(A)x = (fg)(A)x.$
\end{enumerate}
\end{lem}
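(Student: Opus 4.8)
The plan is to reduce every assertion to the corresponding property of the auxiliary calculus $\Phi_A$ on $\Ha$, using the dyadic partition of unity $(\dyad_n)_{n \in \Z}$ and the decay estimate $\|f\dyad_n\|_{\Ha}$ that comes from $f\rho^\nu \in \Ha$. First I would record the key bookkeeping fact: if $f\rho^\nu \in \Ha$, then for each $n$ the function $f\dyad_n$ lies in $\Ha$ (indeed in $\Sea$ for any $\alpha$, hence corresponds to a bounded operator via $\Phi_A$), because $f\dyad_n = (f\rho^\nu)\cdot(\rho^{-\nu}\dyad_n)$ and $\rho^{-\nu}\dyad_n$ is a $C^\infty_c(\R_+)$-function supported in $[2^{n-1},2^{n+1}]$, so $\|f\dyad_n\|_{\Ha} \lesssim \|f\rho^\nu\|_{\Ha}\,\|\rho^{-\nu}\dyad_n\|_{\Ha} \lesssim 2^{|n|\nu}\|f\rho^\nu\|_{\Ha}$ by the Banach-algebra property of $\Ha$ together with dilation-invariance of its norm (Lemma \ref{Lem Elementary Mih Hor}(1),(3)). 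This makes $f(A)$ well defined on $D_A$ via Definition \ref{Def Unbounded Besov or Sobolev calculus}: on an element $\sum_{n=-N}^N \dyad_n(A)x$ the sum $\sum_{n=-N}^N (f\dyad_n)(A)x$ is finite and each term is meaningful. Representation-independence follows from the compatibility $\dyad_m(A)\dyad_n(A)=0$ for $|m-n|\ge 2$ and the multiplicativity in Lemma \ref{Lem auxiliary calculus multiplicative}(3), exactly as asserted in the cited Definition.

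For (a), closability: I would use a duality/consistency argument. Since $D_A$ is dense in $X$ and $f(A)$ agrees on the range of each $\dyad_n(A)$ with a bounded operator $(f\dyad_n)(A)$, one checks that if $x_k \to 0$ in $D_A$ and $f(A)x_k \to y$, then testing against elements of $D_{A'}$ (or against $(\dyad_m f)(A')$-images) forces $y=0$; the point is that $\dyad_m(A)f(A)x_k = (\widetilde{\dyad_m}\dyad_m f)(A)x_k$ is a bounded operator applied to $\dyad_m$-localized pieces of $x_k$, which go to $0$. Since $\dyad_m(A)y \to y$ after summation over $m$ on a dense set, $y=0$. Thus $f(A)$ is closable and we denote its closure again $f(A)$.

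For (b), consistency: if $f \in \Ha$ itself, then $(f\dyad_n)(A) = f(A)\dyad_n(A)$ on $D(\theta)$ by Lemma \ref{Lem auxiliary calculus multiplicative}(1) (applied with $g = \dyad_n \in \HI(\Sigma_\omega)\cap\Ha$), so on $D_A$ the operator from Definition \ref{Def Unbounded Besov or Sobolev calculus} equals $\Phi_A(f)|_{D_A}$; since $\Phi_A(f)$ is already closed (being bounded, or its closure the auxiliary-calculus operator) and $D_A$ is a core, the closures coincide. If $f \in \Hol(\Sigma_\omega)$, the same computation with the extended holomorphic calculus of Lemma \ref{Lem Hol}, using $(f\dyad_n)(A) = f(A)\dyad_n(A)$ there and the fact that $\dyad_n(A)x \in D(f(A))$ for $x \in D(\theta)$ with $\sum_n \dyad_n(A)x$ summing back to $x$ in the graph norm on a dense set, identifies $f(A)$ on $D_A$ with the holomorphic one; closability and closure then match. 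For (c): for $x \in D_A$, say $x = \sum_{n=-N}^N \dyad_n(A)z$ with $z \in D(\theta)$, one has $g(A)x = \sum_n (g\dyad_n)(A)z$; I would rewrite each $(g\dyad_n)(A)z = (g\widetilde{\dyad_n}\dyad_n)(A)z = (g\widetilde{\dyad_n})(A)\dyad_n(A)z$, recognize $(g\widetilde{\dyad_n})(A)\dyad_n(A)z \in D_A$-type pieces (using that $g\widetilde{\dyad_n} \in \Ha$ and $\dyad_n(A)z \in R(\dyad_n(A))$), and then apply $f(A)$ termwise, using multiplicativity $f(A)(g\widetilde{\dyad_n})(A) = (fg\widetilde{\dyad_n})(A)$ on the relevant range and reassembling $\sum_n (fg\dyad_n)(A)z = (fg)(A)x$. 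The main obstacle I expect is purely organizational: keeping careful track of which multiplicativity statement from Lemma \ref{Lem auxiliary calculus multiplicative} (parts (1),(2),(3)) applies on which domain at each splitting, and verifying that the telescoping over the finite index range $-N,\dots,N$ together with the neighbor-function $\widetilde{\dyad_n}$ does not create boundary terms — this is where one must be scrupulous, but no genuinely new estimate is needed beyond what Lemma \ref{Lem auxiliary calculus multiplicative} already provides.
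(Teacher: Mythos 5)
Your proposal takes a genuinely different route from the paper, but only in a bureaucratic sense: the paper's entire proof of this lemma is a citation (``this is proved in \cite{KrW2} for $p=2$ and $\Sea$ in place of $\Ha$''), followed by the reduction that $f\rho^\nu \in \Ha$ implies $f\rho^{\nu'} \in \Sea$ for $\nu' > \nu$, so that the $\Ha$ case follows as in the proof of Lemma \ref{Lem auxiliary calculus multiplicative}. Your self-contained argument via the dyadic decomposition, the bound $\|f\dyad_n\|_{\Ha} \lesssim 2^{|n|\nu}\|f\rho^\nu\|_{\Ha}$, and the multiplicativity rules of Lemma \ref{Lem auxiliary calculus multiplicative} is exactly the kind of argument the citation hides, and its overall architecture is sound.

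Two steps, however, are not yet proofs. First, the closability endgame: from $x_k \to 0$ and $f(A)x_k \to y$ you correctly get $\dyad_m(A)y=0$ for every $m$, but ``$\dyad_m(A)y \to y$ after summation over $m$ on a dense set'' does not conclude anything about the \emph{fixed} vector $y$. The clean finish is to write $\rho^{\theta'}(A)y = \sum_m (\rho^{\theta'}\dyad_m)(A)y$ (a norm-convergent sum of $\HI_0$-calculus operators), observe that each summand equals $(\rho^{\theta'}\widetilde{\dyad_m})(A)\dyad_m(A)y = 0$, and invoke injectivity of $\rho^{\theta'}(A)$ for the injective sectorial operator $A$. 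Second, in (b) you assert that ``$D_A$ is a core'' for $\Phi_A(f)$, and in (c) you need $g(A)x \in D(f(A))$, i.e.\ membership in the domain of a \emph{closure}; both require graph-norm convergence of $\sum_{|n|\le k}\dyad_n(A)x \to x$ for $x \in D(\theta)$ together with convergence of the images under $\Phi_A(f)$. The obvious estimate $\|x - \sum_{|n|\le k}\dyad_n(A)x\| \le C\,\|1-\sum_{|n|\le k}\dyad_n\|_{\Ha}\,\|x\|_{D(\theta)}$ fails, because the $\Ha$-norm of the tail does \emph{not} tend to $0$; one must instead factor through $x=\rho^\theta(A)w$ and exploit the geometric decay $\|\rho^\theta\dyad_n\|_{\Sea} \lesssim 2^{-|n|\theta}$ on dyadic blocks, exactly as in the displayed computation in the proof of Lemma \ref{Lem auxiliary calculus multiplicative}. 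These convergence facts (including the density of $D_A$) are precisely what the paper outsources to \cite{KrW2}; without supplying them your parts (b) and (c) remain assertions rather than proofs.
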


\begin{proof}
This is proved in \cite{KrW2} in the case $p = 2$ and for $\Sea$ in place of $\Ha.$
Then the case $p \in [1,2]$ is entirely similar, and for the $\Ha$ case, we can proceed as in the proof of Lemma \ref{Lem auxiliary calculus multiplicative} noting that $f \rho^\nu \in \Ha$ implies $f \rho^{\nu'} \in \Sea$ for $\nu' > \nu > 0.$
\end{proof}

Assume that $A$ has a $\Ha$ calculus in the sense of Definition \ref{Def Hor calculus}.
Then it has an auxiliary functional calculus as in Lemma \ref{Lem Soaloc calculus} and the estimate 
\eqref{Equ Def Hor sect calculus} extends to all of $f \in \Ha.$

The following lemma gives several representation formulas for the $\Sea$ calculus and the auxiliary $\Ha$ calculus, in terms of the $C_0$-group $A^{it},$ the wave group $e^{itA}$ of (in general) unbounded operators and the Bochner-Riesz means $(1 - A/u)^{\alpha - 1}_+.$

\begin{lem}\label{Lem Sobolev calculus}
Let $X$ be a Banach space with dual $X',$ and let $p \in [1,2].$
Let $\alpha > \frac1p,$ so that $\Soa$ is a Banach algebra.
Let $A$ be a $0$-sectorial operator with imaginary powers $A^{it}.$
\begin{enumerate}
\item
Assume that for some $C > 0$ and all $x \in X,\,x'\in X'$
\begin{equation}\label{Equ group weak L2}
\left( \int_\R |\tma \spr{A^{it}x}{x'}|^p dt \right)^{1/p} \leq C \| x \| \, \| x' \|.
\end{equation}
Then $A$ has a bounded $\Sea$ calculus.
Moreover, for any $f \in \Sea,$ $f(A)$ is given by
\[ \spr{f(A) x}{x'} = \frac{1}{2\pi} \int_\R (f_e)\hat{\phantom{i}}(t) \spr{A^{it}x}{x'} dt \quad (x \in X,\: x' \in X'). \]
The above integral exists as a strong integral if moreover $\tma A^{it} x \in L^p(\R;X).$
\item Conversely, if $p = 2$ and $A$ has a $\Sobexp^\alpha_2$ calculus, then \eqref{Equ group weak L2} holds.
\item Assume that $A$ has an auxiliary calculus $\Phi_A : \Hor^\gamma_p \to B(D(\theta),X)$ for some $\theta \geq 0$ and $\gamma > \frac1p \geq \frac12.$
Then for $x$ belonging to the calculus core $D_A$ of $A$ and for $f \in C^\infty_c(0,\infty),$
\[ f(A) x = \frac{1}{2\pi} \int_\R \hat{f}(t) e^{itA} x dt. \]
\item Assume that $1 < \beta < \alpha$ and that $A$ has an auxiliary calculus $\Phi_A : \Hor^\beta_1 \to B(D(\theta),X)$ for some $\theta \geq 0$ and an $\HI(\Sigma_\omega)$ calculus for some $\omega \in (0,\frac{\pi}{2}).$
Then the Bochner-Riesz means $R^{\alpha - 1}_u(A)$ are densely defined closed operators, where $R^{\alpha - 1}_u(\lambda) = (1 - \lambda/u)_+^{\alpha - 1} \in \Hor^\beta_1$ and $t_+ = \max(t,0).$
Moreover, for $f \in C^\infty_c(\R_+)$ with $\supp f \subset [\frac12,2]$ and $x \in D_A,$ one has
\[ f(A) x = \frac{(-1)^m}{\Gamma(\alpha)} \int_0^\infty f^{(\alpha)}(u) u^{\alpha - 1} R^{\alpha - 1}_u(A) x du,\]
where $f^{(\alpha)}$ is defined e.g. by $\hat{f^{(\alpha)}}(\xi) = (-i \xi)^\alpha \hat{f}(\xi),\: \xi \in \R,$ and $m = \lfloor \alpha \rfloor.$

The same formula holds if $X = L^p(U,\mu)$ for some $1 < p < \infty,$ $A$ is self-adjoint on $L^2(U,\mu),$ and $R^{\alpha-1}_u(A)$ is densely defined on $L^p$ by the self-adjoint spectral calculus for $x \in L^2 \cap L^p.$
\end{enumerate}
\end{lem}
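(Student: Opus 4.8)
The four parts follow a common template: verify the relevant scalar identity on $(0,\infty)$, bound $f(A)$ by a H\"older (in part (2), a duality) argument, and then lift the identity to a Bochner integral through the $\Sobexp^\alpha_p$- or the auxiliary $\Hor^\gamma_p$-calculus on the dense sets $D(\theta)$, resp.\ $D_A$. For part (1), for $f\in\bigcap_{0<\omega<\pi}\HI(\Sigma_\omega)\cap\Sobexp^\alpha_p$ with enough decay to apply \eqref{Equ Cauchy Integral Formula} I would start from the Mellin inversion formula $\spr{f(A)x}{x'}=\frac{1}{2\pi}\int_\R\widehat{f_e}(t)\,\spr{A^{it}x}{x'}\,dt$; this is the classical identity $f(A)=\frac{1}{2\pi}\int_\R\widehat{f_e}(t)A^{it}\,dt$ (valid since \eqref{Equ group weak L2} forces the $A^{it}$ to be bounded operators, necessarily of at most exponential growth, so that the integral converges strongly for such $f$). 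Splitting $|\widehat{f_e}(t)|=\bigl(\langle t\rangle^\alpha|\widehat{f_e}(t)|\bigr)\cdot\langle t\rangle^{-\alpha}$ and applying H\"older with exponents $p'$ and $p$, the $\langle t\rangle^{-\alpha}$-factor contributes $\le C\|x\|\,\|x'\|$ by \eqref{Equ group weak L2}, while $\|\langle t\rangle^\alpha\widehat{f_e}\|_{L^{p'}}\lesssim\|(\langle t\rangle^\alpha\widehat{f_e})\ck\|_{L^p}\cong\|f\|_{\Sobexp^\alpha_p}$ by Hausdorff--Young (using the equivalence of the weights $\langle t\rangle$ and $1+|t|$) --- this is the only place $p\le2$ enters. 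This gives $\|f(A)\|\lesssim\|f\|_{\Sobexp^\alpha_p}$ on the dense subalgebra, hence a bounded $\Sobexp^\alpha_p$ calculus by Definition \ref{Def Line calculi}, and the representation formula for a general $f\in\Sobexp^\alpha_p$ follows by approximating $f$ with holomorphic functions $f_n$ as in Lemma \ref{Lem HI dense in diverse spaces} and passing to the limit with the same H\"older estimate applied to $f-f_n$. Under the extra hypothesis $\tma A^{it}x\in L^p(\R;X)$, the identical bound with $\|\cdot\|_X$ in place of the pairing shows the integral converges absolutely in $X$.

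For part (2), I would run this computation in reverse. Fix $x\in D(\theta)$ (a dense set, on which each $A^{it}$ is defined by the extended holomorphic calculus and $\|A^{it}x\|\lesssim_x e^{c|t|}$) and $x'\in X'$; for $g\in C_c(\R)$ and $n\in\N$ let $f_{g,n}\in\bigcap_\omega\HI(\Sigma_\omega)\cap\Sobexp^\alpha_2$ be the holomorphic function of Lemma \ref{Lem HI dense in diverse spaces} with $\widehat{(f_{g,n})_e}=\langle\cdot\rangle^{-\alpha}g$ truncated to $|t|\le2^n$. Then $\|f_{g,n}\|_{\Sobexp^\alpha_2}\lesssim\|g\|_2$ uniformly in $n$ by Plancherel, the classical Mellin representation $\spr{f_{g,n}(A)x}{x'}=\frac{1}{2\pi}\int_{|t|\le2^n}\langle t\rangle^{-\alpha}g(t)\,\spr{A^{it}x}{x'}\,dt$ holds for the holomorphic $f_{g,n}$, and $|\spr{f_{g,n}(A)x}{x'}|\le\|f_{g,n}(A)\|\,\|x\|\,\|x'\|\lesssim\|g\|_2\,\|x\|\,\|x'\|$ by the bounded $\Sobexp^\alpha_2$ calculus; letting $g$ run over a dense subset of $L^2(\R)$ and applying the Riesz representation theorem bounds the $L^2(\R)$-norm of $\langle t\rangle^{-\alpha}\spr{A^{it}x}{x'}$ over $|t|\le2^n$ by $C\|x\|\,\|x'\|$ uniformly in $n$, and Fatou's lemma gives \eqref{Equ group weak L2} (first for $x\in D(\theta)$, then for all $x$ by density).

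Parts (3) and (4) share a lifting scheme. First, the relevant family of functions lies ``locally'' in the H\"ormander class: for (3), since $\dyad_n=\dyad(2^{-n}\cdot)$ the dilation invariance of Lemma \ref{Lem Elementary Mih Hor}(3) gives $\|e^{it\cdot}\dyad_n\|_{\Hor^\gamma_p}=\|e^{i2^n t\cdot}\dyad\|_{\Hor^\gamma_p}\lesssim\langle 2^n t\rangle^\gamma$ (the last bound elementary), so $e^{it\cdot}\rho^\nu\in\Hor^\gamma_p$ for $\nu\ge\gamma$ and $e^{itA}x$ is well defined for $x\in D_A$ by Definition \ref{Def Unbounded Besov or Sobolev calculus}; for (4), Lemma \ref{Lem Mihlin norms of functions}(4) gives $R^{\alpha-1}_u\in\Hor^\beta_1$ with $\sup_{u>0}\|R^{\alpha-1}_u\|_{\Hor^\beta_1}<\infty$, so $\Phi_A(R^{\alpha-1}_u)\in B(D(\theta),X)$ and $R^{\alpha-1}_u(A)$ is closed and densely defined by Lemma \ref{Lem Soaloc calculus}(a). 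Second, the scalar inversion identities $f(\lambda)=\frac{1}{2\pi}\int_\R\widehat f(t)e^{it\lambda}\,dt$ (for $f\in C^\infty_c(0,\infty)$, $\widehat f$ Schwartz) and $f(\lambda)=\frac{(-1)^m}{\Gamma(\alpha)}\int_0^\infty f^{(\alpha)}(u)\,u^{\alpha-1}R^{\alpha-1}_u(\lambda)\,du$ (the Weyl-type formula, together with $f^{(\alpha)}(u)=O(u^{-\alpha-1})$ as $u\to\infty$, which comes from the branch singularity of $\xi\mapsto(-i\xi)^\alpha$ at $0$) make the $X$-valued integrals $\int_\R\widehat f(t)e^{itA}x\,dt$ and $\int_0^\infty f^{(\alpha)}(u)u^{\alpha-1}R^{\alpha-1}_u(A)x\,du$ absolutely convergent for $x=\sum_{|n|\le N}\dyad_n(A)y\in D_A$ (using $\|e^{itA}x\|\lesssim\sum_{|n|\le N}\langle 2^n t\rangle^\gamma\|y\|$, resp.\ $\|R^{\alpha-1}_u(A)x\|\lesssim_x1$). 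Finally, since $f\in C^\infty_c(0,\infty)$ lies in $\Hor^\gamma_p$ (resp.\ $\Hor^\beta_1$), $f(A)x=\sum_n\Phi_A(f\dyad_n)y$, and because $\Phi_A$ is a bounded linear map into $B(D(\theta),X)$ it may be interchanged with the $\Hor^\gamma_p$-valued (resp.\ $\Hor^\beta_1$-valued) Bochner integral, which by the scalar identity above equals $f\dyad_n$; summing over $|n|\le N$ yields the two stated formulas. In the self-adjoint case $X=L^p(U,\mu)$ one uses instead that the auxiliary calculus and the $L^2$-spectral calculus agree on $L^2\cap L^p$ (both extend the holomorphic calculus of $A$), so the same computation applies.

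The delicate points I anticipate lie in parts (3) and (4): one must ensure that the H\"ormander-class-valued Bochner integrals converge with exactly the norm control needed (polynomial in $|t|$ in (3), uniform in $u$ in (4)), that the bounded linear map $\Phi_A$ may genuinely be interchanged with them, and that the pointwise Fourier-inversion, resp.\ fractional-integration, identity is transported correctly through the auxiliary calculus on $D_A$; in particular, checking that $e^{it\cdot}\dyad_n$ and $R^{\alpha-1}_u\dyad_n$ carry the claimed (polynomial, resp.\ uniformly bounded) H\"ormander norms is the computational core. Parts (1)--(2) are comparatively routine once the Mellin inversion formula is available, the only real constraint being the appeal to Hausdorff--Young, which confines us to $p\le2$.
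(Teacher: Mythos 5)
Your proposal is correct and follows essentially the same route as the paper: parts (1)–(2) via the Cauchy integral formula combined with Mellin/Fourier inversion, H\"older and Hausdorff--Young (resp.\ an $L^2$-duality argument), and parts (3)–(4) by reading the scalar Fourier-inversion and fractional-integration identities as Bochner integrals in $\Sobexp^\gamma_p$ resp.\ $\Hor^\beta_1$ after multiplying by a cutoff $\phi$ with $x=\phi(A)y$, then pushing the bounded auxiliary calculus through the integral. The only cosmetic differences are that the boundedness of the $A^{it}$ is part of the hypothesis rather than a consequence of \eqref{Equ group weak L2}, and that in (4) the paper uses the vanishing of the Cossar--Riemann--Liouville derivative $f^{(\alpha)}$ on $[r,\infty)$ (rather than its decay) to control the integral, but neither affects the argument.
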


\begin{proof}
(1) and (2) This can be proved with the Cauchy integral formula \eqref{Equ Cauchy Integral Formula} in combination with the Fourier inversion formula, see \cite[Proposition 4.22]{Kr}.\\

\noindent
(3) Let $\phi \in C^\infty_c(0,\infty).$
By the Fourier inversion formula, one has
\begin{equation}\label{Equ Proof Fourier inversion Formula}
f(\lambda) \phi(\lambda) = \frac{1}{2\pi} \int_\R \hat{f}(t) e^{it\lambda} \phi(\lambda) dt,
\end{equation}
which is in particular a Bochner integral formula with values in the space $\Hor^\gamma_p,$ since $\|\lambda \mapsto e^{it\lambda} \phi(\lambda)\|_{\Sobexp^\gamma_p} \lesssim \langle t \rangle^\gamma$ and $\hat{f}(t)$ is rapidly decreasing.
If $x = \sum_{n = - N}^N \dyad_n(A) y \in D_A,$ then let $\phi = \sum_{n = -N}^N \dyad_n.$
Applying the auxiliary $\Hor^\gamma_p$ calculus on both sides of \eqref{Equ Proof Fourier inversion Formula} yields (3).\\

\noindent
(4) For $f$ as above and $\phi \in C^\infty_c(0,\infty)$, one has the representation formula
\begin{equation}\label{Equ Bochner-Riesz}
f(s) \phi(s) = \frac{(-1)^m}{\Gamma(\alpha)} \int_s^\infty (t-s)^{\alpha - 1} f^{(\alpha)}(t) \phi(s) dt = \frac{(-1)^m}{\Gamma(\alpha)} \int_0^\infty (t-s)_+^{\alpha - 1} f^{(\alpha)}(t) \phi(s) dt, \: s > 0,
\end{equation}
where $m = \lfloor \alpha \rfloor$ \cite[p.~1011]{GaMi}.
This implies that if $f(s) = 0$ for $s \geq r,$ then $f^{(\alpha)}(s) = 0$ for $s \geq r.$
It is now easy to check that the integral formula \eqref{Equ Bochner-Riesz} holds as a Bochner integral in $\Hor^\beta_1,$ and applying the auxiliary $\Hor^\beta_1$ calculus yields with $\phi \in C^\infty_c(0,\infty)$ such that $x = \phi(A)y,$ $y \in D(\theta),$
\[ f(A)x = (f\phi)(A)y = \frac{(-1)^m}{\Gamma(\alpha)} \int_0^\infty f^{(\alpha)}(t) (t-A)_+^{\alpha - 1} \phi(A) y dt =
\frac{(-1)^m}{\Gamma(\alpha)} \int_0^\infty f^{(\alpha)}(u) u^{\alpha - 1} R_u^{\alpha - 1}(A)x dt.\]
The case $X = L^p(U,\mu),$ $A$ self-adjoint on $L^2$ and $x \in L^p(U) \cap L^2(U)$ also follows from \eqref{Equ Bochner-Riesz}, regarded as a Bochner integral in $\mathcal{B}^\infty(\R_+)$ by applying the self-adjoint calculus.
\end{proof}

\section{The Localization Principle and $\Ha$ calculus}\label{Sec 5 Mihlin}


In this section we reduce the problem of showing the boundedness of the $\Ha$ calculus to boundedness on functions with compact support, by a localization principle.
We are going to show that in the presence of some $\HI$ calculus, an $R$-bounded $\Sea$ calculus can be improved to an ($R$-bounded) $\Ha$ calculus.
This replaces the Littlewood-Paley arguments in the proof of classical spectral multiplier theorems.
Indeed, a somewhat weaker assumption than $R$-bounded $\Sea$ calculus suffices, as is shown in the following theorem.

\begin{thm}\label{Thm Localization Principle}
Let $A$ be a $0$-sectorial operator having an $\HI(\Sigma_\sigma)$ calculus for some $\sigma \in (0,\pi)$ and assume that $X$ has property $(\alpha).$
Consider the following conditions for $p \in [1,\infty)$ and $\alpha > \frac1p.$
\begin{enumerate}
\item $A$ has an $R$-bounded $\Sea$ calculus.
\item $A$ has an auxiliary calculus $\Phi_A : \Hor^\beta_p \to B(D(\theta),X)$ for some $\beta,\theta > 0,$ (so that $f(2^nA)$ is well-defined for $f \in C^\infty_c(\R_+)$ and $n \in \Z$), and we have
\begin{equation}\label{Equ Localized R-bound}
\left\{ f(2^n A):\: f \in C^\infty_c(\R_+),\: \supp f \subset [\frac12, 2],\: \|f\|_{\Sobolev^\alpha_p} \leq 1,\: n \in \Z \right\} \text{ is }R\text{-bounded.}
\end{equation}
\item $A$ has an $R$-bounded $\Ha$ calculus.
\end{enumerate}
Then all these conditions are equivalent.
If $X$ does not have property $(\alpha),$ but $A$ is $R$-sectorial, then one still has (1) $\Longrightarrow$ (2) $\Longrightarrow$ (3'), with (3'): $A$ has a bounded $\Ha$ calculus.
\end{thm}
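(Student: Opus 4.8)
The plan is to prove the cycle of implications $(3)\Rightarrow(1)\Rightarrow(2)\Rightarrow(3)$, and then to isolate in the last step which parts genuinely use property $(\alpha)$, so that the same argument run for a single multiplier yields $(1)\Rightarrow(2)\Rightarrow(3')$ when $X$ is merely $R$-sectorial.

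\emph{The easy inclusions.} For $(3)\Rightarrow(1)$ I would first record the continuous embedding $\Sea\hookrightarrow\Ha$ with $\|f\|_{\Ha}\lesssim\|f\|_{\Sea}$: after the substitution $\lambda=e^z$ the function $\psi f(t\cdot)$ becomes a fixed $C^\infty_c$ function times a translate of $f_e$, and $\Sobolev^\alpha_p(\R)$ is translation invariant and, since $\alpha>\tfrac1p$, a module over $C^\infty_c$. Hence an $R$-bounded $\Ha$ calculus restricts to an $R$-bounded $\Sea$ calculus, the two being compatible on $\Sea$ by density of $\bigcap_\omega\HI(\Sigma_\omega)\cap\Sea$ (Lemma \ref{Lem HI dense in diverse spaces}). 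For $(1)\Rightarrow(2)$ I would build the auxiliary calculus from the bounded $\Sea$ calculus by the $\rho^\nu$-trick used in Lemma \ref{Lem auxiliary calculus multiplicative}: for $f\in\Hor^\alpha_p$ one has $f\rho^\nu\in\Sea$ with $\|f\rho^\nu\|_{\Sea}\lesssim\|f\|_{\Hor^\alpha_p}$, and $\Phi_A(f)x:=(f\rho^\nu)(A)\rho^{-\nu}(A)x$ ($x\in D(\theta)$, $\theta>\nu$) defines a bounded map $\Hor^\alpha_p\to B(D(\theta),X)$ compatible with the holomorphic calculus. The localized $R$-bound \eqref{Equ Localized R-bound} then comes for free: for $f\in C^\infty_c$ with $\supp f\subset[\tfrac12,2]$ the function $(f(2^n\cdot))_e$ is a translate of $f_e$, and $\exp$ is a smooth diffeomorphism of $[-\ln2,\ln2]$ onto $[\tfrac12,2]$, so $\|f(2^n\cdot)\|_{\Sea}\lesssim\|f\|_{\Sobolev^\alpha_p}$ uniformly in $n$; thus $\{f(2^nA)\}$ lies in a dilate of the $R$-bounded set $\{g(A):\|g\|_{\Sea}\le1\}$.

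\emph{The core implication $(2)\Rightarrow(3)$.} Fix $\phi=\rho^{1/2}\in\bigcap_{0<\omega<\pi}\HI_0(\Sigma_\omega)$, positive on $(0,\infty)$, put $\phi_n=\phi(2^{-n}\cdot)$ (so $\sup_n\|\phi_n(A)\|<\infty$ by the $\HI(\Sigma_\sigma)$ calculus), and fix a dyadic partition of unity $(\dyad_n)$. For $f\in\bigcap_\omega\HI(\Sigma_\omega)\cap\Ha$ and $x\in D_A$, the product and compatibility rules of Section \ref{sec-extended-Hoermander-calculus} give $f(A)x=\sum_n(f\dyad_n)(A)x$ (a sum with finitely many nonzero terms), and writing $f\dyad_n=\phi_n\cdot(f\dyad_n\phi_n^{-2})\cdot\phi_n$ one obtains
\[ (f\dyad_n)(A)=\phi_n(A)\,S_n\,\phi_n(A),\qquad S_n:=(f\dyad_n\phi_n^{-2})(A)=h_n(2^{-n}A), \]
where $h_n(\lambda)=\lambda^{-1}(1+\lambda)^2\dyad(\lambda)f(2^n\lambda)\in C^\infty_c$, $\supp h_n\subset[\tfrac12,2]$, and $\|h_n\|_{\Sobolev^\alpha_p}\lesssim\|f\|_{\Ha}$ (choice-independence of the $\Ha$-norm plus multiplication by the fixed $C^\infty_c$ factor $\lambda^{-1}(1+\lambda)^2$ near $[\tfrac12,2]$). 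By \eqref{Equ Localized R-bound}, $\{S_n:n\in\Z\}$ is $R$-bounded with bound $\lesssim\|f\|_{\Ha}$, whence
\[ \|f(A)x\|=\Bignorm{\sum_n\phi_n(A)[S_n\phi_n(A)x]}\lesssim\Bignorm{(S_n\phi_n(A)x)_n}_{\Rad(X)}\lesssim\|f\|_{\Ha}\,\Bignorm{(\phi_n(A)x)_n}_{\Rad(X)}\lesssim\|f\|_{\Ha}\,\|x\|, \]
the first and last inequalities being the Paley--Littlewood ``synthesis'' and ``analysis'' square function estimates for $\phi\in\HI_0(\Sigma_\sigma)$, valid under the $\HI(\Sigma_\sigma)$ calculus together with property $(\alpha)$ (see \cite{KrW1}), and the middle one being the definition of $R$-boundedness of $\{S_n\}$. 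Density gives the bounded $\Ha$ calculus. For the $R$-bounded $\Ha$ calculus one repeats this for finitely many $f^{(1)},\dots,f^{(N)}$ with $\|f^{(j)}\|_{\Ha}\le1$ and $x^{(j)}\in D_A$, carrying an extra Rademacher sum $\sum_j\epsilon_j$ through the three steps; property $(\alpha)$ is used to interchange the two Rademacher systems, which reduces $\E\|\sum_j\epsilon_j f^{(j)}(A)x^{(j)}\|$ to $R(\{S_n^{(j)}:n,j\})\,\E\|\sum_j\epsilon_j x^{(j)}\|\lesssim\E\|\sum_j\epsilon_j x^{(j)}\|$.

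\emph{The $R$-sectorial case and the main difficulty.} The step $(1)\Rightarrow(2)$ above uses neither property $(\alpha)$ nor $R$-sectoriality, and in $(2)\Rightarrow(3)$ property $(\alpha)$ entered only in the passage to finitely many multipliers; running the argument for a single $f$ therefore yields $(2)\Rightarrow(3')$, the two square function estimates involved now being available under the $\HI(\Sigma_\sigma)$ calculus together with $R$-sectoriality, again by \cite{KrW1}. I expect the main obstacle to be the factorization in $(2)\Rightarrow(3)$: one must split off \emph{holomorphic} outer factors $\phi_n$ (so that the $\HI$-calculus square function machinery applies) while keeping an inner factor that, rescaled to unit scale, is a $C^\infty_c([\tfrac12,2])$-function with $\Sobolev^\alpha_p$-norm controlled by $\|f\|_{\Ha}$, and then one must verify, using the compatibility and product rules of Section \ref{sec-extended-Hoermander-calculus}, that ``$f(A)$'' is unambiguously the same operator throughout — holomorphic calculus, auxiliary calculus, the bounded operator furnished by \eqref{Equ Localized R-bound}, and the final extension to all of $\Ha$. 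The $R$-bounded upgrade then rests on property $(\alpha)$ being precisely what lets the localized $R$-bound pass through a second Rademacher average.
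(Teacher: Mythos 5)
Your proposal is correct and follows the paper's overall architecture — the cycle $(3)\Rightarrow(1)\Rightarrow(2)\Rightarrow(3)$, with the easy implications handled exactly as in the paper (the $\rho^\nu$-trick for the auxiliary calculus, translation invariance of $\Sobolev^\alpha_p$ for \eqref{Equ Localized R-bound}) — but the core step $(2)\Rightarrow(3)$ is organized differently. The paper works with \emph{two} partitions, a holomorphic one $\sum_n\phi^3(2^{-n}\lambda)=1$ with $\phi\in\HI_0(\Sigma_\nu)$ and a compactly supported one $\sum_k\eta(2^{-k}t)=1$, and must therefore control the full double sum over $(n,k)$: it regroups by $l=k-n$, shows the off-diagonal $R$-bounds $C_l=R(\{\phi(2^{-n}A)\eta(2^{-n-l}A)f_j(A)\})\lesssim 2^{-\epsilon|l|}$ using the decay of $\phi(2^l\cdot)$ on $[\frac12,2]$, and sums. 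You avoid the off-diagonal sum entirely by the exact factorization $f\dyad_n=\phi_n\cdot(f\dyad_n\phi_n^{-2})\cdot\phi_n$ with $\phi=\rho^{1/2}$, exploiting that $\dyad_n$ kills the blow-up of $\phi_n^{-2}$ so that the middle factor, rescaled, is a fixed-support $C^\infty_c$ function with $\Sobolev^\alpha_p$-norm $\lesssim\|f\|_{\Ha}$; the price is that you need the synthesis (dual) square function estimate for $\sum_n\phi_n(A)y_n$ with \emph{arbitrary} $y_n$, which you correctly obtain by the same duality the paper uses. Both proofs then run the identical mechanism (analysis square function, $R$-bounded diagonal multipliers from \eqref{Equ Localized R-bound}, synthesis square function), and property $(\alpha)$ enters in both only to interchange the $j$- and $n$-Rademacher systems for the $R$-bounded upgrade. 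Your diagonal version is slightly leaner (no van Gaans-type summation, and the identity $f(A)x=\sum_n(f\dyad_n)(A)x$ on $D_A$ spares you the lower square function equivalence $\|x\|\cong\E\|\sum_n\epsilon_n\phi^2(2^{-n}A)x\|$ that the paper invokes); the paper's version is more robust in that it never inverts the holomorphic localizing function. I see no gap.
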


\begin{proof}
Since $\Sea \subset \Ha,$ we clearly have that (3) implies (1).
Furthermore, if $f \in C^\infty_c(\R_+)$ with $\|f\|_{\Sobolev^\alpha_p} \leq 1,\,\supp f \subset [\frac12,2], \: n \in \Z$ and $g = f(2^n \cdot),$ then $\|g\|_{\Sea} \lesssim 1.$
Indeed, by the fixed support of $f,$ the Sobolev norms of $f(e^{(\cdot)+n \log(2)})$ and $f((\cdot) + n \log(2))$ are equivalent, and we thus have
\begin{align*}
\|g\|_{\Sea} & = \|f(2^n \cdot)\|_{\Sea} = \|f(2^n e^{(\cdot)})\|_{\Sobolev^\alpha_p} \\
& = \| f(e^{(\cdot) + n \log(2)}) \|_{\Sobolev^\alpha_p} \cong \|f((\cdot) + n \log(2))\|_{\Sobolev^\alpha_p} = \|f\|_{\Sobolev^\alpha_p} \leq 1.
\end{align*}
Thus, (1) implies (2).
It remains to show that (2) implies (3).
Consider a function $\phi \in \HI_0(\Sigma_\nu)$ such that $\sum_{n \in \Z}\phi^3(2^{-n} \lambda) = 1$ for any $\lambda \in \Sigma_\nu$ and some $\nu > \sigma.$
Furthermore, consider a function $\eta \in C^\infty_c$ with $\supp \eta \subset [\frac12,2]$ such that $\sum_{n \in \Z} \eta(2^{-n} t) = 1$ for any $t > 0.$
Let $f_1,\ldots,f_N \in C^\infty(\R_+)$ with $\|f_j\|_{\Ha} \leq 1$ for $j = 1,\ldots,N.$
Then for $x_j \in D_A,$ the calculus core,
\begin{align*}
\E \| \sum_{j=1}^N \epsilon_j f_j(A) x_j \| & = \E \| \sum_{j=1}^N \sum_{k \in \Z} \epsilon_j \eta(2^{-k}A)f_j(A)x_j \| \\
& \cong \E \E' \| \sum_{j =1}^N \sum_{n,k \in \Z} \epsilon_j \epsilon_n' \phi^2(2^{-n}A) \eta(2^{-k}A)f_j(A) x_j \| \\
& \leq \sum_{l \in \Z} \E \E' \| \sum_{j=1}^N \sum_{n \in \Z} \epsilon_j \epsilon_n' [\phi(2^{-n}A) \eta(2^{-n-l}(A) f_j(A)] \phi(2^{-n} A) x_j\| \\
& \leq \left( \sum_{l \in \Z} C_l \right) \E \E' \| \sum_{j=1}^N \sum_{n \in \Z} \epsilon_j \epsilon_n' \phi(2^{-n} A)x_j \| \\
& \lesssim  \left( \sum_{l \in \Z} C_l \right) \E \|\sum_{j=1}^N \epsilon_j x_j \|,
\end{align*}
where we have used that $\|x\| \cong \E \| \sum_{n \in \Z} \epsilon_n \phi^2(2^{-n}A) x \| \cong \E \| \sum_{n \in \Z} \epsilon_n \phi(2^{-n}A) x \|.$
Indeed, the second expression is estimated by the third one, since $\{ \phi(2^{-n}A):\:n \in \Z \}$ is $R$-bounded by the $R$-boundedness of the $\HI(\Sigma_\nu)$ calculus \cite[12.8 Theorem]{KuWe}.
The third expression is estimated by the first one according to \cite[12.2 Theorem and 12.3 Remark]{KuWe}.
Finally the first expression is estimated by the second one again by \cite[12.2 Theorem and 12.3 Remark]{KuWe} and $|\langle x, x' \rangle| = | \E \langle \sum_{n \in \Z} \epsilon_n \phi^2(2^{-n}A)x, \sum_{k \in \Z} \epsilon_k \phi(2^{-k} A)x' \rangle| \leq \E \|\sum_n \epsilon_n \phi^2(2^{-n}A) x\| \: \E \| \sum_k \epsilon_k \phi(2^{-k}A)'x'\| \lesssim \E \|\sum_n \epsilon_n \phi^2(2^{-n}A) x\| \: \|x'\|. $

Furthermore, we used property $(\alpha)$ in the fourth line, and $C_l = R(\{ \phi(2^{-n}A)\eta(2^{-n-l}A)f_j(A) :\: n \in \Z,\: j = 1,\ldots,N \})$ and 
\begin{align*}
C_l & \lesssim \sup_{j=1,\ldots,N}\sup_{n \in \Z} \|\phi(2^{l}\cdot)\eta f_j(2^{n+l}\cdot)\|_{\Sea} \\
& \lesssim \sup_{j=1,\ldots,N}\sup_{k \in \Z} \|\eta f_j(2^k\cdot)\|_{\Sea} \sup_{m=0,\ldots,\lfloor \alpha \rfloor + 1} \sup_{t \in [\frac12,2]} t^m | \frac{d^m}{dt^m}\phi(2^l \cdot)(t)| \\
& \lesssim \sup_{j=1,\ldots,N}\|f_j\|_{\Ha}  2^{-\epsilon |l|}, \\
& \leq 2^{-\epsilon |l|}
\end{align*}
where $\epsilon > 0$ and we used the fact that $\phi \in \HI_0(\Sigma_\nu).$
Hence $\sum_{l\in\Z} C_l \lesssim \sup_{j=1,\ldots,N}\|f_j\|_{\Ha} < \infty.$
We have shown that
\begin{equation}\label{Equ Proof Localization Principle}
\{ f(A):\: f \in C^\infty(\R_+),\,\|f\|_{\Ha} \leq 1\}
\end{equation}
is $R$-bounded.
In particular, since $C^\infty(\R_+) \supset \bigcap_{\omega > 0}\HI(\Sigma_\omega),$
$A$ has a bounded $\Ha$ calculus in the sense of Definition \ref{Def Hor calculus}, and
by taking the closure of \eqref{Equ Proof Localization Principle}, this calculus is $R$-bounded.

If $X$ does not have property $(\alpha),$ then repeat the proof of (2) $\Longrightarrow$ (3) above with a single function $f \in C^\infty(\R_+),\: \|f\|_{\Ha} \leq 1$ to get in a similar manner (2) $\Longrightarrow$ (3').
Now $\{\phi(2^{-n}A) :\: n \in \Z\}$ is $R$-bounded since $A$ is $R$-sectorial, as soon as $\phi \in \HI_0(\Sigma_\theta)$ with $\theta > \omega_R(A).$
\end{proof}

From the proof of Theorem \ref{Thm Localization Principle}, we obtain the following.

\begin{cor}
Let $A$ be a $0$-sectorial operator on some Banach space $X.$
Assume that $A$ has an $\HI(\Sigma_\sigma)$ calculus for some $\sigma \in (0,\pi)$ and an auxiliary calculus $\Phi_A : \Ha \to B(D(\theta),X)$ for some $\theta > 0,$ $\alpha > \frac1p$ and $1 \leq p < \infty.$
Let $(\dyad_n)_{n \in \Z}$ be a dyadic partition of unity.
\begin{enumerate}
\item If for any $f \in \Ha$ the set $\{ (\dyad_n f)(A) :\: n \in \Z \}$ is $R$-bounded, then $A$ has a bounded $\Ha$ calculus.
\item If $X$ has property $(\alpha),$ then $A$ has an $R$-bounded $\Ha$ calculus if and only if $\{ (\dyad_n f)(A): \: n \in \Z \}$ is $R$-bounded for any $f \in \Ha.$
\end{enumerate}
\end{cor}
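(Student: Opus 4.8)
We obtain the corollary by inspecting the proof of Theorem~\ref{Thm Localization Principle}. Recall that there the implication $(2)\Rightarrow(3)$ was proved by estimating, for $f_1,\dots,f_N\in C^\infty(\R_+)$ with $\|f_j\|_{\Ha}\le1$ and $x_j\in D_A$,
\[
\E\Bignorm{\sum_{j=1}^N\epsilon_jf_j(A)x_j}\lesssim\Bigl(\sum_{l\in\Z}C_l\Bigr)\E\Bignorm{\sum_{j=1}^N\epsilon_jx_j},\qquad
C_l=R\bigl(\{\phi(2^{-n}A)\,\eta(2^{-n-l}A)\,f_j(A):n\in\Z,\ 1\le j\le N\}\bigr),
\]
where $\eta\in C^\infty_c$, $\supp\eta\subset[\tfrac12,2]$, $\sum_n\eta(2^{-n}t)=1$, and $\phi\in\HI_0(\Sigma_\nu)$ with $\sum_n\phi^3(2^{-n}\lambda)=1$; the only place the localized $R$-bound~(2) entered was the estimate $C_l\lesssim2^{-\epsilon|l|}$, which makes $\sum_lC_l$ finite. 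The plan is to run this argument with $\eta$ chosen to be the given dyadic partition $\dyad$, and to re-derive $C_l\lesssim2^{-\epsilon|l|}$ from the hypothesis of the corollary.

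By multiplicativity of the auxiliary $\Ha$-calculus (Lemmas~\ref{Lem auxiliary calculus multiplicative} and~\ref{Lem Soaloc calculus}), $\eta(2^{-n-l}A)f_j(A)=(\dyad_{n+l}f_j)(A)$ on $D_A$, hence $\{\eta(2^{-n-l}A)f_j(A):n\in\Z\}=\{(\dyad_mf_j)(A):m\in\Z\}$, which is $R$-bounded by assumption with bound independent of $l$; a finite union over $j$ preserves $R$-boundedness. Writing $\widetilde{\dyad_{n+l}}=\dyad_{n+l-1}+\dyad_{n+l}+\dyad_{n+l+1}$, which equals $1$ on $\supp\dyad_{n+l}$, multiplicativity gives
\[
\phi(2^{-n}A)\,(\dyad_{n+l}f_j)(A)=\bigl[\phi(2^{-n}\cdot)\,\widetilde{\dyad_{n+l}}\bigr](A)\,(\dyad_{n+l}f_j)(A).
\]
Since $\phi\in\HI_0(\Sigma_\nu)$, after the dilation $\lambda\mapsto2^{-(n+l)}\lambda$ the functions $\phi(2^{-n}\cdot)\widetilde{\dyad_{n+l}}$ are, for all $n$, equal to one and the same $C^\infty_c$ function $\zeta_l$ with $\|\zeta_l\|_{\Sea}\lesssim2^{-\epsilon|l|}$; exactly as in the proof of Theorem~\ref{Thm Localization Principle} one deduces that $\{[\phi(2^{-n}\cdot)\widetilde{\dyad_{n+l}}](A):n\in\Z\}$ is $R$-bounded with bound $\lesssim2^{-\epsilon|l|}$, here using that $\{\dyad_m(A):m\in\Z\}$ is $R$-bounded (the hypothesis applied to $f\equiv1$) together with the $\HI(\Sigma_\sigma)$-calculus. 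Multiplying the two families and using the ideal property of $R$-bounds yields $C_l\lesssim2^{-\epsilon|l|}$, so $\sum_lC_l<\infty$, and the remaining chain of estimates in the proof of Theorem~\ref{Thm Localization Principle} produces a bounded $\Ha$-calculus, $R$-bounded when $X$ has property~$(\alpha)$. For part~(1) one runs the non-property-$(\alpha)$ variant of that proof with a single $f$, checking that the square-function estimates invoked there remain available from the $\HI(\Sigma_\sigma)$-calculus and the $R$-boundedness of $\{\dyad_n(A):n\}$.

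For the converse implication in part~(2): if $A$ has an $R$-bounded $\Ha$-calculus, then by Lemma~\ref{Lem Elementary Mih Hor} the space $\Ha$ is a Banach algebra that is dilation invariant, so $\|\dyad_nf\|_{\Ha}\le\|\dyad_n\|_{\Ha}\|f\|_{\Ha}=\|\dyad\|_{\Ha}\|f\|_{\Ha}$ uniformly in $n$; hence $\{(\dyad_nf)(A):n\in\Z\}$ lies in a fixed multiple of $\{g(A):\|g\|_{\Ha}\le1\}$ and is therefore $R$-bounded.

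The main obstacle is the passage $C_l\lesssim2^{-\epsilon|l|}$: the hypothesis of the corollary provides an $R$-bound of the dyadic pieces separately for each fixed $f$, which is formally weaker than the uniform localized $R$-bound~(2) of Theorem~\ref{Thm Localization Principle}, so one must verify carefully that, once the geometric decay carried by the $\HI_0$-function $\phi$ has been isolated, what remains can still be controlled by this per-$f$ hypothesis; relatedly, in part~(1) one should confirm that the Littlewood--Paley estimates used in the proof of Theorem~\ref{Thm Localization Principle} really do follow from the $\HI(\Sigma_\sigma)$-calculus and the hypothesis with $f\equiv1$. Everything else is a routine transcription of that proof.
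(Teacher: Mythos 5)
Your converse direction in part (2) is exactly the paper's argument. For the forward direction, however, there is a genuine gap, and it is precisely the obstacle you flag at the end without resolving. The hypothesis of the corollary is qualitative: for each fixed $f\in\Ha$ the set $\{(\dyad_nf)(A):n\in\Z\}$ is $R$-bounded, with a constant that may a priori depend on $f$. To run the chain of estimates from Theorem \ref{Thm Localization Principle} you need the quantitative, uniform version $R(\{(\dyad_nf)(A):n\in\Z\})\leq C\,\|f\|_{\Ha}$; without it, even for a single $f$ you only obtain $\|f(A)x\|\leq C_f\|x\|$ rather than $\|f(A)\|\leq C\|f\|_{\Ha}$, so not even the bounded $\Ha$ calculus of part (1) follows, and for part (2) the constant must in addition be uniform over the finitely many $f_1,\dots,f_N$ with $\|f_j\|_{\Ha}\leq1$. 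The paper obtains this uniformity by a closed graph argument (using the auxiliary calculus $\Phi_A$ to see that $f\mapsto((\dyad_nf)(A))_n$ is closed); nothing in your proposal plays this role.

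A second, related gap is your treatment of the first factor. The bound $R(\{[\phi(2^{-n}\cdot)\widetilde{\dyad_{n+l}}](A):n\in\Z\})\lesssim2^{-\epsilon|l|}$ does not follow from the $R$-boundedness of $\{\dyad_m(A):m\in\Z\}$ together with the $\HI(\Sigma_\sigma)$ calculus: the rescaled function $\zeta_l=\phi(2^l\cdot)\widetilde{\dyad_0}$ is compactly supported, hence lies in no $\HI(\Sigma_\nu)$, and the statement $R(\{\zeta_l(2^{-m}A):m\in\Z\})\lesssim\|\zeta_l\|_{\Sea}$ is itself an instance of the uniform localized condition \eqref{Equ Localized R-bound} that the corollary is supposed to deliver — so your argument is circular at this point. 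The paper avoids both problems by not re-running the proof of Theorem \ref{Thm Localization Principle} at all: after the closed graph step it verifies condition \eqref{Equ Localized R-bound} directly via a splicing device — given arbitrary $f_n\in\Ha$, set $f=\sum_{n\in3\Z}\dyad_nf_n$ (and the two shifted residue classes), note $\widetilde{\dyad_n}f=\dyad_nf_n$ for $n\in3\Z$, and apply the quantitative single-$f$ bound to this one $f$ — and then simply invokes Theorem \ref{Thm Localization Principle}. Some such mechanism for passing from one function at all scales to different functions at different scales is indispensable and is missing from your proof.
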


\begin{proof}
For the second part, note that $\| \dyad_n f \|_{\Ha} \lesssim \|\dyad_n\|_{\Ha} \|f\|_{\Ha} \lesssim \|f\|_{\Ha},$ so that the ``only if'' part follows.
To complete the proof, it now suffices to show that if $\{ (\dyad_n f)(A): \: n \in \Z \}$ is $R$-bounded for any $f \in \Ha,$ then condition \eqref{Equ Localized R-bound} from Theorem \ref{Thm Localization Principle} is satisfied.

First note that it follows from the closed graph theorem together with the existence of the auxiliary calculus $\Phi_A$ that
\begin{equation}
\label{equ-1-proof-corollary-localization}
R \left( \left\{ (\dyad_n f)(A) : \: n \in \Z \right\} \right) \leq C \|f\|_{\Ha} .
\end{equation}
We claim that \eqref{equ-1-proof-corollary-localization} implies
\begin{equation}
\label{equ-2-proof-corollary-localization}
R\left( \left\{ (\dyad_n f_n)(A): \: n \in \Z \right\} \right) \leq C' \sup_{n \in \Z} \| \dyad_n f_n \|_{\Sea}.
\end{equation}
From \eqref{equ-2-proof-corollary-localization} it is easy to see that \eqref{Equ Localized R-bound} follows.
To prove the claim, we let $f_n \in \Ha$ and set $f = \sum_{n \in 3 \Z} \dyad_n f_n.$
Then $\|f\|_{\Ha} \lesssim \sup_{n \in \Z} \|\dyad_n f_n\|_{\Sea}.$
Set $\widetilde{\dyad_n} = \dyad_{n-1} + \dyad_n + \dyad_{n+1},$ so that $\widetilde{\dyad_n} \dyad_m = \delta_{nm} \dyad_n$ for any $m \in 3 \Z.$
It is clear that \eqref{equ-1-proof-corollary-localization} implies that also $R(\{ (\widetilde{\dyad_n} f)(A) :\: n \in \Z \} ) \lesssim \|f\|_{\Ha}.$
Then, since $\widetilde{\dyad_n}f = \dyad_n f_n$ for any $n \in 3 \Z,$ we obtain
\begin{align*}
R \left( \left\{ (\dyad_n f_n)(A) : \: n \in 3 \Z \right\} \right) & = R \left( \left\{ (\widetilde{\dyad_n} f)(A):\: n \in 3 \Z \right\} \right) \\
& \leq R \left( \left\{ (\widetilde{\dyad_n} f)(A) :\: n \in \Z \right\} \right) \\
& \lesssim \| f \|_{\Ha} \lesssim \sup_{n \in \Z} \|\dyad_n f_n \|_{\Sea}.
\end{align*}
Now apply the same argument to $f = \sum_{n \in 3 \Z + k} \dyad_n f_n$ with $k = 1,2,$ to deduce \eqref{equ-2-proof-corollary-localization}.
\end{proof}

\section{Imaginary powers and the $\Ha$ calculus}\label{Sec 6 Hormander}

In this section, we establish sufficient conditions for the $\Hor^\alpha_p$ calculus based on polynomial growth estimates of imaginary powers.

\begin{thm}\label{Thm Sufficient conditions BIP}
Let $A$ be a $0$-sectorial and $R$-sectorial operator on $X$ having an $\HI(\Sigma_\sigma)$ calculus for some $\sigma \in (0,\pi).$
Let $r \in (1,2],\: \frac1r > \frac{1}{\type X} - \frac{1}{\cotype X}.$
\begin{enumerate}
\item If $\{(1+|t|)^{-\alpha} A^{it} :\: t \in \R \}$ is semi-$R$-bounded and $X$ has property $(\alpha),$ then $A$ has an $R$-bounded $\Hor^\beta_2$ calculus for $\beta > \alpha + \frac12.$
\item If $\{(1+|t|)^{-\alpha} A^{it} :\: t \in \R\}$ is bounded, then $A$ has a bounded $\Hor^\beta_r$ calculus for $\beta > \alpha + \frac1r.$
If $X$ has in addition property $(\alpha),$ then this calculus is $R$-bounded.
\end{enumerate}
Conversely, if $A$ has an $R$-bounded $\Hor^\beta_p$ functional calculus for some $\beta$ and $p$ such that $\beta > \frac1p,$ then $\{(1+|t|)^{-\alpha} A^{it} :\: t \in \R\}$ is $R$-bounded for any $\alpha > \beta.$
\end{thm}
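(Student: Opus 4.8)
The plan is to treat the three statements separately, using the Localization Principle (Theorem \ref{Thm Localization Principle}) as the central reduction tool for the two sufficiency parts, and a direct application of Lemma \ref{Lem Mihlin norms of functions}(3) for the converse.

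For part (2), the strategy is to verify condition (1) of Theorem \ref{Thm Localization Principle}, namely that $A$ has an $R$-bounded (or merely bounded, in the no-property-$(\alpha)$ case) $\Sea[r]$ calculus with $\alpha$ replaced by a parameter $\gamma$ satisfying $\alpha + \tfrac1r < \gamma < \beta$. The key point is that the boundedness of $\{(1+|t|)^{-\alpha} A^{it} : t \in \R\}$ together with Lemma \ref{Lem Sobolev calculus}(1) gives a bounded $\Sea[r]$ calculus with the representation $\langle f(A)x, x'\rangle = \frac{1}{2\pi}\int_\R (f_e)\hat{\phantom{i}}(t)\langle A^{it}x,x'\rangle dt$; here one needs that $\tma A^{it}$ is integrable against $(f_e)\hat{\phantom{i}}$ in the appropriate weak sense. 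To upgrade to $R$-boundedness when $X$ has property $(\alpha)$, I would invoke that a bounded $\HI$ calculus on a space with property $(\alpha)$ is automatically $R$-bounded, or alternatively use Proposition \ref{Prop Hytonen Veraar}: the family $\{(f_e)\hat{\phantom{i}} : \|f\|_{\Sea[\gamma]} \le 1\}$ lies in a ball of $L^{r'}(\R,\langle t\rangle^{-\gamma r'}dt)$ after accounting for the weight, and $t\mapsto \tma A^{it}$ (composed into $B(X)$) is dominated in $L^r$ by the boundedness hypothesis precisely when $\gamma > \alpha + \tfrac1r$, so Proposition \ref{Prop Hytonen Veraar}(1) yields the $R$-bound. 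Then Theorem \ref{Thm Localization Principle} promotes this to the $\Hor^\beta_r$ calculus (bounded, or $R$-bounded under property $(\alpha)$), and Lemma \ref{Lem Elementary Mih Hor}(2) lets one pass between $\Hor^\gamma_r$ and $\Hor^\beta_r$ for the slightly larger exponent $\beta$.

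For part (1), the input is only semi-$R$-boundedness of $\{(1+|t|)^{-\alpha}A^{it}\}$, which is weaker than $R$-boundedness but, by Lemma \ref{Lem semi-R-bounded}, says exactly that for each $x$ the orbit $\{\tma A^{it}x\}$ generates an $R$-bounded subset of $B(\C,X)$. Combined with Proposition \ref{Prop Hytonen Veraar}(2) (with $Y = X$ of type $p$, using $r=2$ so that $\tfrac12 > \tfrac1p - \tfrac12$ whenever $X$ has nontrivial type, which is guaranteed here), this gives a semi-$R$-bounded $\Sea[\gamma]$ calculus for $\gamma > \alpha + \tfrac12$. The point is that semi-$R$-boundedness of the $\Sea$ calculus is still enough to run the localization argument: in the proof of Theorem \ref{Thm Localization Principle}, condition (1) is used only to produce the localized $R$-bound \eqref{Equ Localized R-bound}, and one checks that composing a semi-$R$-bounded family with the fixed $R$-bounded family $\{\phi(2^{-n}A)\}$ on both sides recovers a genuine $R$-bound on the localized pieces $f(2^nA)$ with $\supp f \subset [\tfrac12,2]$. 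So again Theorem \ref{Thm Localization Principle} applies and yields the $R$-bounded $\Hor^\beta_2$ calculus for $\beta > \alpha + \tfrac12$.

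For the converse, suppose $A$ has an $R$-bounded $\Hor^\beta_p$ calculus. By Lemma \ref{Lem Mihlin norms of functions}(3), $\|\lambda\mapsto \lambda^{it}\|_{\Hor^{\beta}_p} \lesssim \langle t\rangle^{\beta + \epsilon}$ — more precisely that lemma gives $\|\lambda^{it}\|_{\Hor^{\alpha-\epsilon}_p}\lesssim \langle t\rangle^\alpha$, i.e. for any $\alpha > \beta$ one has $\|\lambda^{it}\|_{\Hor^{\beta}_p} \lesssim \langle t\rangle^{\alpha}$. Hence $\{ \langle t\rangle^{-\alpha} \lambda^{it} : t\in\R\}$ is a bounded subset of the unit ball of $\Hor^\beta_p$ up to a constant, and applying the $R$-bounded functional calculus gives that $\{\langle t\rangle^{-\alpha} A^{it} : t\in\R\}$ is $R$-bounded; since $\langle t\rangle \cong 1+|t|$ this is the claim.

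\medskip

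The main obstacle I anticipate is the passage from semi-$R$-boundedness in part (1) through the localization machinery: one must check carefully that the specific estimates in the proof of Theorem \ref{Thm Localization Principle} (which were written assuming an $R$-bounded $\Sea$ calculus) go through with the weaker semi-$R$-bounded hypothesis, exploiting that in \eqref{Equ Localized R-bound} the middle factor $\eta f_j(2^{n+l}\cdot)$ has uniformly bounded $\Sea$ norm while the flanking factors $\phi(2^{-n}A)$ supply genuine $R$-boundedness — so the composition semi-$R$-bdd $\circ$ $R$-bdd on the outside should restore an honest $R$-bound. The verification that a singleton composed before/after a semi-$R$-bounded family stays semi-$R$-bounded, and that flanking by an $R$-bounded family upgrades it to $R$-bounded, is exactly the kind of bookkeeping already used in Section \ref{Sec Prelims Rad}, so it is routine but needs to be spelled out.
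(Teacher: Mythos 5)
Your part (2) and your converse follow the paper's own route: dominate $t\mapsto\langle t\rangle^{-\beta}\|A^{it}\|$ by an $L^r$ function, get the $\Sobexp^\beta_r$ calculus from Lemma \ref{Lem Sobolev calculus}(1), upgrade it to an $R$-bounded one via Hausdorff--Young and Proposition \ref{Prop Hytonen Veraar}(1), and finish with Theorem \ref{Thm Localization Principle}; the converse is exactly Lemma \ref{Lem Mihlin norms of functions}(3) plus Lemma \ref{Lem Elementary Mih Hor}. Part (1), however, contains a genuine gap. Proposition \ref{Prop Hytonen Veraar}(2) only produces a \emph{semi}-$R$-bounded $\Sobexp^\gamma_2$ calculus, and the Localization Principle cannot be run on that input. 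In the proof of Theorem \ref{Thm Localization Principle} the quantity that must be controlled is $C_l=R(\{\phi(2^{-n}A)\eta(2^{-n-l}A)f_j(A):n\in\Z,\,j\})$, and these operators are applied to the vectors $\phi(2^{-n}A)x_j$, which vary genuinely with $(n,j)$. Semi-$R$-boundedness of a family $\tau$ only controls sums $\sum_k\epsilon_kT_ka_kx$ with scalar coefficients $a_k$ against a \emph{single} vector $x$; flanking by the $R$-bounded family $\{\phi(2^{-n}A)\}$ does not convert this into an estimate against the distinct vectors $\phi(2^{-n}A)x_j$. So the claimed upgrade ``semi-$R$-bdd $\circ$ $R$-bdd on the outside restores an honest $R$-bound'' is false as stated, and condition \eqref{Equ Localized R-bound} is not obtained. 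A secondary defect: your application of Proposition \ref{Prop Hytonen Veraar}(2) with $r=2$ needs $\tfrac12>\tfrac{1}{\type X}-\tfrac12$, which is not among the hypotheses of part (1), and the $L^2$-orbit bound you feed into it already follows from the mere norm bound $\|A^{it}\|\lesssim\langle t\rangle^\alpha$ --- so your argument never actually uses the semi-$R$-boundedness hypothesis, which is a sign something is off.

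The missing idea is a square-function step that converts the semi-$R$-bound into a genuine $R$-bound \emph{before} localizing. By \cite[Proposition 4.1]{VeWe}, semi-$R$-boundedness of $\{\langle t\rangle^{-\alpha}A^{it}\}$ gives, for $\beta>\alpha+\tfrac12$,
\[
\bignorm{\langle t\rangle^{-\beta}A^{it}x}_{\gamma(\R,X)}\leq \bignorm{\langle t\rangle^{-(\beta-\alpha)}}_{L^2(\R)}\,R_s\bigl(\{\langle t\rangle^{-\alpha}A^{it}\}\bigr)\,\|x\|\lesssim\|x\|,
\]
i.e.\ the orbit $t\mapsto\langle t\rangle^{-\beta}A^{it}x$ lies in the Gaussian space $\gamma(\R,X)$ with norm $\lesssim\|x\|$. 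This square-function estimate, via \cite[Proposition 3.3]{LM2} (see also \cite[Corollary 3.19]{HaKu}), yields that $\{\int_\R f(t)\langle t\rangle^{-\beta}A^{it}\,dt:\|f\|_{L^2(\R)}\leq1\}$ is genuinely $R$-bounded, which by the Mellin representation of Lemma \ref{Lem Sobolev calculus} is precisely an $R$-bounded $\Sobexp^\beta_2$ calculus. Only at that point does Theorem \ref{Thm Localization Principle} apply and deliver the $R$-bounded $\Hor^\beta_2$ calculus. You should replace your Proposition \ref{Prop Hytonen Veraar}(2) step and the subsequent ``semi-$R$-bounded localization'' by this $\gamma$-space argument.
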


\begin{rem}
\label{rem-type-Lp}
Recall that an $L^p(U)$ space, $1 < p < \infty,$ has type $\min(p,2)$ and cotype $\max(2,p),$ so that in this case, $\frac1r > \left| \frac12 - \frac1p \right|.$
An $L^p(U)$ space always has property $(\alpha).$
\end{rem}

\begin{proof}[Proof of Theorem \ref{Thm Sufficient conditions BIP}]
For (1) (resp.(2)), by Theorem \ref{Thm Localization Principle}, it suffices to show that $A$ has an $R$-bounded $\Sobexp^\beta_2$ calculus (resp. an $R$-bounded $\Sobexp^\beta_r$ calculus).

We now prove (1).
To this end, we show that 
\begin{equation}
\text{The function }t \mapsto \langle t \rangle^{-\beta} A^{it} x\text{ belongs to }\gamma(\R,X)\text{ with norm }\lesssim \|x\|\text{ for }x \in X.\tag*{$(S_{\BIP})_\beta$}
\end{equation}
Here, $\gamma(\R,X)$ is the Gaussian space as defined for example in \cite[Definition 3.7]{vN}.
$(S_{\BIP})_\beta$ will then imply by \cite[Proposition 3.3]{LM2}, see also \cite[Corollary 3.19]{HaKu}, that
\[ \left\{ \int_\R f(t) \langle t \rangle^{-\beta} A^{it} dt :\: \|f\|_{L^2(\R)} \leq 1 \right\} \text{ is }R\text{-bounded.} \]
Since $\|f\|_{\Sobolev^\beta_2} \cong \|\hat{f}(t) \langle t \rangle^{\beta}\|_{L^2},$
this implies by Lemma \ref{Lem Sobolev calculus} that $A$ has a $\Sobexp^\beta_2$ calculus
and that this calculus is $R$-bounded.
Thus, (1) follows from $(S_{\BIP})_\beta,$ which we show now to hold.
By \cite[Proposition 4.1]{VeWe}, we have that 
\begin{align*}
\|\langle t \rangle^{-\beta} A^{it} x\|_{\gamma(\R,X)} & = \| \langle t \rangle^{-(\beta-\alpha)} \langle t \rangle^{-\alpha} A^{it} x \|_\gamma \\
& \leq \| \langle t \rangle^{-(\beta-\alpha)} \|_{L^2(\R)} R_s(\langle t \rangle^{-\alpha} A^{it}:\:t \in \R) \|x\| \\
& \lesssim \|x\|.
\end{align*}
Thus, $(S_{\BIP})_\beta$ follows.

We now prove (2).
Write $U(t) = A^{it}.$
Clearly, $t \mapsto \langle t \rangle^{-\beta} \|U(t)\|$ is dominated by a function in $L^{r}(\R).$
Indeed, $\langle t \rangle^{-\beta} \|U(t)\| = \langle t \rangle^{-(\beta-\alpha)} (\tma \|U(t)\|),$
and the first factor is in $L^r(\R)$ by the choice of $\beta,$ and the second factor is bounded by the assumption (2).
In particular, by Lemma \ref{Lem Sobolev calculus}, $A$ has a $\Sobexp^\beta_r$ calculus, and for $f \in \Sobexp^\beta_r,$ we have
\[f(A)x = \frac{1}{2\pi} \int_\R \langle t \rangle^{\beta} (f_e)\hat{\phantom{i}}(t) \langle t \rangle^{-\beta} U(t) x dt \quad (x\in X).\]
Since $r \leq 2,$ we have by the Hausdorff-Young inequality
$\|\hat{f_e}(t) \langle t \rangle^\beta \|_{L^{r'}(\R)} \lesssim \|f_e\|_{\Sobolev^\beta_r} = \|f\|_{\Sobexp^\beta_r}.$
Further, by the assumption $\frac1r > \frac{1}{\type X}-\frac{1}{\cotype X},$ we can apply Proposition \ref{Prop Hytonen Veraar} and consequently,
\[ R(\{ f(A):\:\|f\|_{\Sobexp^\beta_r} \leq 1\}) \lesssim R(\{ f(A):\: \|\hat{f_e}(t) \langle t \rangle^\beta \|_{L^{r'}(\R)} \leq 1\}) < \infty. \]

The converse statement follows by applying Lemmas \ref{Lem Mihlin norms of functions} and \ref{Lem Elementary Mih Hor}.
\end{proof}

\begin{rem}
The fact that $A$ has a H\"ormander calculus provided that imaginary powers of $A$ grow at most polynomially has been studied before by Meda \cite{Meda}.
He assumes that $-A$ is self-adjoint and generates a contraction semigroup on $L^p$ for all $1 < p < \infty.$
Note that such an operator has an $\HI$ calculus on $L^p$ for any $1 < p < \infty$ \cite{Co}.
If furthermore the imaginary powers satisfy $\| A^{it} \| \leq C_p (1 + |t|)^{\beta |\frac1p-\frac12|}$ for all $1 < p < \infty$ and some $\beta > 0,$ then Meda shows that $A$ has a $\Hor^{\beta/2 + 1 + \epsilon}_\infty$ calculus on $L^p$ for $1 < p < \infty$ \cite[Corollary 1, Theorem 4]{Meda}.
By comparison our result guarantees a $\Hor^{\beta | \frac1p - \frac12| + \frac1r +\epsilon}_r$ calculus with $\frac1r = |\frac1p - \frac12| < 1,$
which is a stronger result according to Lemma \ref{Lem Elementary Mih Hor} (2).
Moreover, our functional calculus is $R$-bounded, not just bounded.
\end{rem}

The optimality of the assumptions of Theorem \ref{Thm Sufficient conditions BIP} will be discussed in Section \ref{Sec Examples Counterexamples}.

\section{Semigroups and the $\Ha$ calculus}
\label{sec-Semigroup}

We have the following analogue of Theorem \ref{Thm Sufficient conditions BIP} considering $R$-bounds on the analytic semigroup of $A$ in place of the imaginary powers.

\begin{thm}\label{Thm Sufficient conditions Sgr}
Let $A$ be a $0$-sectorial operator on $X$ having an $\HI(\Sigma_\sigma)$ calculus for some $\sigma \in (0,\pi).$
Let $r \in (1,2]$ with $\frac1r > \frac{1}{\type X} - \frac{1}{\cotype X}$ and $\alpha \geq 0.$
(See Remark \ref{rem-type-Lp} for the $L^q$-case.)
\begin{enumerate}
\item If 
\begin{equation}
\label{R_T}\left\{ \left(\frac{\pi}{2} - |\theta| \right)^\alpha \exp(-e^{i\theta} t A) :\: t > 0,\: \theta \in \left(-\frac{\pi}{2},\frac{\pi}{2} \right) \right\}\text{ is }R\text{-bounded}
\end{equation}
and $X$ has property $(\alpha),$ then $A$ has an $R$-bounded $\Hor^\beta_2$ calculus for $\beta > \alpha + \frac12.$
\item If 
\begin{equation}
\label{N_T}
R\left( \left\{ \exp(-e^{i\theta} t 2^n A) :\: n \in \Z \right\} \right) \leq C (\frac{\pi}{2} - |\theta|)^{-\alpha}
\end{equation}
with a constant $C < \infty$ uniformly in $t > 0$ and $\theta \in (-\frac{\pi}{2},\frac{\pi}{2}),$
then $A$ has a bounded $\Hor^\beta_r$ calculus for $\beta > \alpha + \frac1r.$
If $X$ has in addition property $(\alpha),$ then this calculus is $R$-bounded.
\end{enumerate}
Conversely, if $A$ has an $R$-bounded $\Hor^\beta_p$ functional calculus for some $p \in [1,\infty)$ and $\beta > \frac1p,$ then $\{ (\frac{\pi}{2} - |\theta|)^\beta \exp(-e^{i\theta} t A) :\: t > 0,\: \theta \in (-\frac{\pi}{2},\frac{\pi}{2}) \}$ is $R$-bounded.
\end{thm}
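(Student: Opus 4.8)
The plan is to mirror the proof of Theorem~\ref{Thm Sufficient conditions BIP}, replacing the imaginary powers $A^{it}$ by the analytic semigroup on rotated rays $e^{-e^{i\theta}tA}$. The backbone is again the Localization Principle: by Theorem~\ref{Thm Localization Principle} it suffices, for part~(1), to produce an $R$-bounded $\Sobexp^\beta_2$ calculus, and for part~(2) a bounded---and, under property~$(\alpha)$, $R$-bounded---$\Sobexp^\beta_r$ calculus. To feed the hypotheses on $e^{-e^{i\theta}tA}$ into this machinery, the first task is to establish the semigroup counterpart of the representation formulas of Lemma~\ref{Lem Sobolev calculus}. Reducing (by density, Lemma~\ref{Lem HI dense in diverse spaces}) to $f$ holomorphic in every sector and lying in $\Sobexp^\beta_p$, one inserts into the Cauchy integral formula~\eqref{Equ Cauchy Integral Formula} the identity $(\lambda-A)^{-1}=e^{i\psi}\int_0^\infty e^{re^{i\psi}\lambda}\,e^{-re^{i\psi}A}\,dr$, valid for a rotation angle $\psi=\psi(\arg\lambda)$ with $|\psi|<\frac{\pi}{2}$ and $\Re(e^{i\psi}\lambda)<0$, and applies Fubini. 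This exhibits $f(A)$ as a superposition $\int\!\!\int h_f(t,\theta)\,e^{-e^{i\theta}tA}\,dt\,d\theta$ over $t>0$ and $\theta$ near $\pm\frac{\pi}{2}$, and simultaneously produces the auxiliary calculus referred to in Lemma~\ref{Lem auxiliary calculus multiplicative} (this is the content of Remark~\ref{rem-semigroup-auxiliary-calculus}). The essential bookkeeping is that the weighted $L^2$- (resp.\ $L^{r'}$-)norm of $h_f$ in $(t,\theta)$, against the weight $(\frac{\pi}{2}-|\theta|)^{-\alpha}$ coming from \eqref{R_T}, \eqref{N_T}, is controlled by $\|f\|_{\Sobexp^\beta_p}$ precisely when $\beta>\alpha+\frac12$ (resp.\ $\beta>\alpha+\frac1r$).

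For part~(1) I would then argue exactly as for $(S_{\BIP})_\beta$: using that \eqref{R_T} is in particular semi-$R$-bounded, split off the factor $(\frac{\pi}{2}-|\theta|)^{-\alpha}$ and absorb it via \cite[Proposition~4.1]{VeWe}, so that a suitably weighted version of $(t,\theta)\mapsto e^{-e^{i\theta}tA}x$ belongs to the Gaussian space $\gamma(\,\cdot\,,X)$ with norm $\lesssim\|x\|$, the condition $\beta>\alpha+\frac12$ being exactly what makes the remaining scalar weight square-integrable in $(t,\theta)$. Then \cite[Proposition~3.3]{LM2} (see also \cite[Corollary~3.19]{HaKu}) upgrades this $\gamma$-bound to $R$-boundedness of the associated set of integral operators $\{\int f\,(\cdots)\,e^{-e^{i\theta}tA}\,dt\,d\theta:\|f\|_{L^2}\le 1\}$, and, via the representation formula together with $\|f\|_{\Sobexp^\beta_2}\cong\|\widehat{f_e}\,\langle t\rangle^\beta\|_{L^2}$, this is the desired $R$-bounded $\Sobexp^\beta_2$ calculus; Theorem~\ref{Thm Localization Principle} concludes.

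For part~(2) the dyadic hypothesis~\eqref{N_T} yields the norm bound $\|e^{-e^{i\theta}tA}\|\le C(\frac{\pi}{2}-|\theta|)^{-\alpha}$ together with the stronger dyadic $R$-bound; plugging into the representation formula, the Hausdorff--Young inequality ($r\le 2$) estimates $h_f$ in $L^{r'}$ by $\|f\|_{\Sobexp^\beta_r}$, while $t\mapsto(\frac{\pi}{2}-|\theta|)^{-\beta}\|e^{-e^{i\theta}tA}\|$ is dominated by an $L^r$ function since $\beta>\alpha+\frac1r$, so Proposition~\ref{Prop Hytonen Veraar} (using $\frac1r>\frac{1}{\type X}-\frac{1}{\cotype X}$) gives a bounded $\Sobexp^\beta_r$ calculus, $R$-bounded when $X$ has property~$(\alpha)$; Theorem~\ref{Thm Localization Principle} (its second alternative in the absence of property~$(\alpha)$) then produces the $\Hor^\beta_r$ calculus. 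The converse is immediate: for $g_{t,\theta}(\lambda)=(\frac{\pi}{2}-|\theta|)^\beta e^{-e^{i\theta}t\lambda}$, Lemma~\ref{Lem Mihlin norms of functions}(1) and the dilation invariance Lemma~\ref{Lem Elementary Mih Hor}(3) give $\|g_{t,\theta}\|_{\Hor^\beta_p}=(\frac{\pi}{2}-|\theta|)^\beta\|\exp(-e^{i\theta}\lambda)\|_{\Hor^\beta_p}\lesssim 1$ uniformly in $t>0$ and $|\theta|<\frac{\pi}{2}$, and since $g_{t,\theta}(A)=(\frac{\pi}{2}-|\theta|)^\beta e^{-e^{i\theta}tA}$ by compatibility of the calculi, applying the $R$-bounded $\Hor^\beta_p$ calculus to this bounded family gives the claim.

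The step I expect to be the real obstacle is establishing the semigroup representation formula with the \emph{sharp} dependence on $\frac{\pi}{2}-|\theta|$. Unlike the imaginary-power case, where $f(A)$ is a single one-dimensional Fourier integral of $A^{it}$, here one must both contour-integrate and integrate out an angular variable, and one has to verify that rotating the Cauchy contour toward the imaginary axis produces exactly the weight $(\frac{\pi}{2}-|\theta|)^{\alpha}$ occurring in the hypotheses, with no loss beyond the $\frac12$ (resp.\ $\frac1r$) that is already forced by the Fourier/Hausdorff--Young estimate. Controlling $h_f$ near $|\theta|\to\frac{\pi}{2}$ and as $t\to 0,\infty$, against the blow-up of the semigroup there, is the delicate point; additionally, in the non-property-$(\alpha)$ case of part~(2) one must separately verify the $R$-sectoriality of $A$ needed to invoke the second alternative of Theorem~\ref{Thm Localization Principle}, which in this setting should follow from~\eqref{N_T} and the summation results of Section~\ref{Sec Prelims Rad}.
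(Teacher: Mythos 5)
Your part (2) and the converse are essentially the paper's argument, and the ``delicate point'' you flag there is in fact a non-issue once the representation formula is set up correctly: one does not integrate over a two-parameter family $(t,\theta)$ at all, but writes $f(2^n A)x = \frac{1}{2\pi}\int_\R \check{f}(s-i)\exp((-is-1)2^nA)x\,ds$, a one-dimensional Fourier integral over the single shifted contour $\Re z=1$ (after rescaling by $2^n$). The weight $(\frac{\pi}{2}-|\theta|)^{-\alpha}$ appears automatically as $|{-is-1}|^{\alpha}$, since $\frac{\pi}{2}-|\theta|\cong |1+is|^{-1}$ when $e^{i\theta}t=1+is$; Hausdorff--Young and Proposition \ref{Prop Hytonen Veraar} then finish as you describe. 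One device you omit: to obtain the $R$-bound of \eqref{Equ Localized R-bound} \emph{jointly} in $n\in\Z$, the paper runs the Hytönen--Veraar argument on $\Rad(X)$, treating $(f(2^nA))_{n\in\Z}$ as a single operator on $\Rad(X)$. Your expectation that the $R$-sectoriality needed for the second alternative of Theorem \ref{Thm Localization Principle} follows from \eqref{N_T} is correct.

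The genuine gap is in part (1). Any argument of the form ``representation formula plus Hausdorff--Young/Hytönen--Veraar'' yields the loss $\frac1r$ with $\frac1r>\frac{1}{\type X}-\frac{1}{\cotype X}$, not $\frac12$, and you have not identified the mechanism that produces $\frac12$; the two-dimensional kernel $h_f(t,\theta)$ whose sharp estimation you defer is precisely the missing step, and it is not how the paper proceeds. The paper's proof of (1) uses no representation formula for $f(A)$ at all. After reducing to $\sigma<\frac{\pi}{2}$ (legitimate by part (2)), it proves the weighted square function estimate $\|A^{1/2}T(e^{i\theta}t)x\|_{\gamma(\R_+,dt,X)}\lesssim (\frac{\pi}{2}-|\theta|)^{-\alpha-\frac12}\|x\|$. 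The exponent $\frac12$ comes from the unweighted estimate $\|A^{1/2}T(t)x\|_{\gamma(\R_+,dt,X)}\lesssim\|x\|$, a consequence of the bounded $\HI(\Sigma_\sigma)$ calculus with $\sigma<\frac{\pi}{2}$ (via \cite{KaW2}); rescaling $t\mapsto ts$ gives $s^{-1/2}$, and the decomposition $e^{\pm i(\frac{\pi}{2}-\omega)}=re^{\pm i(\frac{\pi}{2}-\frac{\omega}{2})}+s$ with $s\cong\omega$ yields the factorization $A^{1/2}T(te^{\pm i(\frac{\pi}{2}-\omega)})=T(tre^{\pm i(\frac{\pi}{2}-\frac{\omega}{2})})\circ A^{1/2}T(ts)$, so that \eqref{R_T} contributes $\omega^{-\alpha}$ and the square function contributes $\omega^{-1/2}$. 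The conclusion is then drawn from the square-function characterization of the $R$-bounded $\Hor^\beta_2$ calculus in \cite{KrW2}, not from Theorem \ref{Thm Localization Principle}. Without this input from the $\HI$ calculus (or an equivalent Plancherel-type substitute), your sketch cannot reach the exponent $\alpha+\frac12$.
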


\begin{proof}
For part (1), we can assume w.l.o.g. that $\sigma < \frac{\pi}{2}.$
Indeed, the proof of (2) below which has weaker assumptions than (1) shows that $A$ has a bounded $\Hor^\beta_r$ calculus for $\beta > \alpha + \frac1r$ and thus a bounded $\HI(\Sigma_\sigma)$ calculus to any angle $\sigma > 0.$
We show that the assumptions imply
\begin{align}
&\text{The function }t \mapsto A^{1/2}T(e^{i\theta}t)x\text{ belongs to }\gamma(\R_+,dt,X) \tag*{$(S_T)_{\alpha+\frac12}$} \\
&\text{ with norm }\lesssim (\frac{\pi}{2} - |\theta|)^{-\alpha-\frac12} \|x\|\text{ for }x \in X\text{ and }\theta \in (-\frac{\pi}{2},\frac{\pi}{2}).\nonumber
\end{align}
Here, $\gamma(\R_+,dt,X)$ is the Gaussian space as defined for example in \cite{KaW2},  \cite[Definition 3.7]{vN}.
Then $(S_T)_{\alpha+\frac12}$ will imply by \cite[Proposition 3.3]{LM2} and \cite[Corollary 3.19]{HaKu} that
\[ \left\{ \int_0^\infty f(t) A^{\frac12} T(e^{i\theta}t) dt :\: \|f\|_{L^2(\R_+,dt)} \leq 1 \right\} \text{ is }R\text{-bounded with }R\text{-bound }\lesssim (\frac{\pi}{2} - |\theta|)^{-\alpha - \frac12}\]
for $|\theta| < \frac{\pi}{2},$ which implies by \cite{KrW2} that $A$ has an $R$-bounded $\Hor^{\beta}_2$ calculus for any $\beta > \alpha + \frac12.$

By \cite[Theorem 7.2, Proposition 7.7]{KaW2}, the fact that $A$ has an $\HI(\Sigma_\sigma)$ calculus with angle $\sigma < \frac{\pi}{2}$ implies that for $x \in D(A) \cap R(A),$
$\|A^{\frac12}T(t)x\|_{\gamma(\R_+,X)} \lesssim \|x\|,$
so that by \cite[Corollary 6.3]{vN} with the isomorphic mapping $L^2(\R_+,dt) \to L^2(\R_+,dt), t \mapsto ts$
\[ \| A^{\frac12} T(ts) x \|_{\gamma(\R_+,X)} \lesssim s^{-\frac12} \|x\| \]
for $s > 0.$
Decompose
\[ e^{\pm i(\frac{\pi}{2} - \omega)} = r e^{\pm i (\frac{\pi}{2}-\frac{\omega}{2})} + s\]
where $s,r > 0$ are uniquely determined.
By the law of sines, $s \cong \omega$ for $\omega \to 0+.$
Then
\[A^{\frac12} T(te^{\pm i (\frac{\pi}{2} -\omega)}) = T(tre^{\pm i (\frac{\pi}{2}-\frac{\omega}{2})}) \circ A^{\frac12} T(ts).\]
Therefore, by assumption of part (1),
\begin{align*}
\| A^{\frac12} T(te^{\pm i (\frac{\pi}{2} - \omega)}) x \|_{\gamma(\R_+,X)}
& \leq R(\{T(tre^{\pm i (\frac{\pi}{2} - \frac{\omega}{2})}):\:t > 0\}) \| A^{\frac12} T(ts) x\|_{\gamma(\R_+,X)}
\\ & \lesssim \omega^{-\alpha} \| A^{\frac12} T(ts)x\|_{\gamma(\R_+,X)}
\\ & \lesssim \omega^{-\alpha} \omega^{-\frac12} \|A^{\frac12}T(t)x\|_{\gamma(\R_+,X)}
\\ & \lesssim \omega^{-\alpha - \frac12} \|x\|.
\end{align*}
Now $(S_T)_{\alpha + \frac12}$ follows and part (1) is proved.

We now turn to part (2).
Note that \eqref{N_T} implies that $A$ is $R$-sectorial \cite[2.16 Example and 2.20 Theorem]{KuWe}.
Then similarly to the proof of Theorem \ref{Thm Sufficient conditions BIP}, (2) follows from Theorem \ref{Thm Localization Principle}, if we can show that \eqref{Equ Localized R-bound} holds with $\alpha$ replaced by $\beta$ and $p$ replaced by $r.$
To this end, let $f \in C^\infty_c(\R_+)$ with $\supp f \subset [\frac12,2]$ and $\|f\|_{\Sobolev^\beta_r} \leq 1.$
Write $\delta = \beta - \alpha > \frac{1}{r}.$
The assumptions imply by an inspection of the proof of \cite[End of 2.16 Example]{KuWe} that the $R$-boundedness assumption of (1) hold for some larger $\alpha'.$
This in turn implies by (a simplified version of) the proof of part (1) (see also Remark \ref{rem-semigroup-auxiliary-calculus}) that $A$ has a $\Sobexp^{\alpha'}_p$ calculus.
Then by Lemma \ref{Lem Sobolev calculus} (3), for $x \in D_A$ and $n \in \Z,$
\[ f(2^n A)x = \frac{1}{2\pi} \int_{\R} \check{f}(s) \exp(-is2^nA)x ds = \frac{1}{2\pi} \int_{\R} \check{f}(s-i) \exp((-is-1)2^nA)x ds. \]
Here we performed a shift of a complex contour integral, which is allowed, since $|\check{f}(s-it)|$ decays faster than any polynomial for $|s|\to \infty$ and $t \in [0,1],$ and $\| \exp((-is-1)2^nA)x \|$ grows only polynomially as $|s|\to\infty.$
We decompose the integrand as
\begin{equation}\label{Equ Proof Prop norm Cr} \left[ \frac{1}{|-is-1|^{\alpha+\delta}} \exp((-is-1)2^nA)x \right] \left[ \check{f}(s-i) |-is-1|^{\alpha+\delta} \right].
\end{equation}
Consider the first bracket as a function in the variable $s \in \R,$ with values in $B(\Rad(X)),$ where $\Rad(X)$ is the closed subspace of $L^2(\Omega,X)$ which is generated by elements of the form $\epsilon_n \otimes x$ with $x \in X$ and $(\epsilon_n)_{n \in \Z}$ a sequence of independent Rademacher variables over the probability space $\Omega$ (cf. Section \ref{Sec Prelims Rad}).
This means that $\sum_{n \in \Z} \epsilon_n \otimes x_n \mapsto \sum_{n \in \Z} \epsilon_n \otimes \frac{1}{|-is-1|^{\alpha+\delta}} \exp((-is-1)2^nA)x_n.$
As $|-is-1| \cong (\frac{\pi}{2} - |\theta|)^{-1}$ for the choice $s = \tan \theta$ and $t^2 = 1 + s^2,$ so that $e^{i\theta}t = 1 + is,$ one sees that the assumption in (2) implies that $\{ \frac{1}{|-is-1|^\alpha} \exp((-is-1)2^n A) :\: n \in \Z \}$ is $R$-bounded over $X$ with uniform $R$-bound in $s \in \R.$ 
Thus, 
\[ \left\| \frac{1}{|-is-1|^\delta} \left\| \left(\frac{1}{|-is-1|^\alpha} \exp((-is-1)2^n A)\right)_{n \in \Z} \right\|_{B(\Rad(X))} \right\|_{L^r(\R,ds)}  \lesssim \| \frac{1}{|-is-1|^\delta} \|_{L^r(\R,ds)}, \]
which is finite by the choice $\delta r > 1.$
The $L^{r'}(\R,ds)$ norm of the expression in the second bracket of \eqref{Equ Proof Prop norm Cr} is estimated by
\[
\| \check{f}(s-i) |-is-1|^{\alpha + \delta} \|_{L^{r'}(\R,ds)} = \| [f \exp]\ck(s) |-is-1|^{\alpha + \delta} \|_{L^{r'}} \lesssim \|f \exp\|_{\Sobolev^\beta_r} \lesssim \|f\|_{\Sobolev^\beta_r},
\]
where the last estimate follows from $\supp f \subset [\frac12,2].$
Note that $\Rad(X)$ has the same type and cotype as $X$ as a closed subspace of $L^2(\Omega,X).$
By Proposition \ref{Prop Hytonen Veraar}, it follows that
\[ \left\{ \left( f(2^n A) \right)_{n \in \Z} :\: f \in C^\infty_c, \: \supp f \subset [\frac12, 2],\: \|f\|_{\Sobolev^\beta_r} \leq 1 \right\}\text{ is }R\text{-bounded over }\Rad(X). \]
This implies that also
\begin{equation}\label{Equ Proof Thm Sufficient conditions Sgr}
\left\{ f(2^n A) :\: f \in C^\infty_c, \: \supp f \subset [\frac12, 2],\: \|f\|_{\Sobolev^\beta_r} \leq 1, \: n \in \Z \right\}\text{ is }R\text{-bounded over }X.
\end{equation}
Indeed, let $x_i \in X,\: n_i \in \Z$ and $f_i \in C^\infty_c$ such that $\supp f_i \subset [\frac12,2]$ and $\|f_i\|_{\Sobolev^\beta_r} \leq 1.$
Put $y_i = \epsilon_{n_i} \otimes x_i \in \Rad(X).$ 
Then, with $(\epsilon'_i)_{i}$ being another sequence of independent Rademachers over a different probability space $\Omega',$ we have
\begin{align*}
\E' \| \sum_i \epsilon_i' f_i(2^{n_i}A) x_i \| & = \E \E' \| \sum_i \epsilon_i' \epsilon_{n_i} f_i(2^{n_i}A) x_i \| = \E \E' \| \sum_i \epsilon_i' (f_i(2^nA))_{n \in \Z} (y_i) \| \\
& \lesssim \E \E' \| \sum_i \epsilon_i' y_i \| = \E' \| \sum_i \epsilon_i' x_i \|.
\end{align*}
This shows \eqref{Equ Proof Thm Sufficient conditions Sgr}, and thus \eqref{Equ Localized R-bound}.\\

The converse statement follows by applying Lemma \ref{Lem Mihlin norms of functions}.
\end{proof}

\begin{rem}
\label{rem-semigroup-auxiliary-calculus}
In Theorem \ref{Thm Sufficient conditions Sgr}, the H\"ormander functional calculus is a consequence of $R$-boundedness conditions on the analytic semigroup generated by $-A.$
If one weakens the assumptions to norm bound conditions as in the following, one gets by the same technique a weaker Sobolev functional calculus, but not necessarily a $\Hor^\alpha_p$ calculus, cf. Subsection \ref{subsec-Kalton}.

Let $A$ be a $0$-sectorial operator on some Banach space $X.$
Assume that for some $\alpha > 0,$
\[ \|\exp(-zA)\| \leq C \left( \frac{|z|}{\Re z} \right)^\alpha \quad (z \in \C_+). \]
Then for each $R > 0,$ the set $\{ f(A) : \: f \in \Sobolev^\beta_r,\: \supp f \subset [0,R],\:\|f\|_{\Sobolev^\beta_r} \leq 1 \}$ is $R$-bounded,
where $r \leq 2,\: \frac1r > \frac{1}{\type X} - \frac{1}{\cotype X}$ and $\beta > \alpha + \frac1r.$
Furthermore, $A$ has an auxiliary calculus $\Phi_A : \Hor^\beta_r \to B(D(\theta),X)$ for any $\theta > 0.$
\end{rem}

\begin{proof}
Let $\delta > \frac1r > \frac{1}{\type X} - \frac{1}{\cotype X}$ and $\beta = \alpha + \delta.$
Clearly the assumptions imply that $\{ |1 + is|^{-\alpha} \exp(-(1+is)A):\: s \in \R \}$ is bounded.
Then as in the proof of Theorem \ref{Thm Sufficient conditions Sgr}, we have for $f \in \Sobolev^{\beta}_r$
\begin{align*}
f(A)x & = \frac{1}{2\pi} \int_\R \check{f}(s-i) \exp(-(is+1)A)x ds \\
& = \frac{1}{2\pi} \int_\R \left[\check{f}(s-i) \langle s \rangle^{\beta}\right] \left[ \langle s \rangle^{-\delta}\langle s \rangle^{-\alpha} \exp(-(is+1)A)x\right] ds.
\end{align*}
The second bracket is in $L^r$ by the assumption $\delta r > 1.$
The first bracket is in $L^{r'}(\R)$ with
$\|\check{f}(s-i) \langle s \rangle^{\alpha + \delta}\|_{L^{r'}} \leq \|f \exp\|_{\Sobolev^\beta_r} \lesssim \|f\|_{\Sobolev^\beta_r}$ by the Hausdorff-Young inequality, as soon as $f$ has support in $[0,R].$
Then the first statement follows with Proposition \ref{Prop Hytonen Veraar}.

For the second statement, note that the first part shows $\|f(A)\| \lesssim \|f\|_{\Sobolev^\beta_r} \cong \|f\|_{\Sobexp^\beta_r}$ for any $f$ with $\supp f \subset [\frac12,2].$
Since $2^{-n} A$ satisfies the same assumptions as $A$ for any $n \in \Z,$ we deduce
$\|f(2^{-n}A)\| \lesssim \|f\|_{\Sobexp^\beta_r}$ for any $f$ with $\supp f \subset [\frac12,2],$ uniformly in $n \in \Z.$
Now we take some dyadic partition of unity $(\dyad_n)_{n \in \Z},$ $\theta > 0$ and recall $\rho(\lambda) = \lambda / (1 + \lambda)^2.$
We have
\begin{align*}
\| (\rho^\theta f)(A) \| & = \| \sum_{n \in \Z} (\dyad_n \rho^\theta f)(A) \| = \| \sum_{n \in \Z} [\dyad_0 \rho^\theta(2^n \cdot) f(2^n \cdot) ](2^{-n} A) \| \\
& \lesssim \sum_{n \in \Z} \| \dyad_0 \rho^\theta(2^n \cdot) f(2^n \cdot) \|_{\Sobexp^\beta_r} \lesssim \sum_{n \in \Z}\|\dyad_0 \rho^{\theta/2}(2^n \cdot)\|_{\Sobexp^\beta_r} \|\rho^{\theta/2}(2^n \cdot) f(2^n \cdot)\|_{\Sobexp^\beta_r} \\
& \lesssim \sum_{n \in \Z} 2^{-|n|\theta/2} \|\rho^{\theta/2}f\|_{\Sobexp^\beta_r} \lesssim \|f\|_{\Hor^\beta_r},
\end{align*}
where the last but one estimate follows similarly to the proof of Theorem \ref{Thm Localization Principle}, and the last estimate follows from the argument in the proof of Lemma \ref{Lem auxiliary calculus multiplicative}.
Thus, the auxiliary calculus $\Phi_A : \Hor^\beta_r \to B(D(\theta),X)$ is bounded.
\end{proof}

\begin{rem}\label{Rem Norm cond bad Mihlin}~
The connection of the exponent of the $\Hor^\alpha_\infty$ calculus with the growth rate of the analytic semigroup when approaching the imaginary axis as in condition (1) or (2) of Theorem \ref{Thm Sufficient conditions Sgr} has been studied already by Duong in \cite[Section 3]{Duon}.
There, the calculus of a Laplacian operator on a Nilpotent Lie group is investigated.
A H\"ormander calculus is obtained from a kernel estimate of the analytic semigroup.
Note that in Duong's situation, Gaussian estimates for the semigroup are at hand.
For a comparison of Theorem \ref{Thm Sufficient conditions Sgr} with spectral multiplier theorems for Gaussian estimates, we refer to Subsection \ref{Subsec GGE}.
\end{rem}

Next we compare conditions on the wave operator associated with $A,$ i.e. (variants of) the boundary value on the imaginary axis of the analytic semigroup generated by $-A,$ with conditions on the analytic semigroup.
Consider the following assertions.
\begin{align}
\left\{( 1 + |s|2^nA)^{-\alpha} e^{i s 2^nA} : \: n \in \Z \right\}&\text{ is $R$-bounded uniformly in $s \in \R$.}
\label{N_W} \\
\left\{ ( 1 + |s|A)^{-\alpha} e^{isA} : \: s \in \R \right\} & \text{ is }R\text{-bounded.} \label{R_W}
\end{align}
Note that these assertions include that the operators in question are defined on $X$ and bounded.
They are well-defined operators at least on the domain $D_0 = R(e^{-A}),$ (which is dense in $X$ by the analyticity of the dual semigroup $(e^{-tA})'),$ by the formula $(1 + |s| A)^{-\alpha} e^{isA}x = (1 + |s|A)^{-\alpha} e^{(is-1)A}y$ for $x = e^{-A}y \in D_0.$
Operators as in \ref{R_W} are considered in \cite{dLL}, where they are called regularized semigroup, and in particular in \cite[Sections 7.3, 7.4.2]{Ouha}, \cite[Theorems 2,3]{Mu} in connection with spectral multipliers.
Moreover, the link with analytic semigroups on the right half plane is studied in \cite[Theorems 2.2, 2.3]{Bo}.
Clearly, \eqref{R_W} implies \eqref{N_W}.

We have the following sufficient conditions for the $\Hor^\beta_r$ calculus in terms of the wave operators.

\begin{thm}\label{Thm Sufficient conditions Wave}
Let $A$ be a $0$-sectorial operator on a Banach space $X$ with property $(\alpha)$ having an $\HI(\Sigma_\sigma)$ calculus for some $\sigma \in (0,\frac{\pi}{2}).$
Let $r \in (1,2],\: \frac1r > \frac{1}{\type X} - \frac{1}{\cotype X}.$
(See Remark \ref{rem-type-Lp} for the $L^q$-case.)
\begin{enumerate}
\item If $\{ ( 1 + |s| A)^{-\alpha} \exp(isA) :\: s \in \R \}$ is $R$-bounded, then $A$ has an $R$-bounded $\Hor^\beta_2$ calculus for $\beta > \alpha + \frac12.$
\item If $\{ (1 + |s| 2^n A)^{-\alpha} \exp(is 2^n A) :\: n \in \Z \}$ is $R$-bounded uniformly in $s \in \R,$ then $A$ has an $R$-bounded $\Hor^\beta_r$ calculus for $\beta > \alpha + \frac1r.$
\end{enumerate}
Conversely, if $A$ has an $R$-bounded $\Hor^\beta_p$ functional calculus for some $\beta$ and $p \in [1,\infty)$ such that $\beta > \frac1p,$ then $\{ (1 + |s| A)^{-\beta} \exp(isA) :\: s \in \R \}$ is $R$-bounded.
\end{thm}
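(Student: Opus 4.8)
The plan is to reduce both forward implications to Theorem~\ref{Thm Sufficient conditions Sgr} by turning a (semi-/$R$-bounded) family of regularised wave operators $(1+|s|A)^{-\alpha}e^{isA}$ into a corresponding family of analytic semigroup operators on $\C_+$, at the cost only of composing with a bounded holomorphic multiplier of $A$. Here one exploits that, since $\sigma<\frac\pi2$ and $X$ has property~$(\alpha)$, the $\HI(\Sigma_\sigma)$ calculus of $A$ is $R$-bounded \cite[12.8 Theorem]{KuWe}, so that $\{h_j(A):j\}$ is $R$-bounded whenever $\{h_j\}\subset\HI(\Sigma_\sigma)$ is norm-bounded. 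The converse direction is routine and uses only Lemma~\ref{Lem Mihlin norms of functions}(2).

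For the reduction, fix $z\in\C_+$, put $w=-\Im z\in\R$, and set $g_z(\lambda)=e^{-(\Re z)\lambda}(1+|w|\lambda)^\alpha$. Since $\sigma<\frac\pi2$ the exponential factor decays on $\Sigma_\sigma$, and a straightforward maximisation gives $g_z\in\HI(\Sigma_\sigma)$ with $\|g_z\|_{\HI(\Sigma_\sigma)}\lesssim(|z|/\Re z)^\alpha\cong(\frac\pi2-|\arg z|)^{-\alpha}$. On the symbol level $g_z(\lambda)(1+|w|\lambda)^{-\alpha}e^{iw\lambda}=e^{-z\lambda}$, and, using the multiplicativity of the extended and auxiliary calculi of Section~\ref{sec-extended-Hoermander-calculus} together with the compatibility identifying $[(1+|w|\cdot)^{-\alpha}e^{iw\cdot}](A)$ with the operator $(1+|w|A)^{-\alpha}e^{iwA}$ from the statement (the symbol $(1+|w|\cdot)^{-\alpha}e^{(iw-1)\cdot}$ lies in $\HI_0(\Sigma_\sigma)$), one obtains $e^{-zA}=g_z(A)(1+|w|A)^{-\alpha}e^{iwA}$ as an identity of bounded operators. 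Now if the hypothesis of part~(1) holds, then $\{(1+|w|A)^{-\alpha}e^{iwA}:w\in\R\}$ is $R$-bounded, $\{(\frac\pi2-|\arg z|)^{\alpha}g_z(A):z\in\C_+\}$ is $R$-bounded by the remark above (its symbols have uniformly bounded $\HI(\Sigma_\sigma)$ norm), and hence so is the product $\{(\frac\pi2-|\arg z|)^{\alpha}e^{-zA}:z\in\C_+\}$, which is precisely hypothesis~\eqref{R_T} of Theorem~\ref{Thm Sufficient conditions Sgr}(1) with the \emph{same} exponent $\alpha$. If instead the hypothesis of part~(2) holds, then the rescaling $(1+|s|2^nA)^{-\alpha}e^{is2^nA}=(1+|2^ns|A)^{-\alpha}e^{i(2^ns)A}$ shows it reads $\sup_{s\in\R}R(\{(1+|s|2^nA)^{-\alpha}e^{is2^nA}:n\in\Z\})<\infty$; combining this, for each fixed $z$, with $R(\{g_z(2^nA):n\in\Z\})\lesssim\|g_z\|_{\HI(\Sigma_\sigma)}\lesssim(\frac\pi2-|\arg z|)^{-\alpha}$ (the $\HI(\Sigma_\sigma)$ norm being dilation invariant), the factorisation $e^{-z2^nA}=g_z(2^nA)(1+|w|2^nA)^{-\alpha}e^{iw2^nA}$ yields hypothesis~\eqref{N_T} of Theorem~\ref{Thm Sufficient conditions Sgr}(2). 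In both cases the cited theorem then produces the asserted $R$-bounded $\Hor^\beta_2$, respectively $\Hor^\beta_r$, calculus (the $R$-boundedness in part~(2) again using property~$(\alpha)$).

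For the converse, suppose $A$ has an $R$-bounded $\Hor^\beta_p$ calculus with $\beta>\frac1p$. By Lemma~\ref{Lem Mihlin norms of functions}(2) the symbols $(1+|s|\lambda)^{-\beta}e^{is\lambda}$ lie, uniformly in $s\in\R$, in a fixed multiple of the unit ball of $\Hor^\beta_p$; since $(1+|s|\cdot)^{-\beta}e^{(is-1)\cdot}\in\HI_0(\Sigma_\sigma)$, the compatibilities of Lemmas~\ref{Lem auxiliary calculus multiplicative} and~\ref{Lem Soaloc calculus} identify the functional-calculus operators of these symbols with $(1+|s|A)^{-\beta}e^{isA}$, and the $R$-boundedness of the $\Hor^\beta_p$ calculus then gives that $\{(1+|s|A)^{-\beta}e^{isA}:s\in\R\}$ is $R$-bounded.

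The main obstacle is not an analytic one but the bookkeeping for the unbounded symbols: $e^{isA}$, and $e^{-zA}$ for $z\in\C_+$ with $|\arg z|$ close to $\frac\pi2$, are a priori unbounded (the analytic semigroup of a $0$-sectorial operator is directly defined by the $\HI(\Sigma_\sigma)$ calculus only on the subsector $\Sigma_{\pi/2-\sigma}$), so one must verify carefully — via the extended and auxiliary calculi of Section~\ref{sec-extended-Hoermander-calculus} and Lemmas~\ref{Lem Hol}, \ref{Lem auxiliary calculus multiplicative}, \ref{Lem Soaloc calculus} — that the factorisation $e^{-zA}=g_z(A)(1+|w|A)^{-\alpha}e^{iwA}$ holds first on a dense domain and then extends to an identity of bounded operators, so that the regularised-wave hypotheses~\eqref{R_W} and \eqref{N_W} may legitimately be fed into the analytic-semigroup Theorem~\ref{Thm Sufficient conditions Sgr}, whose hypotheses~\eqref{R_T}, \eqref{N_T} are phrased in terms of that semigroup. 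Once this identification is in place, everything follows from Theorems~\ref{Thm Sufficient conditions BIP} and~\ref{Thm Sufficient conditions Sgr} and Lemma~\ref{Lem Mihlin norms of functions}.
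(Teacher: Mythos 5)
Your proof is correct and follows essentially the same route as the paper: the paper's Proposition \ref{Prop N_T N_W almost equivalent} carries out precisely this reduction of \eqref{R_W} to \eqref{R_T} and of \eqref{N_W} to \eqref{N_T}, by factorising $e^{-zA}$ through the regularised wave operator composed with uniformly bounded $\HI(\Sigma_\omega)$ functions of $A$ (whose images are $R$-bounded thanks to property $(\alpha)$ and the $R$-bounded $\HI$ calculus), handling the domain question on the dense set $D_0=R(e^{-A})$, and then feeding the result into Theorem \ref{Thm Sufficient conditions Sgr}; the converse via Lemma \ref{Lem Mihlin norms of functions} is identical. The only cosmetic difference is that you estimate the single holomorphic factor $g_z(\lambda)=e^{-(\Re z)\lambda}(1+|w|\lambda)^{\alpha}$ uniformly on all of $\C_+$, whereas the paper splits the sector into $|\theta|\le\frac{\pi}{2}-\omega$ (handled directly by the $R$-bounded $\HI$ calculus) and $|\theta|>\frac{\pi}{2}-\omega$ (handled by a three-factor decomposition).
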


\begin{proof}
By Proposition \ref{Prop N_T N_W almost equivalent} below, the assumptions in (1) (resp. (2)) imply the assumptions in (1) (resp. (2)) of Theorem \ref{Thm Sufficient conditions Sgr}, so that (1) and (2) above follow immediately.
For the converse statement, we refer again to Lemma \ref{Lem Mihlin norms of functions}.
\end{proof}

\begin{prop}\label{Prop N_T N_W almost equivalent}
Let $A$ have
a bounded $\HI(\Sigma_\sigma)$ calculus for some $\sigma \in (0,\frac{\pi}{2}).$
Let the underlying Banach space have property $(\alpha).$
Then for $\alpha > 0,$ we have
\[ \eqref{R_W} \Longrightarrow \eqref{R_T}\text{ and }\eqref{N_W} \Longrightarrow \eqref{N_T}.\]
\end{prop}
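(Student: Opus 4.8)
The plan is to reduce both implications to a single elementary factorization. For $z=e^{i\theta}t$ with $t>0$ and $\theta\in(-\frac{\pi}{2},\frac{\pi}{2})$ write $z=a+ib$ with $a=\Re z=t\cos\theta>0$ and $b=\Im z=t\sin\theta$, and note the symbol identity
\[
e^{-z\lambda}=\bigl[(1+|b|\lambda)^{\alpha}e^{-a\lambda}\bigr]\cdot\bigl[(1+|b|\lambda)^{-\alpha}e^{-ib\lambda}\bigr].
\]
The second bracket is (for $s=-b$) exactly a member of the wave-operator family in \eqref{R_W}, while the first bracket will be handled by the $\HI$ calculus. The point is that $|b|/a=|\tan\theta|$ and the function $\theta\mapsto(\frac{\pi}{2}-|\theta|)|\tan\theta|$ extends continuously, hence boundedly, to $[-\frac{\pi}{2},\frac{\pi}{2}]$, so the weight $\frac{\pi}{2}-|\theta|$ is precisely what is needed to control the growth of the first bracket as $|\theta|\to\frac{\pi}{2}$.

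First I would fix $\sigma'\in(\sigma,\frac{\pi}{2})$, put $g_{a,b}(\lambda)=(1+|b|\lambda)^{\alpha}e^{-a\lambda}$, and prove the symbol bound
\[
\bignorm{g_{a,b}}_{\HI(\Sigma_{\sigma'})}\lesssim 1+(|b|/a)^{\alpha}
\]
by substituting $u=a\cos(\sigma')r$ in $\sup_{r>0}(1+|b|r)^{\alpha}e^{-ar\cos\sigma'}$. Combined with the elementary bound above, this yields $\bignorm{(\frac{\pi}{2}-|\theta|)^{\alpha}g_{a,b}}_{\HI(\Sigma_{\sigma'})}\lesssim 1$ and $\bignorm{g_{a,b}}_{\HI(\Sigma_{\sigma'})}\lesssim(\frac{\pi}{2}-|\theta|)^{-\alpha}$, both uniformly in $t>0$ and $|\theta|<\frac{\pi}{2}$. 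Since $A$ has a bounded $\HI(\Sigma_\sigma)$ calculus and $X$ has property $(\alpha)$, this calculus is $R$-bounded on norm-bounded subsets of $\HI(\Sigma_{\sigma'})$ \cite[12.8 Theorem]{KuWe}; as the supremum over a sector is invariant under the dilations $\lambda\mapsto 2^n\lambda$, it follows that $\{(\frac{\pi}{2}-|\theta|)^{\alpha}g_{a,b}(2^nA):n\in\Z\}$ is $R$-bounded uniformly in $t,\theta$, and that $\{g_{a,b}(2^nA):n\in\Z\}$ has $R$-bound $\lesssim(\frac{\pi}{2}-|\theta|)^{-\alpha}$.

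Next I would turn the symbol identity into an identity of bounded operators. It suffices to check it on the dense subspace $D_0=R(e^{-A})$: for $x=e^{-A}y$ the wave operator is defined by $(1+|b|A)^{-\alpha}e^{-ibA}x=(1+|b|A)^{-\alpha}e^{-(1+ib)A}y$, and the symbol $\lambda\mapsto(1+|b|\lambda)^{-\alpha}e^{-(1+ib)\lambda}$ is bounded holomorphic on a (small, $b$-dependent) sector and differs from $(1+\lambda)^{-1}$ by an $\HI_0$ function, hence is admissible for the holomorphic calculus of $A$; applying $g_{a,b}(A)$ and using multiplicativity of the (extended) holomorphic calculus (Lemma \ref{Lem Hol}) collapses the product of symbols to $e^{-(z+1)\lambda}$, so that $e^{-zA}x=e^{-(z+1)A}y$ on $D_0$. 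As $(1+|b|A)^{-\alpha}e^{-ibA}$ is bounded on $X$ by hypothesis, the factorization extends to all of $X$. Since the second factor lies in the family \eqref{R_W} and the $R$-bound of a set of products is at most the product of the $R$-bounds, $\{(\frac{\pi}{2}-|\theta|)^{\alpha}e^{-zA}:t>0,\ |\theta|<\frac{\pi}{2}\}$ is $R$-bounded, which is \eqref{R_T}. For \eqref{N_W}$\Rightarrow$\eqref{N_T} one runs the same argument with $A$ replaced by $2^nA$ simultaneously for all $n\in\Z$: the factorization reads $e^{-e^{i\theta}t2^nA}=g_{a,b}(2^nA)\cdot(1+|b|2^nA)^{-\alpha}e^{-ib2^nA}$, the first family has $R$-bound $\lesssim(\frac{\pi}{2}-|\theta|)^{-\alpha}$ by the preceding paragraph, the second has $R$-bound uniform in $s=-b$ by \eqref{N_W}, and multiplying gives $R(\{e^{-e^{i\theta}t2^nA}:n\in\Z\})\lesssim(\frac{\pi}{2}-|\theta|)^{-\alpha}$ uniformly in $t,\theta$, i.e.\ \eqref{N_T}.

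The step I expect to require the most care is the rigorous justification of the operator factorization on $D_0$, since $(1+|b|A)^{\alpha}$ and the wave operator are a priori only densely defined: one must verify that $\lambda\mapsto(1+|b|\lambda)^{-\alpha}e^{-(1+ib)\lambda}$ is genuinely admissible for the holomorphic calculus on an appropriate sector (shrinking with $|b|$) and that the composition rule of Lemma \ref{Lem Hol} applies to its product with the unbounded multiplier $(1+|b|\lambda)^{\alpha}$, which is where consistency between the $\HI$ calculus and the extended calculus must be invoked. Everything else is either the quoted $R$-bounded $\HI$-calculus machinery or the elementary estimate $(\frac{\pi}{2}-|\theta|)|\tan\theta|\lesssim 1$, and it is precisely this estimate that pins down the exponent $\alpha$ on both sides.
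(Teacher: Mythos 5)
Your proof is correct and follows essentially the same route as the paper's: both peel the wave operator $(1+|s|A)^{-\alpha}e^{isA}$ off the analytic semigroup and absorb the remaining symbol into the $R$-bounded $\HI(\Sigma_{\sigma'})$ calculus furnished by property $(\alpha)$ and \cite[12.8 Theorem]{KuWe}, with the elementary comparison of $\frac{\pi}{2}-|\theta|$ to $\cot\theta$ pinning down the exponent. The only organizational difference is that you use a single two-factor decomposition valid uniformly in $\theta$, whereas the paper first disposes of the range $|\theta|\leq\frac{\pi}{2}-\omega$ directly by the $\HI$ calculus and then, for $|\theta|$ near $\frac{\pi}{2}$ (where $|s|\gtrsim r$), uses the three-factor decomposition $\left[(1+|s|A)^{-\alpha}e^{-isA}\right]\circ\left[\left(\tfrac{r}{|s|}\right)^\alpha(1+rA)^{-\alpha}(1+|s|A)^\alpha\right]\circ\left[(1+rA)^\alpha T(r)\right]$.
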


\begin{proof}
$\eqref{R_W} \Longrightarrow \eqref{R_T}$:
Note first that for any $\omega \in (\sigma,\frac{\pi}{2}),$ we have
\begin{equation}\label{Equ Aux 4}
\{f(A):\:\|f\|_{\infty,\omega} \leq 1\}\text{ is }R\text{-bounded.}
\end{equation}
Indeed, since $X$ has property $(\alpha),$ by \cite[Theorem 12.8]{KuWe}, \eqref{Equ Aux 4} follows.
In particular,
\[
\left\{(\frac{\pi}{2} - | \theta|)^\alpha T(e^{i\theta}t):\:t>0,\,|\theta| \leq \frac{\pi}{2} - \omega \right\}\text{ is }R\text{-bounded.}
\]
Fix some $\omega \in (\sigma,\frac{\pi}{2})$ for the rest of the proof.
Thus it remains to show that
\begin{equation}\label{Equ 1a Proof Lem N_T}
\left\{ (\frac{\pi}{2} - | \theta|)^\alpha T(e^{i\theta} t):\:t>0,|\theta| \in (\frac{\pi}{2}-\omega,\frac{\pi}{2})\right\}\text{ is }R\text{-bounded.}
\end{equation}
We write $e^{i\theta} t = r + is$ with real $r$ and $s.$
Then for $x = e^{-A}y \in D_0,$ we have
\begin{align*}
&\left(\frac{r}{|s|}\right)^\alpha T(r+is) x = \left(\frac{r}{|s|}\right)^\alpha T(r+1+is) y \\
& = \left[ (1+|s| A)^{-\alpha} e^{(-is-1) A}\right] \circ \left[\left(\frac{r}{|s|}\right)^\alpha (1+r A)^{-\alpha} (1+|s| A)^\alpha \right] \circ \left[ (1+ rA)^\alpha T(r)\right]y \\
& = \left[ (1+|s| A)^{-\alpha} e^{-is A}\right] \circ \left[\left(\frac{r}{|s|}\right)^\alpha (1+r A)^{-\alpha} (1+|s| A)^\alpha \right] \circ \left[ (1+ rA)^\alpha T(r)\right]x .
\end{align*}
We show that all three brackets form $R$-bounded sets for $r+is$ varying in $\{ z \in \C\backslash \{ 0 \}:\: |\arg z| \in (\frac{\pi}{2}-\omega,\frac{\pi}{2}) \}.$
Note that for $|\theta| \in (\frac{\pi}{2}-\omega,\frac{\pi}{2}),$ we have $\frac{\pi}{2} - |\theta| \cong \frac{r}{|s|},$
so that this will imply \eqref{Equ 1a Proof Lem N_T} by Kahane's contraction principle.
The assumption \eqref{R_W} implies that the first bracket is $R$-bounded with $s$ varying in $\R.$
We show in a moment that
\begin{equation}\label{Equ 2a Proof Lem N_T}
\left(\frac{r}{|s|}\right)^\alpha(1+r (\cdot))^{-\alpha} (1+|s|(\cdot))^\alpha\text{ is uniformly bounded in }\HI(\Sigma_{\frac{\pi}{2}}).
\end{equation}
Then the fact that the second bracket is $R$-bounded follows from \eqref{Equ Aux 4}.
For $\lambda \in \Sigma_{\frac{\pi}{2}},$
\begin{align*}
 \left(\frac{r}{|s|}\right)^\alpha \left|(1+r \lambda)^{-\alpha} (1+|s|\lambda)^\alpha\right|
& = \left| \frac{\frac{1}{|s|}+\lambda}{\frac1r + \lambda} \right|^\alpha
\\ & \cong \left(\frac{\frac{1}{|s|}+|\lambda|}{\frac1r + |\lambda|}\right)^\alpha
\\ & \lesssim 1,
\end{align*}
since $|s| \gtrsim r$ by the restriction $|\theta| \in (\frac{\pi}{2}-\omega,\frac{\pi}{2}).$
Thus, \eqref{Equ 2a Proof Lem N_T} follows.
Finally, \eqref{Equ Aux 4} with $f(\lambda) = (1+\lambda)^\alpha e^{-\lambda}$ implies that the third bracket is $R$-bounded with $r$ varying in $(0,\infty).$
Now \eqref{R_T} follows since $x$ is from the dense subspace $D_0.$\\

\noindent
$\eqref{N_W} \Longrightarrow \eqref{N_T}:$
The proof is similar to $\eqref{R_W} \Longrightarrow \eqref{R_T}.$
\end{proof}

\begin{rem}
If $A$ satisfies all the assumptions of Theorem \ref{Thm Sufficient conditions Sgr} or \ref{Thm Sufficient conditions Wave}, parts (1) or (2), but is not injective, then one has the following variants for a reflexive Banach space $X.$
The semigroup $\exp(-zA)$ leaves $N(A)$ and $\overline{R(A)}$ invariant and $\exp(-zA)|_{\overline{R(A)}}$ is again an analytic semigroup with generator $A_1,$ where $A = A_1 \oplus 0$ is the decomposition on $\overline{R(A)} \oplus N(A)$ from Subsection \ref{Subsec A B}.
Moreover, this semigroup satisfies the conditions of (1) or (2) on the space $\overline{R(A)}$ in Theorem \ref{Thm Sufficient conditions Sgr}.
Thus also the conclusions of (1) and (2) hold for $A_1$ and $A.$
A similar remark holds for (1) and (2) of Theorem \ref{Thm Sufficient conditions Wave}.
\end{rem}

\begin{rem}
We end this section with a sufficient condition for the H\"ormander calculus in terms of an $R$-boundedness condition on resolvents.
It is not optimal: we loose one order in the differentiation parameter instead of $\frac12$ when passing from $R$-bounded resolvents to the functional calculus.

Let $A$ be a $0$-sectorial operator with $\HI$ calculus on a space with property $(\alpha).$
Consider the folllowing condition.
\begin{equation}
R\left(\{ tR(e^{i\theta}t,A) :\: t > 0\} \right) \lesssim |\theta|^{-\alpha_1} \quad (0 < |\theta| < \pi) \label{Equ R Resolvent}.
\end{equation}
Then \eqref{Equ R Resolvent} implies that $A$ has an $R$-bounded $\Hor^{\alpha_1}_2$ calculus.
Conversely, an $R$-bounded $\Hor^{\alpha}_2$ calculus of $A$ implies that \eqref{Equ R Resolvent} holds with $\alpha_1 = \alpha + 1.$
\end{rem}

\begin{proof}
Using the resolvent identity one can prove in a similar way as in Theorem \ref{Thm Sufficient conditions BIP} that \eqref{Equ R Resolvent} $\Longrightarrow$ \eqref{Equ Square Resolvent} with $\alpha_1 = \alpha_2,$ and
\begin{equation}
\| A^{\frac12} R(e^{i\theta}t,A) x \|_{\gamma(\R_+,dt,X)} \lesssim |\theta|^{-\alpha_2} \|x\| \quad ( 0 < |\theta| < \pi) \label{Equ Square Resolvent}.
\end{equation}
Further, \eqref{Equ Square Resolvent} implies that $A$ has an $R$-bounded $\Hor^{\alpha_2}_2$ calculus.
For this fact and the definition of $\gamma(\R_+,dt,X),$ we refer to \cite{Kr2}.
The converse statement follows from a H\"ormander norm estimate of $t(e^{i\theta}t - (\cdot))^{-1}.$
\end{proof}

\section{Examples and Counterexamples}\label{Sec Examples Counterexamples}

\subsection{Comparison with the H\"ormander theorem for the Laplace operator on $\R^n$}
\label{subsec-comparison}

It is known that the classical H\"ormander spectral multiplier theorem for the Laplace operator on $L^p(\R^d),\: 1 < p < \infty,$ can be improved as follows (see e.g. \cite[Theorem A, p.~7]{COSY}, \cite{Tomas}).
$-\Delta$ has a $\Hor^\beta_2$ calculus on $L^p(\R^d)$ with
\[ \beta > \beta_p := \max \left(\frac{d}{d+1}, d \left|\frac1p - \frac12 \right| \right).\]
How close can we come to this result with our general method?
By the well known $R$-boundedness of the Gaussian kernel (see e.g. \cite[Section 5.4]{KuWe}) we get
\begin{equation}
\label{equ-Laplace-R-bdd-sgr}
R\left(\left\{ \exp(-e^{i\theta} 2^n tA) :\: n \in \Z \right\}\right) \leq C \left( \frac{\pi}{2} - \left|\theta \right| \right)^{-\alpha} \quad \left(t > 0,\: \theta \in \left(-\frac{\pi}{2},\frac{\pi}{2} \right) \right)
\end{equation}
for $A = -\Delta$ and all $p \in (1,\infty),$ and $\alpha = \frac{d}{2}.$
By complex Stein interpolation between $p = 2$ (where $\alpha = 0$) and $p$ close to $1$ or $\infty,$ we can improve the exponent in \eqref{equ-Laplace-R-bdd-sgr} to all $\alpha$ with \[\alpha > \alpha_p = d \left| \frac1p - \frac12 \right|.\]
Then by Theorem \ref{Thm Sufficient conditions Sgr}, $-\Delta$ has a $\Hor^\beta_2$ calculus on $L^p(\R^d)$ of order $\beta > \alpha_p + \frac12.$
Hence our theorem, when applied to $-\Delta$ directly, overestimates the necessary smoothness order for $\beta_p > \frac{d}{d+1}$ by
\[ \alpha_p + \frac12 - \beta_p = \frac12. \]
This gap can be narrowed by considering $A = (-\Delta)^{\frac12}$ instead of $-\Delta.$
Since $A^2 = - \Delta,$ both operators have a H\"ormander calculus of the same order.
But the Poisson semigroup generated by $A$ has an $R$-bound (see \cite[Section 4]{Kr3}) which gives \eqref{equ-Laplace-R-bdd-sgr} with $\alpha = \frac{d-1}{2}$ for all $p \in (1,\infty).$
Again by complex Stein interpolation, we get \eqref{equ-Laplace-R-bdd-sgr} for any exponent larger than
\[ \alpha_p' = (d-1) \left|\frac1p - \frac12 \right|. \]
Theorem \ref{Thm Sufficient conditions Sgr} guarantees now a $\Hor^\alpha_2$ calculus for $A$ or order $\alpha_p' + \frac12.$
Hence the gap $\alpha_p' + \frac12 - \beta_p = \frac12 - | \frac1p - \frac12 |,$ disappears for $p \to \infty$ or $p \to 1.$
In particular, Theorem \ref{Thm Sufficient conditions Sgr} implies a $\Hor^{d/2}_2$-calculus for the Laplace operator on $L^p(\R^d)$ for all $1 < p < \infty.$
The sharp result with $\beta > \beta_p$ mentioned in the beginning requires more advanced methods, e.g. see \cite{COSY}.
For $|\frac1p - \frac12| < \frac{1}{d+1},$ the optimal order of the H\"ormander calculus still seems to be unknown.

\subsection{(Generalized) Gaussian estimates imply the $R$-boundedness of the semigroup}
\label{Subsec GGE}

The considerations of Subsection \ref{subsec-comparison} can be extended to (generalized) Gaussian estimates.
We compare them with the $R$-boundedness condition of the semigroup as in Theorem \ref{Thm Sufficient conditions Sgr}, and compare our spectral multiplier theorem with the ones in the literature.

\begin{defi}
Let $\Omega$ be a topological space which is equipped with a distance $\rho$ and a Borel measure $\mu.$
Let $d \geq 1$ be an integer.
$\Omega$ is called a homogeneous space\index{homogeneous space} of dimension $d$ if there exists $C > 0$ such that for any $x \in \Omega,\,r> 0$ and $\lambda \geq 1:$
\[\mu(B(x,\lambda r)) \leq C \lambda^d \mu(B(x,r)).\]
\end{defi}

Typical cases of homogeneous spaces are open subsets of $\R^d$ with Lipschitz boundary and Lie groups with polynomial volume growth, in particular stratified nilpotent Lie groups (see e.g. \cite{FoSt}).

We will consider operators satisfying the following assumption.

\begin{assumption}\label{Ass Examples}
$A$ is a self-adjoint positive (injective) operator on $L^2(\Omega),$ where $\Omega$ is a homogeneous space of a certain dimension $d.$
Further, there exists some $p_0 \in [1,2)$ such that the semigroup generated by $-A$ satisfies the so-called generalized Gaussian estimate
(see e.g. \cite[(GGE)]{Bluna}):
\begin{equation}\tag{GGE}\label{Equ generalized Gaussian estimate}
\| \chi_{B(x,r_t)} e^{-tA} \chi_{B(y,r_t)} \|_{p_0 \to p_0'} \leq C \mu(B(x,r_t))^{\frac{1}{p_0'}-\frac{1}{p_0}} \exp \left(-c \left(\rho(x,y)/r_t \right)^\frac{m}{m-1} \right) \quad (x,y \in \Omega,\,t>0).
\end{equation}
Here, $p_0'$ is the conjugated exponent to $p_0,\,C,c > 0,\,m \geq 2$ and $r_t = t^{\frac1m},$ $\chi_B$ denotes the characteristic function of a $B,$ where $B(x,r) = \{y \in \Omega:\: \rho(y,x) < r\}$
and $\|\chi_{B_1} T \chi_{B_2}\|_{p_0 \to p_0'} = \sup_{\|f\|_{p_0} \leq 1} \|\chi_{B_1} \cdot T(\chi_{B_2} f)\|_{p_0'}.$
\end{assumption}

If $p_0 = 1,$ then it is observed in \cite{BlKub} that \eqref{Equ generalized Gaussian estimate} is equivalent to the usual Gaussian estimate,
i.e. $e^{-tA}$ has an integral kernel $k_t(x,y)$ satisfying the pointwise estimate (cf. e.g. \cite[Assumption 2.2]{DuOS})
\begin{equation}\tag{GE}\label{Equ Gaussian estimate}
|k_t(x,y)| \lesssim \mu(B(x,t^{\frac1m}))^{-1} \exp\left(-c \left(\rho(x,y)/t^{\frac1m}\right)^{\frac{m}{m-1}}\right)\quad (x,y \in \Omega,\,t>0).
\end{equation}
This is satisfied in particular by sublaplacian operators on Lie groups of polynomial growth \cite{Varo} as considered e.g. in
\cite{MaMe,Chri, Alex, MSt,Duon}, or by more general elliptic and sub-elliptic operators
\cite{Davia, Ouha}, and Schr\"odinger operators \cite{Ouhaa}.
It is also satisfied by all the operators in \cite[Section 2]{DuOS}.

Examples of operators satisfying a generalized Gaussian estimate for $p_0 > 1$ are higher order operators with bounded coefficients and Dirichlet boundary conditions on domains of $\R^d,$ Schr\"odinger operators with singular potentials on $\R^d$ and elliptic operators on Riemannian manifolds as listed in \cite[Section 2]{Bluna} and the references therein.

\begin{thm}\label{Thm R-bounded Blunck Hormander thm}
Assumption \ref{Ass Examples} implies that for any $p \in (p_0,p_0'),$
\[ \left\{ \left( \frac{|z|}{\Re z} \right)^\alpha \exp(-zA) : \: \Re z > 0 \right\} \text{ is }R\text{-bounded on }L^p(\Omega),\]
where $\alpha = d \left| \frac{1}{p_0} - \frac12 \right|.$
Consequently, by Theorem \ref{Thm Sufficient conditions Sgr}, $A$ has an $R$-bounded $\Hor^{\beta}_2$ calculus on $L^p(\Omega)$ for any $\beta > d \left| \frac{1}{p_0} - \frac12 \right| + \frac12.$
\end{thm}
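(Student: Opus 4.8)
The assertion has two halves --- the $R$-boundedness of the conically damped semigroup family on $L^p(\Omega)$ and the H\"ormander calculus --- and the second half is an immediate appeal to Theorem~\ref{Thm Sufficient conditions Sgr}(1). Up to the elementary comparison $\tfrac{\pi}{2}-|\theta|\cong\cos\theta=\tfrac{\Re z}{|z|}$ (for $z=te^{i\theta}$) and Kahane's contraction principle, the family $\{(\tfrac{\Re z}{|z|})^{\alpha}e^{-zA}:\Re z>0\}$ is exactly the set \eqref{R_T} with $\alpha=d\bigl|\tfrac1{p_0}-\tfrac12\bigr|$. The remaining hypotheses of that theorem are in place: $A$ is self-adjoint, positive and injective on $L^2$, hence $0$-sectorial there, and by the Calder\'on--Zygmund theory for non-integral operators (Blunck--Kunstmann; cf.\ \cite{Bluna}) the estimate \eqref{Equ generalized Gaussian estimate} forces the $L^2$-semigroup to be consistent with an analytic semigroup on $L^p(\Omega)$, $p\in(p_0,p_0')$, whose negative generator is $0$-sectorial and carries a bounded $\HI(\Sigma_\sigma)$ calculus; moreover $L^p(\Omega)$ has property $(\alpha)$ and type $\min(p,2)$, cotype $\max(p,2)$, so an admissible $r\in(1,2]$ exists. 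Thus the whole problem reduces to showing that $\{(\tfrac{\Re z}{|z|})^{\alpha}e^{-zA}:\Re z>0\}$ is $R$-bounded on $L^p(\Omega)$.

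The first step is to upgrade the hypothesis to complex time. Self-adjointness gives $\|e^{-zA}\|_{L^2\to L^2}\le1$ for $\Re z\ge0$, and combining this with \eqref{Equ generalized Gaussian estimate} by a Phragm\'en--Lindel\"of (Davies-type) argument produces the complex-time generalized Gaussian estimate
\[
\bignorm{\chi_{B(x,r_{|z|})}e^{-zA}\chi_{B(y,r_{|z|})}}_{p_0\to p_0'}\le C\left(\tfrac{|z|}{\Re z}\right)^{\kappa}\mu(B(x,r_{|z|}))^{\frac1{p_0'}-\frac1{p_0}}\exp\!\left(-c\left(\tfrac{\Re z}{|z|}\right)^{\frac1{m-1}}\!\left(\rho(x,y)/r_{|z|}\right)^{\frac{m}{m-1}}\right)
\]
for all $x,y\in\Omega$ and $\Re z>0$, with $C,c,\kappa>0$ depending only on $d,m,p_0$ and the constants in \eqref{Equ generalized Gaussian estimate}.

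The main step turns this off-diagonal information into $R$-boundedness on $L^p(\Omega)$. For fixed $z$, summing the complex estimate over the dyadic annuli $\rho(x,y)\sim2^kr_{|z|}$ and playing the volume doubling of $\Omega$ against the modified Gaussian tail yields a genuine $L^{p_0}\to L^{p_0'}$ bound for $e^{-zA}$ at the single scale $r_{|z|}$, with constant $\lesssim(\tfrac{|z|}{\Re z})^{\kappa}\sup_x\mu(B(x,r_{|z|}))^{\frac1{p_0'}-\frac1{p_0}}$ and the same off-diagonal decay. By the Marcinkiewicz--Zygmund theorem these bounds persist with the same constants for the $\ell^2$-valued (equivalently $\Rad$-valued) extensions $(f_j)_j\mapsto(e^{-zA}f_j)_j$, since $\Rad(L^p)$ is isomorphic to a closed subspace of $L^p(\Omega;\ell^2)$. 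Running the Blunck--Kunstmann weak-type $(p_0,p_0)$ criterion (equivalently, the $R$-boundedness criterion of Kunstmann--Uhl) on this vector-valued operator, and interpolating with the trivial $L^2(\Omega;\ell^2)$-bound (where the $R$-bound is $\le1$), one obtains
\[
\Bignorm{\Bigl(\sum_j\Bigl(\tfrac{\Re z_j}{|z_j|}\Bigr)^{2\alpha}\bigl|e^{-z_jA}f_j\bigr|^2\Bigr)^{\frac12}}_{L^p}\le C'\,\Bignorm{\Bigl(\sum_j|f_j|^2\Bigr)^{\frac12}}_{L^p},\qquad\alpha=d\Bigl|\tfrac1{p_0}-\tfrac12\Bigr|,
\]
for all finite families with $\Re z_j>0$; since $L^p(\Omega)$ has finite cotype this is precisely the $R$-boundedness of $\{(\tfrac{\Re z}{|z|})^{\alpha}e^{-zA}:\Re z>0\}$. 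The value $\alpha=d|\tfrac1{p_0}-\tfrac12|$ emerges from balancing the factor $(\tfrac{|z|}{\Re z})^{\kappa}$ and the volume term $\mu(B)^{1/p_0'-1/p_0}$ against the interpolation parameter joining the GGE endpoints to $L^2$, and it is uniform in $p\in(p_0,p_0')$. Feeding this into Theorem~\ref{Thm Sufficient conditions Sgr}(1) delivers the $R$-bounded $\Hor^\beta_2$ calculus for every $\beta>d|\tfrac1{p_0}-\tfrac12|+\tfrac12$.

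I expect the conversion in the third step to be the crux. Obtaining $R$-boundedness --- not merely norm boundedness --- on $L^p$ with $p$ away from $2$ genuinely requires the non-integral Calder\'on--Zygmund machinery run in the $\Rad$-valued setting, and pinning the exponent of $\tfrac{\Re z}{|z|}$ down to exactly $d|\tfrac1{p_0}-\tfrac12|$ and no worse forces careful tracking of constants through the annulus summation, the Marcinkiewicz--Zygmund extension and the final interpolation. (That one cannot avoid the kernel and argue from a bare norm bound is illustrated by the example following Corollary~\ref{Cor norm to R bound analytic}.) The preliminary complex-time estimate is routine but must be carried out with explicit control of the blow-up $(\tfrac{|z|}{\Re z})^{\kappa}$ and of the weakening $(\tfrac{\Re z}{|z|})^{1/(m-1)}$ of the Gaussian constant, since every later exponent is read off from it.
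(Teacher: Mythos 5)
Your proposal is correct and follows essentially the same route as the paper: the paper likewise extends the generalized Gaussian estimate to complex time via the Davies/Phragm\'en--Lindel\"of argument (citing Blunck's Theorem~2.1, which gives the factor $(\cos\theta)^{-d(1/p_0-1/2)}$ explicitly), reorganizes the off-diagonal bounds over annuli, and then invokes the non-integral Calder\'on--Zygmund extrapolation machinery of Blunck--Kunstmann and Kunstmann together with property $(\alpha)$ to obtain the $R$-bound on $L^p(\Omega)$, before feeding the result into Theorem~\ref{Thm Sufficient conditions Sgr}. The only differences are matters of packaging (the paper first passes from $p_0\to p_0'$ to $p_0\to 2$ bounds and lets Kunstmann's extrapolation theorem absorb the vector-valued/interpolation step that you spell out by hand), not of substance.
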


\begin{proof}
By \cite[Proposition 2.1]{BlKua}, the assumption \eqref{Equ generalized Gaussian estimate} implies that
\[ \| \chi_{B(x,r_t)} e^{-tA} \chi_{B(y,r_t)}\|_{p_0\to 2} \leq C_1 \mu(B(x,r_t))^{\frac{1}{2}-\frac{1}{p_0}} \exp(-c_1 (\rho(x,y)/r_t)^{\frac{m}{m-1}}) \quad (x,y \in \Omega,\,t>0)\]
for some $C_1,c_1>0.$
By \cite[Theorem 2.1]{Blun}, this can be extended from real $t$ to complex $z = te^{i\theta}$ with $\theta \in (-\frac{\pi}{2},\frac{\pi}{2}):$
\[ \|\chi_{B(x,r_z)} e^{-zA} \chi_{B(y,r_z)} \|_{p_0 \to 2} \leq C_2 \mu(B(x,r_z))^{\frac{1}{2}-\frac{1}{p_0}} (\cos \theta )^{-d(\frac{1}{p_0} -\frac{1}{2})} \exp(-c_2 (\rho(x,y)/r_z)^{\frac{m}{m-1}}),\]
for $r_z = (\cos \theta )^{-{\frac{m-1}{m}}} t^{\frac1m},$ and some $C_2,c_2 >0.$
By \cite[Proposition 2.1 (i) (1) $\Rightarrow$ (3) with $R = e^{-zA},\,\gamma =\alpha = \frac{1}{p_0} - \frac12,\,\beta = 0,\,r = r_z,\,u=p_0$ and $v = 2$]{BlKua}, this gives for any $x \in \Omega,\,\Re z > 0$ and $k \in \N_0$
\[ \|\chi_{B(x,r_z)} e^{-zA} \chi_{A(x,r_z,k)}\|_{p_0 \to 2} \leq C_3 \mu(B(x,r_z))^{\frac12 - \frac{1}{p_0}} (\cos \theta )^{-d(\frac{1}{p_0} -\frac{1}{2})} \exp(-c_3 k^{\frac{m}{m-1}}),\]
where $A(x,r_z,k)$ denotes the annular set $B(x,(k+1)r_z)\backslash B(x,k r_z).$
By \cite[Theorem 2.2 with $q_0 = p_0,\,q_1 = s = 2,\rho(z) = r_z$ and $S(z) = (\cos \theta)^{d(\frac{1}{p_0}-\frac{1}{2})} e^{-zA}$]{Kuns} and property $(\alpha),$
we deduce that
\[ \{ (\cos \theta)^{d(\frac{1}{p_0}-\frac{1}{2})} e^{-zA} :\: \Re z > 0\} \]
is $R$-bounded.
The first part of the theorem is shown.
Then the second part follows from Theorem \ref{Thm Sufficient conditions Sgr}, noting that $A$ has an $\HI$ calculus on $L^p(\Omega)$ \cite[Corollary 2.3]{Blun}.
\end{proof}

\begin{rem}\label{Rem R-bounded Blunck Hormander thm}
Theorem \ref{Thm R-bounded Blunck Hormander thm} improves (if $p_0 > 1$) the smoothness order of the spectral multiplier theorem in \cite[Theorem 1.1]{Bluna} with same assumptions, from $\displaystyle \frac{d}{2} + \frac{1}{2} +\epsilon$ in \cite{Bluna} to $\displaystyle d \left|\frac{1}{p_0} - \frac{1}{2} \right| + \frac12 + \epsilon.$
Note that \cite{Bluna} obtains also a weak-type result for $p = p_0.$
In \cite{KuU}, \cite[Theorem 6.4 a)]{Uhl}, under the assumptions of Theorem \ref{Thm R-bounded Blunck Hormander thm}, a $\Hor^{\gamma}_r$ calculus with $\gamma > (d+1) | \frac{1}{p} - \frac{1}{2} |$ and $r > | \frac12 - \frac1p |^{-1}$ is derived.
Note that $\Hor^{\gamma}_r$ is larger than $\Hor^\beta_2$ by the Sobolev estimate in Lemma \ref{Lem Elementary Mih Hor}.
In the classical case of Gaussian estimates, i.e. $p_0 = 1,$ \cite{DuOS} yields a $\Hor^{\alpha_2}_\infty$ calculus under Assumption \ref{Ass Examples}
and even a $\Hor^{\alpha_2}_2$ calculus for many examples, e.g. homogeneous operators, with the better derivation order $\alpha_2 > \frac{d}{2}.$
In \cite{COSY}, further improvements on the differentiation order are obtained by making additional assumptions on the operator $A,$ e.g. ``restriction estimates''.
However, Theorem \ref{Thm R-bounded Blunck Hormander thm} improves on all the above cited spectral multiplier theorems in that it includes the $R$-boundedness of the H\"ormander calculus.
\end{rem}

\begin{rem}
The theorem also holds for the weaker assumption that $\Omega$ is an open subset of a homogeneous space $\tilde{\Omega}.$
In that case, the ball $B(x,r_t)$ on the right hand side in \eqref{Equ generalized Gaussian estimate} is the one in $\tilde{\Omega}.$
This variant can be applied to elliptic operators on irregular domains $\Omega \subset \R^d$ as discussed in \cite[Section 2]{Bluna}.
\end{rem}

\begin{rem}
\label{rem-selfadjoint-interpolation}
In Theorem \ref{Thm R-bounded Blunck Hormander thm}, the operator $A$ was assumed to be self-adjoint, and thus, admits a functional calculus $L^\infty \to B(L^2(\Omega)).$
The space $L^\infty = L^\infty((0,\infty);d\mu_A)$ is larger than $\Ha,$ and one can use this fact to ameliorate the functional calculus of $A$ on $L^q(\Omega)$ by complex interpolation.
One obtains that under the hypotheses of Theorem \ref{Thm R-bounded Blunck Hormander thm}, $A$ has a $\Hor^\alpha_q$ calculus on $L^p(\Omega)$ with $ p \in (p_0,p_0')$ and 
\[ \alpha > d \left( \frac{1}{p_0} - \frac12 \right) \frac{ \left|\frac1p - \frac12 \right|}{\frac{1}{p_0} -\frac12 } + \frac1q \text{ and }\frac1q > \frac12 \frac{\left|\frac1p - \frac12 \right|}{\frac{1}{p_0} - \frac12} .\]
\end{rem}

\begin{rem}
In a forthcoming paper \cite{DeKr}, we will also show a H\"ormander theorem for $A \otimes \Id_Y$ on $X = L^p(\Omega;Y)$ for many self-adjoint $A$ such that $\exp(-tA)$ has Gaussian estimates, where $Y$ is any UMD Banach lattice.
\end{rem}

In the rest of this section, we show by way of examples and counterexamples to what extent our main theorems from Sections \ref{Sec 6 Hormander} and \ref{sec-Semigroup} are optimal.

\subsection{In \eqref{R_T},\eqref{N_T},\eqref{N_W},\eqref{R_W}, $R$-bounds cannot be replaced by simple norm bounds in Theorems \ref{Thm Sufficient conditions Sgr} and \ref{Thm Sufficient conditions Wave}}
\label{subsec-Kalton}

We will show this by way of counterexamples.

\begin{thm}\label{Thm Kalton counterexample 1}
Let $\alpha \in (0,1).$
Then there exists a $0$-sectorial operator on a super-reflexive space $X$ with property $(\alpha)$ such that
\begin{enumerate}
\item $A$ has a bounded $\HI(\Sigma_\omega)$ calculus for any $\omega > 0.$
\item $A$ does not have a bounded $\Hor^\beta_2$ calculus for any $\beta > \frac12.$
\item $\left\{ \left( \frac{\Re z}{z} \right)^\alpha \exp(-zA) :\: \Re z > 0\right\}$ is bounded. \item $\left\{ \left(1+|s|A \right)^{-(\alpha + \epsilon)} e^{isA} :\: s \in \R \right\}$ is bounded for any $\epsilon > 0.$  
\item The sets in (3) and (4) are not $R$-bounded even if $\alpha$ is replaced by any large number $\alpha'.$
\end{enumerate}

Moreover, there is the following variant.
Let $\theta \in (0,\pi).$
There exists a $0$-sectorial operator on a super-reflexive space $X$ with property $(\alpha)$ such that 
\begin{enumerate}
\item $A$ has a bounded $\HI(\Sigma_\omega)$ calculus for any $\omega > \alpha \theta$ but for no $\omega < \alpha \theta.$
\item Statements (2) - (5) above hold.
\end{enumerate}
\end{thm}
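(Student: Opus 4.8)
The plan is to exhibit $A$ as a Kalton-type block diagonal operator. Fix $\alpha\in(0,1)$. On a suitable super-reflexive Banach space $X$ with property $(\alpha)$ --- for concreteness an $\ell^2$-sum $X=\bigl(\bigoplus_{n}Y_n\bigr)_{\ell^2}$ of finite dimensional spaces $Y_n=\ell^{p_n}_{d_n}$ with $p_n\to 2$ --- one takes $A=\bigoplus_n A_n$, where each $A_n\in B(Y_n)$ is a small perturbation, by a suitably weighted nilpotent shift, of a diagonal operator with geometrically spaced positive spectrum. These are the building blocks used by Kalton, Kalton--Lancien and Kalton--Weis (see also \cite{Fackler}, \cite[2.20 Theorem]{KuWe}) to produce $0$-sectorial operators with bounded $\HI$ calculus at every positive angle that nonetheless fail to be $R$-sectorial. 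The extra ingredient is a calibration of the spectra, dimensions $d_n$ and perturbation sizes so that (i) the $\HI(\Sigma_\omega)$-calculus constant $C_\omega$ of $A$ grows no faster than $\omega^{-\alpha}$ as $\omega\to 0+$, while (ii) for every fixed $\alpha'>0$ the families $\{(\Re z/z)^{\alpha'}e^{-zA_n}\}$ and $\{(1+|s|A_n)^{-\alpha'}e^{isA_n}\}$ over the summands $Y_n$, and the $\Hor^\beta_2$-calculus norms of the $A_n$ for $\beta>\frac12$, all have constants tending to $\infty$ with $n$.

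Granting such an $A$, the verifications go as follows. Item (1) is part of the construction. For (3): if $\Re z>0$ and $z=te^{i\psi}$, then the function $\lambda\mapsto\exp(-e^{i\psi}t\lambda)$ has $\HI(\Sigma_\omega)$-norm at most $1$ for $\omega\le\frac{\pi}{2}-|\psi|$, so $\|e^{-zA}\|\le C_{\frac{\pi}{2}-|\psi|}\lesssim(\frac{\pi}{2}-|\psi|)^{-\alpha}\cong(\Re z/|z|)^{-\alpha}$, which is (3). Item (4) is a purely finite dimensional estimate: on each block $A_n$ one shows $\sup_{n,s}\|(1+|s|A_n)^{-(\alpha+\epsilon)}e^{isA_n}\|_{B(Y_n)}<\infty$, using $(1+|s|A)^{-(\alpha+\epsilon)}e^{isA}x=(1+|s|A)^{-(\alpha+\epsilon)}e^{-(1-is)A}y$ on $D_0=R(e^{-A})$, the semigroup bound (3) (which gives $\|e^{-(1-is)A}\|\lesssim\langle s\rangle^\alpha$), and the extra smoothing $\langle s\rangle^{-\epsilon}$ supplied by the factor $(1+|s|A_n)^{-\epsilon}$ on the high frequencies where the exponential concentrates; this is precisely the kind of quantity the block calibration controls. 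Item (5) is the heart of the matter: the blocks are built so that none of the families in (3), (4) becomes $R$-bounded over $Y_n$ even after multiplying by an arbitrarily large fixed power of $\Re z/|z|$ resp. $(1+|s|A_n)^{-1}$ --- the quantitative form of the Kalton obstruction to $R$-sectoriality.

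Finally, (2) is read off the construction: by Lemma \ref{Lem Sobolev calculus} the $\Sobexp^\beta_2$-calculus norm of each $A_n$ is equivalent to the weak $L^2(\R,\langle t\rangle^{-2\beta}dt)$-norm of $t\mapsto A_n^{it}$, and the blocks are calibrated so that these diverge for every $\beta>\frac12$; hence, by Lemma \ref{Lem Elementary Mih Hor}, $A$ has no bounded $\Hor^\beta_2$ calculus for $\beta>\frac12$. (Equivalently, if it had such a calculus then the converse part of Theorem \ref{Thm Sufficient conditions Sgr}, after the standard upgrade of bounded to $R$-bounded under property $(\alpha)$, would make $\{(\frac{\pi}{2}-|\theta|)^{\beta'}e^{-e^{i\theta}tA}\}$ $R$-bounded, contradicting (5).) For the variant one runs the same construction with the block spectra arranged so that the optimal $\HI$-calculus angle of $A$ equals $\alpha\theta$ instead of $0$, the remaining resolvent estimates being unchanged; statements (2)--(5) then hold, with the evident adjustment that in (3)--(5) the half-plane $\Re z>0$ is replaced by the sector $\Sigma_{\frac{\pi}{2}-\alpha\theta}$, and correspondingly for the wave operators, when $\alpha\theta<\frac{\pi}{2}$.

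The main obstacle is the construction together with (5): one must simultaneously control, on the finite dimensional blocks, three competing quantities --- the $\HI(\Sigma_\omega)$-calculus norm (forced to grow no faster than $\omega^{-\alpha}$), the norms of the weighted semigroup and wave operator families (forced to stay bounded), and the $R$-bounds of those same families together with the $\Hor^\beta_2$-calculus norms (forced to diverge) --- and then assemble the blocks into a single operator on a space with property $(\alpha)$. This balancing act is exactly where Kalton's finite dimensional machinery is needed; given it, everything else is a routine consequence of the $\HI$-calculus toolkit developed earlier in the paper.
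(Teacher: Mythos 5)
Your proposal is a programme rather than a proof: the entire content of the theorem is the \emph{existence} of an operator with the listed properties, and you defer exactly that to an unconstructed family of finite-dimensional blocks (``granting such an $A$\dots'', ``this balancing act is exactly where Kalton's finite dimensional machinery is needed''). Nothing in the Kalton--Lancien/Fackler literature you invoke hands you blocks that simultaneously (i) keep the $\HI(\Sigma_\omega)$-constant $O(\omega^{-\alpha})$, (ii) keep the weighted semigroup and wave families uniformly bounded, and (iii) make the corresponding $R$-bounds and the $\Hor^\beta_2$-constants blow up; asserting that such a calibration is possible is precisely the theorem. The paper instead writes down a concrete operator --- Kalton's multiplication operator $Af(x)=e^xf(x)$ on the complex interpolation space $X=[L^2,X_w]_\alpha$, where $X_w$ is the completion of $L^2(\R)$ in $\sup_{a}\|f\chi_{(-\infty,a]}\|_w$ for a Fourier weight $w$ --- and then proves (3) and (4) by an explicit bounded-variation estimate $\|m(A)\|_{X_w\to X_w}\lesssim\int_\R|m'(e^x)e^x|\,dx$ on $X_w$ followed by Stein interpolation with the trivial $L^2$ bound; (2) comes either from the failure of the $\HI$ calculus below angle $\alpha\theta$ (variant) or from superpolynomial growth of $\|A^{it}\|$ for the weight $w(\xi)=e^{-\sqrt{|\xi|}}$ (first part); and (5) is then \emph{free}, by contraposition of Theorems \ref{Thm Sufficient conditions Sgr} and \ref{Thm Sufficient conditions Wave}. (The paper also has to work to verify property $(\alpha)$ of $X$, via an embedding into a Banach lattice and Pisier's theorem --- a point your sum-of-$\ell^{p_n}_{d_n}$ ansatz would handle more easily, but which again presupposes the blocks exist.)

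Two further concrete problems. First, in the variant you replace the half-plane $\Re z>0$ in (3)--(5) by the sector $\Sigma_{\pi/2-\alpha\theta}$; but the theorem asserts (3) on the \emph{full} half-plane, and your derivation of (3) from the $\HI(\Sigma_\omega)$-constant collapses there because the calculus does not exist for $\omega<\alpha\theta$. The paper's direct computation of $\|e^{-zA}\|_{X_\theta\to X_\theta}\le |z|/\Re z$ is exactly what circumvents this, so the ``evident adjustment'' silently proves a weaker statement. Second, your phrasing of (5) (``none of the families \dots becomes $R$-bounded over $Y_n$'') is vacuous for finite-dimensional $Y_n$, where bounded and $R$-bounded coincide; what you need is divergence of the $R$-bounds in $n$, which again is part of the missing construction. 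In short: the logical skeleton (derive (3) from a quantitative $\HI$ bound, get (5) from (2) via the positive theorems) is sound and partly matches the paper, but the example itself --- the heart of the theorem --- is not produced.
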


\begin{proof}
We first prove the variant.
We take the operator $A$ from \cite[Theorem 2.4 and Proposition 2.5]{Kal}.
It is the multiplication operator $Af(x) = e^x f(x)$ defined initially on $L^2(\R).$ 
It is then first extended to the space $\mathcal{H}_\theta$ defined as the completion of $L^2(\R)$ with respect to the norm $\|f\|_\theta^2 = \int_\R e^{-2\theta |\xi|} |\hat{f}(\xi)|^2 d\xi,$ and second to the space $X_\theta$ defined as the completion of $L^2(\R)$ with respect to the norm $\|f\|_{X_\theta} = \sup_{a \in \R} \|f \chi_{(-\infty,a]}\|_\theta.$
Finally, $A$ is regarded on the complex interpolation space $X = [L^2,X_\theta]_\alpha.$
By \cite{CwR}, $X$ is uniformly convex and thus super-reflexive.
Further, at the end of this proof we will show that $X$ has property $(\alpha).$
It is shown in \cite[p.~98]{Kal} that $A$ is $0$-sectorial with dense domain and dense range on $X.$
Further, it is shown in \cite[Proposition 2.5]{Kal} that condition (1) of the proposition holds ($A$ acting on $X$).

Since $\HI(\Sigma_\omega) \hookrightarrow \Hor^\beta_2$ for any $\omega \in (0,\pi)$ and $\beta > \frac12,$ (1) implies that $A$ cannot have a $\Hor^\beta_2$ calculus, so (2) is shown.

This also implies (5) since the $R$-boundedness of one of the sets in (3) and (4) would imply a $\Hor^\beta_2$ calculus according to Theorems \ref{Thm Sufficient conditions Sgr} and \ref{Thm Sufficient conditions Wave}.

It remains to check (3) and (4).
For (3), we remark that $\exp(-e^{i\sigma}r A)f(x) = e_{e^{i\sigma}r}(x)f(x)$ for $f \in L^2(\R)$ and $e_{e^{i\sigma}r}(x) = \exp(-e^{i\sigma}r e^x).$
Similarly to \cite[Proof of Theorem 2.4]{Kal}, we have
\[ e_{e^{i\sigma}r}f = \int_\R -e_{e^{i\sigma}r}'(x) f\chi_{(-\infty,x)}dx \]
as a Bochner integral in $L^2(\R),$ and consequently,
\[\|e_{e^{i\sigma}r}f\|_{X_\theta} \leq \int_\R |e_{e^{i\sigma}r}'(x)| dx \|f\|_{X_\theta}.\]
We have
\begin{align*}
\int_\R |e_{e^{i\sigma}r}'(x)| dx & = \int_\R re^x | \exp(-re^{i\sigma}e^x)|dx \\
& = \int_0^\infty r \exp(-r\cos(\sigma)u)du \\
& = \int_0^\infty \exp(-u) \frac{du}{\cos(\sigma)} \\
& = \frac{1}{\cos(\sigma)}.
\end{align*}
It follows that on $X_\theta,$ we have $\|\exp(-zA)\|_{X_\theta \to X_\theta} \leq \frac{|z|}{\Re z}.$
Moreover, on $L^2(\R),$ we have $\|\exp(-re^{i\sigma}A)\|_{L^2 \to L^2} = \sup_{x \in \R} |\exp(-re^{i\sigma}e^x)| = 1.$
By complex interpolation, it follows that
\[ \|\exp(-zA)\|_{X\to X} \leq \|\exp(-zA)\|_{L^2\to L^2}^{1-\alpha} \|\exp(-zA)\|_{X_\theta \to X_\theta}^\alpha \leq \left( \frac{|z|}{\Re z} \right)^\alpha.\]
Thus, (3) is shown.

For (4), we argue similarly and replace $e_{e^{i\sigma}r}$ by $f_{s,\beta}(x) = (1 + |s|e^x)^{-\beta} \exp(ise^x).$
Then 
\begin{align*}
\|f_{s,\beta}(A)\|_{X_\theta \to X_\theta} & \lesssim \int_\R |f_{s,\beta}'(x)| dx = \int_0^\infty |\frac{d}{du}[f_{s,\beta}(\log(u))]| du \\
& \lesssim \int_0^\infty (1 + |s| u)^{-(\beta + 1)} |s| + |s|(1+|s|u)^{-\beta} du \\
& = \int_0^\infty ( 1 + u )^{-(\beta + 1)} + (1 + u)^{-\beta} du \\
& < \infty
\end{align*}
as soon as $\beta > 1.$
To get an estimate on $X = [L^2,X_\theta]_\alpha,$ apply complex interpolation to the analytic family of operators $\beta \mapsto f_{s,\beta}(A),$ to deduce
\[ \|f_{s,\beta}(A)\|_{X \to X} \leq \|f_{s,0}(A)\|_{L^2 \to L^2}^{1 - \alpha} \|f_{s,1+\epsilon}(A)\|_{X_\theta \to X_\theta}^\alpha < \infty \]
for $\beta = \alpha(1+\epsilon),$ i.e. $\beta > \alpha.$

Now we prove the first part of Theorem \ref{Thm Kalton counterexample 1}, i.e. the $\HI$ calculus angle will be arbitrarily small.
Indeed, we can modify Kalton's example from \cite{Kal} in the following way.
Let $\mathcal{H}_w$ be the completion of $L^2(\R)$ with respect to the norm $\|f\|_w^2 = \int_\R w(\xi)^2 |\hat{f}(\xi)|^2 d\xi,$ where $w:  \R \to (0,1]$ is a weight function.
Hence the original example in \cite{Kal} considers $\mathcal{H}_w = \mathcal{H}_\theta$ with $w(\xi) = \exp(-\theta |\xi|).$
Further take $X_w$ the completion of $L^2(\R)$ with respect to the norm $\|f\|_{X_w} = \sup_{a \in \R} \|f \chi_{(-\infty,a]}\|_w,$
and finally $X = [L^2,X_w]_\alpha,$ the complex interpolation space.
Again, by \cite{CwR}, $X$ is uniformly convex, thus super-reflexive, and we will show at the end that $X$ has property $(\alpha).$
The operator $A$ is again the multiplication operator $Af(x) = e^x f(x).$
Then as in the first part of the proof, one can show that (3) and (4) hold.

Note that 
\[\|A^{it}\|_{X \to X} \cong \|A^{it}\|_{\mathcal{H}_{w^\alpha} \to \mathcal{H}_{w^\alpha}} \cong \sup_{\xi \in \R} \frac{w(\xi + t)^\alpha}{w(\xi)^\alpha},\]
where the first equivalence can be shown as in \cite[Proof of Proposition 2.5]{Kal} (the new weight $w^\alpha$ is the interpolated weight of $w_0(\xi) = 1$ for $L^2$ and $w_1(\xi) = w(\xi)$), and the second follows easily from the equality $(A^{it}f)\hat{\phantom{i}}(\xi) = \hat{f}(\xi - t).$

Choose now the weight $w$ such that $\sup_{\xi \in \R} \frac{w(\xi + t)^\alpha}{w(\xi)^\alpha}$ grows subexponentially in $t,$ but superpolynomially in $t,$ e.g. $w(\xi) = \exp(-\sqrt{|\xi|})$ works.
Then $A$ has a bounded $\HI$ calculus on $X$ since it has one on $L^2$ and $\mathcal{H}_w,$ and the arguments in \cite[Proofs of Theorem 2.4 and Proposition 2.5]{Kal} apply literally.
Then on the one hand the subexponential growth $\|A^{it}\|_{X \to X} \leq C_\epsilon e^{|t|\epsilon}$ implies that $A$ has an $\HI(\Sigma_\omega)$ calculus for any angle $\omega > 0,$ and the superpolynomial growth $\|A^{it}\|_{X \to X} \geq c_\beta (1 + |t|)^\beta$ for any $\beta > 0$ implies by Lemmas \ref{Lem Mihlin norms of functions} and \ref{Lem Elementary Mih Hor} that $A$ cannot have a $\Hor^\gamma_2$ calculus for any $\gamma > 0.$

Let us now show that $X = [L^2,X_w]_\alpha$ has property $(\alpha)$ for $w(\xi)$ either $e^{- \theta|\xi|}$ or $\exp(-\sqrt{|\xi|})$ (or any other bounded weight), thus finishing the proof of both parts above.
Since $X$ is super-reflexive, it has finite cotype.
Thus, according to \cite{Pis}, it suffices to show that $X$ is a subspace of a space with local unconditional structure.
We will show that
\begin{equation}
\label{equ-local-unconditional-structure}
X = [L^2,X_w]_\alpha \hookrightarrow L^\infty(\R,[L^2,\mathcal{H}_w]_\alpha),
\end{equation}
more precisely, $X$ is isomorphic to a subspace of the right hand side.
Considering functions in $L^2$ and $\mathcal{H}_w$ in their Fourier image, these spaces are weighted $L^2$-spaces, hence their complex interpolation is a weighted $L^2$ space again \cite[5.5.3 Theorem]{BeL}, and in particular a Banach lattice.
Then $L^\infty(\R, [L^2,\mathcal{H}_w]_\alpha)$ is a Banach lattice and hence a subspace of a space with local unconditional structure.
It therefore only remains to show \eqref{equ-local-unconditional-structure}.
To this end, consider 
\[j_0 : L^2 \to L^\infty(\R,L^2),\: f \mapsto (a \mapsto f \chi_{(-\infty,a]})\] and 
\[j_1 : X_w \to L^\infty(\R,\mathcal{H}_w),\: f \mapsto (a \mapsto f \chi_{(-\infty,a]}).\]
By definition of $X_w,$ $j_1$ is an isometric embedding, and it is easy to see that $j_0$ also is an isometric embedding.
Both $j_0$ and $j_1$ have a left inverse.
Indeed, for $g \in L^2,$ let $P_g$ be the linear form 
\[P_g : L^\infty(\R,L^2) \to \C,\: (a \mapsto f_a) \mapsto \lim_{a \to \infty} \langle f_a, g \rangle,\]
where $\lim_{a \to \infty}$ is any Banach limit (along integers $a \in \N,$ say).
It is easy to check that the linear form $P(f_a) : L^2 \to \C,\: g \mapsto P_g(f_a)$ is bounded by $\|f_a\|_{L^\infty(\R,L^2)} \| g \|_{L^2},$ so that by the Riesz representation theorem, we have constructed a contraction $P : L^\infty(\R,L^2) \to L^2,\: (f_a) \mapsto P(f_a).$
Further, it is easy to check that $P \circ j_0$ is the identity on $L^2.$
For $j_1,$ consider the analogous construction 
\[Q_g : L^\infty(\R,\mathcal{H}_w) \to \C,\: (a \mapsto f_a) \mapsto \lim_{a \to \infty} \langle f_a, g \rangle_{\mathcal{H}_w},\]
and set $Q(f_a): g \mapsto Q_g(f_a).$
One has $\langle Q j_1(f), g \rangle_{\mathcal{H}_w} = \lim_{a \to \infty} \langle f \chi_{(-\infty,a]}, g \rangle_{\mathcal{H}_w},$ and it is easy to check by Plancherel's theorem that for $f ,g \in L^2 \subseteq \mathcal{H}_w,$ this equals $\langle f , g \rangle_{\mathcal{H}_w}.$
Now conclude by density of $L^2$ in $\mathcal{H}_w$ that $Q j_1$ is the identity on $\mathcal{H}_w.$
Then by the retraction theorem \cite[Section 1.2.4]{Trieb}, we have that 
\[ X = [L^2,X_w]_\alpha \text{ is a complemented subspace of }[L^\infty(\R,L^2),L^\infty(\R,\mathcal{H}_w)]_\alpha.\]
Note that $j_0$ resp. $j_1$ take actually values in $L^\infty_0(\R,L^2)$ resp. $L^\infty_0(\R,\mathcal{H}_w),$ the closure of those functions that can be written as a step function $v(a) = \sum_{k = 1}^N \chi_{A_k}(a) f_k,$ with $A_k \subseteq \R$ measurable and $f_k \in L^2$ resp. $\in \mathcal{H}_w.$
Indeed, using dominated convergence, it is easy to check that 
\[v(a) = \chi_{[\frac1n (N+1) ,\infty)}(a) f + \sum_{k = -N}^N \chi_{[\frac1n k, \frac1n (k+1))}(a) f \chi_{(-\infty,\frac1n k]}\]
approximates $j_0(f)$ in $L^\infty(\R,L^2)$ as $n,N \to \infty.$
Thus, $j_0(L^2) \subseteq L^\infty_0(\R,L^2).$
Since $L^2 \hookrightarrow \mathcal{H}_w,$ one also has $j_1(L^2) \subseteq L^\infty_0(\R,\mathcal{H}_w),$ so by density of $L^2$ in $\mathcal{H}_w,$ $j_1(\mathcal{H}_w) \subseteq L^\infty_0(\R,\mathcal{H}_w).$
We infer that $X$ is a complemented subspace of $[L^\infty_0(\R,L^2),L^\infty_0(\R,\mathcal{H}_w)]_\alpha.$
We consider the $L^\infty_0$ spaces on $\R$ with finite measure, say, $\frac{1}{1 + a^2} da,$ noting that any change of measure results in an isometry on the $L^\infty$ level.
Then the simple functions (step functions such that the $A_k$ have finite measure) with values in $L^2$ are dense in $L^\infty_0(\R,L^2) \cap L^\infty_0(\R,\mathcal{H}_w) = L^\infty_0(\R,L^2).$
We infer that the proof of \cite[5.1.2 Theorem]{BeL} applies, showing that the natural map $[L^\infty_0(\R,L^2),L^\infty_0(\R,\mathcal{H}_w)]_\alpha \hookrightarrow L^\infty(\R,[L^2,\mathcal{H}_w]_\alpha)$ is an isomorphic embedding onto a closed subspace.
We have shown \eqref{equ-local-unconditional-structure} and the proof of the theorem is complete.
\end{proof}

\subsection{The bounded $\HI$ calculus assumption in Theorems \ref{Thm Sufficient conditions BIP} and \ref{Thm Sufficient conditions Sgr} is necessary}~\\

\noindent a)
The bounded $\HI$ calculus in the assumption of Theorem \ref{Thm Sufficient conditions BIP} cannot be omitted as the following example of a $0$-sectorial operator with uniformly bounded imaginary powers, but without any bounded $\HI$ calculus shows.
By Lemma \ref{Lem Elementary Mih Hor}, we have $\HI(\Sigma_\sigma) \hookrightarrow \Hor^\beta_r$ for any $\sigma \in (0,\pi),\:r \in [1,\infty)$ and $\beta > \frac1r$ so that this operator cannot have a $\Hor^\beta_r$ calculus.

Consider the $0$-sectorial operator $A$ on $L^p(\R)$ such that $A^{it}$ is the shift group, i.e. $A^{it}g = g(\cdot+t).$
By \cite[Lemma 5.3]{CDMY}, $A$ does not have an $\HI(\Sigma_\sigma)$ calculus to any positive $\sigma,$ unless $p = 2.$
However, $A^{it}$ is even uniformly bounded, so that the assumptions of Theorem \ref{Thm Sufficient conditions BIP} (2) hold for $\alpha = 0,$ and also for (1) according to Remark \ref{rem-semi-R-bounded}.\\

\noindent b)
The bounded $\HI$ calculus in the assumption of Theorem \ref{Thm Sufficient conditions Sgr} cannot be omitted.
We give an example below of a $0$-sectorial operator without an $\HI(\Sigma_\sigma)$ calculus for any $\sigma \in (0,\pi)$, but satisfying the assumptions of Theorem \ref{Thm Sufficient conditions Sgr} part (1) or (2) with $\alpha = 1,$ on a Hilbert space.
By Lemma \ref{Lem Elementary Mih Hor}, we have $\HI(\Sigma_\sigma) \hookrightarrow \Hor^\beta_r$ for any $\sigma \in (0,\pi),\:r \in [1,\infty)$ and $\beta > \frac1r,$ so that this operator cannot have a $\Hor^\beta_r$ calculus.

Our example will satisfy
\begin{equation}\label{Equ Sgr counterexample}
\|T(e^{i\theta} t)\| \lesssim ( \frac{\pi}{2} - |\theta| )^{-1} \quad (t > 0,\, |\theta| < \frac{\pi}{2}).
\end{equation}
In \cite[Theorem 4.1]{LM}, the following situation is considered, based on an idea of Baillon and Cl\'{e}ment.
Let $X$ be an infinite dimensional space admitting a Schauder basis $(e_n)_{n \geq 1}.$
Let $V$ denote the span of the $e_n$'s.
For a sequence $a = (a_n)_{n \geq 1},$ the operator $T_a : V \to V$ is defined by letting $T_a(\sum_n \alpha_n e_n) = \sum_n a_n \alpha_n e_n$
for any finite family $(\alpha_n)_{n \geq 1} \subset \C.$
Let $a^{(N)}=(a^{(N)}_n)_{n \geq 1}$ be the sequence defined by $a^{(N)}_n = \delta_{n \leq N}.$
It is well-known that for any Schauder basis (even conditional), $T_{a^{(N)}}$ extends to a bounded projection on $X$ and $\sup_N \|T_{a^{(N)}}\| < \infty$ \cite[Chapter 1]{LTz1}.
This readily implies that for any sequence $a = (a_n)_{n \geq 1}$ of bounded variation, $T_a$ extends to a bounded operator, and
\begin{equation}\label{Equ 1 Rem Norm cond bad Mihlin}
\|T_a\| \lesssim \|a\|_{BV} := \|a\|_{\ell^\infty} + \sum_{n = 1}^\infty |a_n - a_{n+1}|.
\end{equation}
In \cite{LM}, it is shown that for $a_n = 2^{-n},$ the bounded linear extension $A: X \to X$ of $T_a$ is a $0$-sectorial (injective) operator, and
that for $f \in \HI(\Sigma_\sigma),\,V$ is a subset of $D(f(A)).$
Further, for $x \in V,$ one has
\begin{equation}\label{Equ 2 Rem Norm cond bad Mihlin}
f(A)x = T_{f(a)}x,
\end{equation}
where $f(a)_n = f(a_n).$
Finally, it is shown in \cite{LM} that if the Schauder basis is conditional, then $A$ does not have a bounded $\HI$ calculus.

Now assume that $X$ is a separable Hilbert space, so that the assumptions of Theorem \ref{Thm Sufficient conditions Sgr} (1) and (2) reduce to norm boundedness in \eqref{Equ Sgr counterexample}.
Clearly, $X$ admits a Schauder basis, and as mentioned in \cite{LM} even a conditional one.
We take a conditional basis and consider the operator $A$ above without a bounded $\HI$ calculus.
By \eqref{Equ 1 Rem Norm cond bad Mihlin} and \eqref{Equ 2 Rem Norm cond bad Mihlin}, \eqref{Equ Sgr counterexample} will follow from
\begin{equation}\label{Equ 3 Rem Norm cond bad Mihlin}
\| (\exp( - t e^{i\theta} 2^{-n}))_n \|_{BV} \lesssim (\frac{\pi}{2}-|\theta|)^{-1} \quad (t > 0,\, |\theta| < \frac{\pi}{2}).
\end{equation}
It is easy to check that
\[|\exp(-t e^{i \theta} 2^{-n}) - \exp(-te^{i\theta} 2^{-(n+1)})| \lesssim 2^{-(n+1)}t \exp(-t \cos(\theta)2^{-(n+1)}).\]
Thus,
\begin{align*}
\| (\exp( - t e^{i\theta} 2^{-n}))_n \|_{BV} & \lesssim 1 + \sum_{n = 1}^\infty 2^{-(n+1)}t \exp(-t \cos(\theta)2^{-(n+1)}) \\
& \lesssim 1 + \int_0^1 st \exp(-t \cos(\theta)s) \frac{ds}{s} \\
& \lesssim 1 + (\cos\theta)^{-1} \int_0^\infty \exp(-s) ds \\
& \cong (\frac{\pi}{2} - |\theta|)^{-1}.
\end{align*}
This shows \eqref{Equ 3 Rem Norm cond bad Mihlin}, and thus $A$ satisfies \eqref{Equ Sgr counterexample} without having an $\HI$ calculus.

\subsection{The order of the resulting $\Hor^\beta_2$ calculus in Theorems \ref{Thm Sufficient conditions BIP}, \ref{Thm Sufficient conditions Sgr} and \ref{Thm Sufficient conditions Wave} cannot be improved in general}
\label{subsec-loss-alpha}

In Theorems \ref{Thm Sufficient conditions BIP}, \ref{Thm Sufficient conditions Sgr} and \ref{Thm Sufficient conditions Wave},
we assume one of several conditions on the differentiation parameter $\alpha$ and conclude the boundedness for a $\Hor^\beta_p$ calculus with some $\beta > \alpha.$
We give now a list of examples showing that the loss from $\alpha$ to $\beta$ is close to being optimal.\\

\noindent \textit{a) The gap between assumptions on the parameter $\alpha$ in (1) and (2) of Theorems \ref{Thm Sufficient conditions BIP}, \ref{Thm Sufficient conditions Sgr} and \ref{Thm Sufficient conditions Wave}, and the order $\alpha + \frac12$ of the $R$-bounded $\Hor^{\alpha+\frac12}_2$ calculus is optimal, even in a Hilbert space.}

\begin{proof}
Let $\alpha = m \in \N$ be given and consider the Jordan block
\[ B =
\left(\begin{array}{llll}
0 & 1 &  & 0 \\
 & 0 & \ddots &  \\
 &  & \ddots & 1 \\
0 &  &  & 0
       \end{array}
\right) \in \C^{(m+1)\times (m+1)}
\]
as an operator on the Hilbert space $X = \ell^2_{m+1}$ and set $A = e^B.$
Then
\[f \circ \log(A) = \left(\begin{array}{llll}
f(0) & \frac{f'(0)}{1!} & \ldots & \frac{f^{(m)}(0)}{m!} \\
0 & \ddots & \ddots &
                \end{array}\right),
\]
at least for $f \circ \log \in \bigcup_{\omega \in (0,\pi)}\Hol(\Sigma_\omega).$
Thus, for $x = (x_0,\ldots,x_m)$ and $y = (y_0,\ldots,y_m) \in \ell^2_{m+1},$
\[ |\spr{A^{it} x}{y}| = |\sum_{k=0}^m \sum_{l=k}^m \frac{(it)^{l-k}}{(l-k)!} x_l y_k| \lesssim \langle t \rangle^m \|x\|\,\|y\|,\]
and taking $x = (0,\ldots,0,1),\,y = (1,0,\ldots,0)$ shows that the exponent $m$ is optimal in this estimate.
Thus, $t \mapsto \langle t \rangle^{-\beta} \spr{A^{it} x}{y}$ belongs to $L^2(\R)$ for all $x,y\in X$ if and only if $\beta > m + \frac12,$
so that $A$ cannot have a $\Hor^\beta_2$ calculus for $\beta \leq m + \frac12.$
We refer to \cite{KrW2} for an adequate adaptation of the $R$-boundedness notion which is equivalent to the $R$-bounded $\Hor^\beta_2$ calculus.

On the other hand, $\|A^{it}\| \cong \langle t \rangle^m,$ so that the assumption in (2) of Theorem \ref{Thm Sufficient conditions BIP} holds with $\alpha = m$, and since $X$ is a Hilbert space, also the assumption in (1).
Furthermore the assumptions in (1) and (2) of Theorem \ref{Thm Sufficient conditions Wave} and thus, by Proposition \ref{Prop N_T N_W almost equivalent}, also of Theorem \ref{Thm Sufficient conditions Sgr} hold, because
\[\|(1+|t|A)^{-m} e^{itA}\| \cong | \frac{d^m}{ds^m}\left[ (1+|t|e^s)^{-m} \exp(ite^s)\right]_{s=0}| \leq C.\]
Therefore, in this example, assumptions (1) and (2) of Theorems \ref{Thm Sufficient conditions BIP}, \ref{Thm Sufficient conditions Sgr} and \ref{Thm Sufficient conditions Wave} hold with $\alpha = m,$ but $A$ does not have an $R$-bounded $\Hor^{m+\frac12}_2$ calculus.
This shows that part (1) of Theorems \ref{Thm Sufficient conditions BIP}, \ref{Thm Sufficient conditions Sgr} and \ref{Thm Sufficient conditions Wave} is sharp.
\end{proof}

\noindent\textit{b) The difference of $\alpha$ in the $R$-bounded $\Hor^\alpha_r$ calculus and assumption (2) in Theorem \ref{Thm Sufficient conditions BIP}.}

\begin{proof}
The $R$-bounded $\Hor^\alpha_r$ calculus implies essentially any of the assumptions in (1) and (2) of Theorems \ref{Thm Sufficient conditions BIP}, \ref{Thm Sufficient conditions Sgr} and \ref{Thm Sufficient conditions Wave} with the same parameter $\alpha,$ but in the converse direction, we have to increase the order of the H\"ormander calculus by essentially $\max(\frac12, \frac{1}{\type X} - \frac{1}{\cotype X}).$
An example showing the optimality of this gap in the case of Theorem \ref{Thm Sufficient conditions BIP} is again a multiplication operator, on a different space, it can be found in \cite[Theorem 5.26]{Kr}.
\end{proof}

\section{Bochner-Riesz means and the $\Hor^\alpha_1$ calculus}\label{Sec Bochner-Riesz}

For $\alpha > 1$ and $u > 0,$ we let $R^{\alpha-1}_u(\lambda) = (1 - \lambda/u)_+^{\alpha - 1}$ be the Bochner-Riesz functions, where $t_+ = \max(t,0).$
In the next theorem, we show that under some mild assumptions that in the presence of an $\HI$ calculus, $R$-boundedness of Bochner-Riesz means and an $R$-bounded $\Hor^\alpha_1$ calculus are essentially equivalent.

\begin{thm}\label{Thm Bochner Riesz}
Let $1 < \beta < \alpha$ and assume that $A$ is $0$-sectorial and has an auxiliary calculus $\Phi_A : \Hor^\beta_1 \to B(D(\theta),X)$ for some $\theta \geq 0,$ and an $\HI(\Sigma_\omega)$ functional calculus for some $\omega \in (0,\pi/2).$
\begin{enumerate}
\item If $A$ has an $R$-bounded $\Hor^\beta_1$ functional calculus, then
$ \{ R^{\alpha - 1}_u(A) :\: u > 0 \}$ is $R$-bounded in $X.$
\item If $ \{ R^{\alpha - 1}_u(A) :\: u > 0 \}$ is $R$-bounded in a Banach space $X$ (resp. in a space with property $(\alpha)$), then $A$ has a bounded (resp. $R$-bounded) $\Hor^\alpha_1$ functional calculus.
\end{enumerate}
\end{thm}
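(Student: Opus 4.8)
The plan is to establish the two implications separately. Part (1) will be immediate from the $\Hor^\beta_1$-norm estimate for Bochner--Riesz functions, while part (2) will be an application of the localization principle of Theorem~\ref{Thm Localization Principle}. For (1), observe that $R^{\alpha-1}_u(\lambda)=R^{\alpha-1}_1(\lambda/u)$, so dilation invariance of the H\"ormander norm (Lemma~\ref{Lem Elementary Mih Hor}(3)) gives $\|R^{\alpha-1}_u\|_{\Hor^\beta_1}=\|R^{\alpha-1}_1\|_{\Hor^\beta_1}$, which is finite by Lemma~\ref{Lem Mihlin norms of functions}(4) since $1<\beta<\alpha$; thus $\{R^{\alpha-1}_u:u>0\}$ is a bounded subset of $\Hor^\beta_1$. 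An $R$-bounded $\Hor^\beta_1$ calculus extends, by the density/closure argument already used after Lemma~\ref{Lem Soaloc calculus} and in the proof of Theorem~\ref{Thm Localization Principle}, to an $R$-bounded algebra homomorphism on all of $\Hor^\beta_1$, whence $\{R^{\alpha-1}_u(A):u>0\}$ is $R$-bounded with $R$-bound at most $\|R^{\alpha-1}_1\|_{\Hor^\beta_1}\,R(\{f(A):\|f\|_{\Hor^\beta_1}\le1\})$.

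For (2), by Theorem~\ref{Thm Localization Principle} it suffices to verify condition \eqref{Equ Localized R-bound} with smoothness index $\alpha$ and $p=1$; the auxiliary calculus $\Phi_A:\Hor^\beta_1\to B(D(\theta),X)$ and the $\HI(\Sigma_\omega)$ calculus are part of the hypotheses. Fix $f\in C^\infty_c(\R_+)$ with $\supp f\subset[\frac12,2]$, $\|f\|_{\Sobolev^\alpha_1}\le1$, and $n\in\Z$. Applying the Bochner--Riesz representation formula of Lemma~\ref{Lem Sobolev calculus}(4) to the operator $2^nA$ (which satisfies the same hypotheses, rescaling preserving $\Hor^\beta_1$) and using $R^{\alpha-1}_u(2^n\,\cdot\,)=R^{\alpha-1}_{2^{-n}u}$, we get for $x$ in the calculus core
\[ f(2^nA)x=\frac{(-1)^m}{\Gamma(\alpha)}\int_0^\infty f^{(\alpha)}(u)\,u^{\alpha-1}\,R^{\alpha-1}_{2^{-n}u}(A)x\,du,\qquad m=\lfloor\alpha\rfloor. \]
Since $\supp f\subset[\frac12,2]$ forces $f^{(\alpha)}$ to vanish on $[2,\infty)$ and $u^{\alpha-1}$ is bounded on $(0,2]$, a routine fractional-derivative estimate of the type used in Section~\ref{sec-Semigroup} (see also \cite{GaMi}) gives $\|u\mapsto u^{\alpha-1}f^{(\alpha)}(u)\|_{L^1(0,\infty)}\lesssim\|f\|_{\Sobolev^\alpha_1}\le1$. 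Hence $f(2^nA)$ lies in a fixed multiple of the closed absolutely convex hull of $\{R^{\alpha-1}_v(A):v>0\}$, which, by Kahane's contraction principle together with \cite[2.13 Theorem]{KuWe}, is $R$-bounded with $R$-bound $\lesssim R(\{R^{\alpha-1}_v(A):v>0\})$; this gives \eqref{Equ Localized R-bound}, uniformly in $f$ and $n$.

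To conclude: if $X$ has property $(\alpha)$, the implication (2)$\Rightarrow$(3) of Theorem~\ref{Thm Localization Principle} yields the $R$-bounded $\Hor^\alpha_1$ calculus. For a general Banach space $X$, taking $f$ a fixed bump in \eqref{Equ Localized R-bound} shows that $\{\dyad(2^{-n}A):n\in\Z\}$ is $R$-bounded, which together with the bounded $\HI(\Sigma_\omega)$ calculus forces $A$ to be $R$-sectorial (alternatively, one writes the resolvent directly as an average of Bochner--Riesz means); then the implication (2)$\Rightarrow$($3'$) of Theorem~\ref{Thm Localization Principle} gives the bounded $\Hor^\alpha_1$ calculus.

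The main obstacle is the $L^1$-bound of the weighted fractional derivative $u^{\alpha-1}f^{(\alpha)}(u)$ by $\|f\|_{\Sobolev^\alpha_1}$ for $f$ supported in the fixed interval $[\frac12,2]$: the compactness of the support is essential (the estimate fails without it), and it is precisely this that makes the Bochner--Riesz means ``resolve'' the $\Hor^\alpha_1$ calculus with no loss of derivatives. Lining up the dyadic scaling in the representation formula so that the single $R$-bounded family $\{R^{\alpha-1}_v(A):v>0\}$ reappears for every $n$ must be done with care; everything else is bookkeeping with the localization principle, together with the standard fact — needed only in the non-$(\alpha)$ case — that an $R$-bounded Littlewood--Paley family upgrades a bounded $\HI$ calculus to $R$-sectoriality.
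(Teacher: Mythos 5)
Your proof is correct and follows essentially the same route as the paper's: part (1) is exactly the dilation-invariance plus Lemma \ref{Lem Mihlin norms of functions} (4) argument, and part (2) verifies \eqref{Equ Localized R-bound} via the representation formula of Lemma \ref{Lem Sobolev calculus} (4) and the bound $\int_0^2 |f^{(\alpha)}(u)|u^{\alpha-1}\,du \lesssim \|f^{(\alpha)}\|_{L^1} \lesssim \|f\|_{\Sobolev^\alpha_1}$, which after your rescaling $R^{\alpha-1}_u(2^n\cdot)=R^{\alpha-1}_{2^{-n}u}$ is the paper's change of variables. The one place you go beyond the paper is the non-property-$(\alpha)$ case, where you correctly observe that the implication (2) $\Rightarrow$ (3') of Theorem \ref{Thm Localization Principle} needs $R$-sectoriality (a point the paper's proof passes over in silence); however, your primary justification --- that $R$-boundedness of $\{\dyad(2^{-n}A):n\in\Z\}$ for a compactly supported, non-holomorphic $\dyad$ together with the $\HI$ calculus ``forces'' $R$-sectoriality --- is not a standard implication and would need proof. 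Your parenthetical alternative is the right one and does work: the subordination identity $e^{-z\lambda}=\frac{z^{\nu+1}}{\Gamma(\nu+1)}\int_0^\infty u^{\nu}(1-\lambda/u)_+^{\nu}e^{-zu}\,du$ exhibits $\left(\frac{\Re z}{|z|}\right)^{\nu+1}e^{-zA}$ as an average of the Bochner--Riesz means against an $L^1$-normalized weight, so $\bigl\{\bigl(\frac{\Re z}{|z|}\bigr)^{\nu+1}e^{-zA}:\Re z>0\bigr\}$ is $R$-bounded, and Laplace-transforming then gives $R$-boundedness of $\{\lambda(\lambda-A)^{-1}\}$ on a sector around the negative real axis, i.e.\ $R$-sectoriality of angle less than $\pi$, which is all that Theorem \ref{Thm Localization Principle} requires.
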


\begin{proof}
(1) It clearly suffices to note that $\sup_{u > 0} \|R_u^{\alpha - 1}\|_{\Hor^\beta_1}$ is finite, which has been proven in Lemma \ref{Lem Mihlin norms of functions} (4).\\

\noindent
(2) Note that the auxiliary calculus plus the $\HI(\Sigma_\omega)$ calculus allows to define $R_u^{\alpha-1}(A)$ as closed operators with dense domain.
 We want to apply Theorem \ref{Thm Localization Principle} and check condition \eqref{Equ Localized R-bound}.
So let $f \in C^\infty_c(\R_+)$ with $\supp f \subset [\frac12, 2]$ and $\|f\|_{\Sobolev^\alpha_1} \leq 1.$
Let $f^{(\alpha)}$ be the Cossar-Riemann-Liouville derivative of $f,$ as defined e.g. in
\cite[p.~1011]{GaMi}.
For functions with compact support, one has $\hat{f^{(\alpha)}}(\xi) = (-i\xi)^\alpha \hat{f}(\xi),\:\xi \in \R,$ so that in particular $\|f^{(\alpha)}\|_{L^1} \lesssim \|f\|_{\Sobolev^\alpha_1}.$
This also implies that $(f(a \cdot))^{(\alpha)}(t) = a^\alpha f^{(\alpha)}(at).$
By Lemma \ref{Lem Sobolev calculus} (4), for $x \in D_A,$
\[ f(2^n A) x = \frac{(-1)^m}{\Gamma(\alpha)} \int_0^\infty 2^{n\alpha} f^{(\alpha)}(2^n u) (u- A)_+^{\alpha - 1} x du = \frac{(-1)^m}{\Gamma(\alpha)} \int_0^\infty 2^{n\alpha} f^{(\alpha)}(2^{n}u) u^{\alpha - 1} R^{\alpha - 1}_u(A) x du , \]
where $m = \lfloor \alpha \rfloor.$
According to the assumptions, $\{R^{\alpha - 1}_u(A):\: u>0 \}$ is $R$-bounded, so that $\{ f(2^n A):\: n \in \Z,\: f\text{ as above} \}$ is $R$-bounded if $\|2^{n\alpha} f^{(\alpha)}(2^{n}u) u^{\alpha - 1}\|_{L^1(du)}$ can be bounded independently of $n \in \Z$ and $f$ as above.
But we have, since $\alpha > 1,$
\begin{align*} \int_0^\infty |2^{n\alpha} f^{(\alpha)}(2^{n} u) u^{\alpha - 1}| du
& = \int_0^\infty |f^{(\alpha)}(u) u^{\alpha -1}| du = \int_0^2 |f^{(\alpha)}(u)| u^{\alpha - 1} du \lesssim \int_0^2 |f^{(\alpha)}(u)| du \\
& \lesssim \|f\|_{\Sobolev^\alpha_1} \leq 1.
\end{align*}
\end{proof}

Note that the auxiliary calculus in the preceding Theorem was only assumed to be able to define the Bochner-Riesz means as closed densely-defined operators, it can be omitted if $A$ is self-adjoint on $L^2(U)$ and $X = L^p(U).$
To our knowledge, $R$-bounded Bochner-Riesz means have been considered for the first time in \cite{BoCl} and \cite{Stem}.
In \cite[Theorem A]{Stem}, see also \cite[Th\'eor\`eme (7.2)]{BoCl}, a functional calculus similar to our $\Hor^\beta_1$ calculus is established,
with the somewhat stronger norm,
\[\|f\|_{L^\infty(\R_+)} + \max_{j = 1,2,\ldots,\beta}\sup_{R > 0} \frac{1}{R} \int_0^R \lambda^j |f^{(j)}(\lambda)| d\lambda,\]
in place of $\max_{j=0,1,\ldots,\beta} \sup_{R > 0} \frac{1}{R} \int_{R/2}^R \lambda^j |f^{(j)}(\lambda)| d\lambda.$
It is assumed there that the Bochner-Riesz means of a self-adjoint operator $A$ are $R$-bounded on $L^p,\:1 < p < \infty$ and the semigroup generated by $-A$ is contractive on all $L^p,\:1 \leq p \leq \infty.$
Note that a self-adjoint operator generating such a semigroup always has an $\HI$ calculus on $L^p,\: 1 < p < \infty$ \cite[Theorem 2]{Co}.

In the following proposition, we give a sufficient criterion for the $R$-boundedness of Bochner-Riesz means.

\begin{prop}\label{Prop N_T to Bochner-Riesz}
Let $A$ be a $0$-sectorial operator and $\nu > \alpha \geq 0.$
Assume that 
\[ \left\{ \left( \frac{\Re z}{|z|} \right)^\alpha e^{-zA} :\: \Re z > 0 \right\}\text{ is }R\text{-bounded.}\]
Then $\{ R^\nu_u(A) :\: u > 0 \}$ is $R$-bounded, too.
\end{prop}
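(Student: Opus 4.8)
The plan is to represent the Bochner-Riesz function $R^\nu_u(\lambda) = (1-\lambda/u)_+^\nu$ as an $R$-bounded superposition of the semigroup operators $e^{-zA}$ for $z$ ranging over the right half-plane, and then quote the hypothesis together with the averaging machinery of Section \ref{Sec Prelims Rad}. By the dilation invariance of everything in sight (Lemma \ref{Lem Elementary Mih Hor} (3) and the fact that $e^{-zA}$ and $e^{-z(2^nA)}$ enjoy the same $R$-bound), it suffices to treat $u = 1$, i.e. to $R$-bound the single operator $R^\nu_1(A) = (1-A)_+^\nu$, with a bound uniform under the dilations $A \mapsto 2^n A$. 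Since $\nu > 0$ is not an integer in general, I would first reduce, via the semigroup law and the composition rule for the extended calculus (Lemma \ref{Lem Soaloc calculus} (c) or Lemma \ref{Lem auxiliary calculus multiplicative}), to the case $0 < \nu < 1$ after peeling off $\lfloor \nu \rfloor$ factors of $(1-A)_+$; alternatively one simply works with $\nu$ large enough that the kernel below is integrable and uses $(1-\lambda)_+^\nu = (1-\lambda)_+^{\nu-\alpha-\epsilon}\cdot(1-\lambda)_+^{\alpha+\epsilon}$ after the half-plane estimate.

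The heart of the argument is the classical subordination-type identity expressing a compactly supported power as a Laplace transform. Writing $\phi(\lambda) = (1-\lambda)_+^\nu$, one has for suitable $\nu$ a representation
\[
(1-\lambda)_+^\nu = \frac{1}{2\pi}\int_{\R} g(s)\, e^{-(1+is)\lambda}\, ds
\]
with $g(s) = e^{1+is}\,\widehat{\phi}(s)$ (a shifted inverse Fourier transform), valid as an identity in $\Hor^\beta_1$ for $1<\beta<\nu$ because $\phi \in \Sobolev^\beta_1$ by Lemma \ref{Lem Mihlin norms of functions} (4) and because the integrand $\lambda \mapsto e^{-(1+is)\lambda}$ lies in $\Hor^\gamma_p$ with polynomial growth $\langle s\rangle^\gamma$ in $s$. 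Splitting the integrand as
\[
\left[\langle s\rangle^{-\delta}\Bigl(\tfrac{1}{\langle s\rangle}\Bigr)^{\alpha} e^{-(1+is)\lambda}\right]\cdot\left[g(s)\,\langle s\rangle^{\alpha+\delta}\right],
\]
the first bracket, viewed via functional calculus, is an $R$-bounded-valued $L^r(\R,ds)$ function for $\delta r > 1$: indeed $\{\langle s\rangle^{-\alpha}e^{-(1+is)A} : s \in \R\}$ is $R$-bounded with uniform bound (restricting the hypothesis to the line $\Re z = 1$, using $\Re z/|z| \cong \langle s\rangle^{-1}$ there), and $\langle s\rangle^{-\delta} \in L^r(\R)$; while the second bracket is in $L^{r'}(\R,ds)$ with norm $\lesssim \|g(s)\langle s\rangle^{\alpha+\delta}\|_{L^{r'}} \lesssim \|\phi\exp\|_{\Sobolev^{\alpha+\delta}_r} \lesssim 1$ by Hausdorff-Young (choosing $r \le 2$), using crucially that $\phi$ has compact support so that the weight $e$ is harmless. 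Proposition \ref{Prop Hytonen Veraar} then yields that $\{R^\nu_1(A)\} = \{\tfrac{1}{2\pi}\int g(s)\,e^{-(1+is)A}\,ds\}$ sits inside an $R$-bounded set; since $2^nA$ satisfies the same semigroup hypothesis with the same $R$-bound, the resulting bound is uniform in $n$, and dilation invariance upgrades this to $R$-boundedness of $\{R^\nu_u(A) : u > 0\}$.

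I expect the main obstacle to be purely bookkeeping rather than conceptual: one must check that the Fourier-integral representation of $(1-\lambda)_+^\nu$ converges as a Bochner integral in the relevant Hörmander/Sobolev space and that the extended $\Hor^\beta_1$ calculus may be applied under the integral sign, exactly as was done for the formula in Lemma \ref{Lem Sobolev calculus} (3)-(4); this forces the restriction $\beta>1$ and the choice $\delta r>1$ with $r\le 2$, which in turn is why $\nu>\alpha$ is needed (we spend $\delta = \nu - \alpha > 1/r > 1$ worth of decay). A secondary point is dealing with non-integer $\nu$ cleanly — here one either uses the product decomposition of $(1-\lambda)_+^\nu$ noted above together with Kahane's contraction principle and the product rule, or observes that the Sobolev estimate $\phi\exp \in \Sobolev^{\alpha+\delta}_r$ holds for all real $\alpha+\delta < \nu$ regardless of integrality, which is the route I would take. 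No genuinely new idea beyond Lemma \ref{Lem Mihlin norms of functions} (4), Proposition \ref{Prop Hytonen Veraar}, and the shift-of-contour trick already used in the proof of Theorem \ref{Thm Sufficient conditions Sgr} is required.
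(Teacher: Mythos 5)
Your starting point is the right one and is essentially the paper's: write $R^\nu_u(A)$ as an inverse-Laplace integral of the semigroup over a vertical line $\Re z = 1/u$ (the paper uses the formula $R^\nu_u(A) = u^{-\nu}\frac{1}{2\pi i}\int_{\Re z = 1/u} z^{-\nu-1}e^{uz}\,e^{-zA}\,dz$ from \cite{GaPy}, which is your $g(s)$ up to normalization), and peel off the $R$-bounded factor $\left(\frac{\Re z}{|z|}\right)^\alpha e^{-zA}$. But your concluding step has two genuine gaps. First, invoking Proposition \ref{Prop Hytonen Veraar} forces you to spend $\delta > 1/r$ with $r \le 2$ (for the Hausdorff--Young step), so you only prove the statement for $\nu > \alpha + \frac12$, and you additionally need type/cotype hypotheses on $X$ that are not in the proposition, which is stated for an arbitrary Banach space and for all $\nu > \alpha$. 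The correct tool here is not Hyt\"onen--Veraar but the elementary convexity result \cite[Corollary 2.14]{KuWe}: if $N(s)$ takes values in a fixed $R$-bounded set $\tau$ and $\|h\|_{L^1} \le C$, then $\int h(s)N(s)\,ds$ lies in $C$ times the closed absolutely convex hull of $\tau$, which is $R$-bounded with constant $\lesssim R(\tau)$. This lets you put \emph{all} the decay $|1+ius|^{\alpha-\nu-1}$ into the scalar kernel, whose $L^1(ds)$-norm is uniformly bounded in $u$ precisely when $\nu > \alpha$ --- no Fourier-side $L^{r'}$ estimate, no Sobolev embedding, no geometry of $X$.

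Second, your passage from a fixed $u$ (or fixed $2^n$) to the full family is not justified: a uniform bound for each singleton $\{R^\nu_u(A)\}$ does not yield $R$-boundedness of $\{R^\nu_u(A) : u>0\}$ (unions of $R$-bounded sets with uniformly bounded $R$-constants need not be $R$-bounded --- this is exactly why Proposition \ref{Prop van Gaans} demands summability), and Proposition \ref{Prop Hytonen Veraar} does not apply with a $u$-dependent kernel and a fixed scalar density. The $\Rad(X)$ trick from the proof of Theorem \ref{Thm Sufficient conditions Sgr} would need property $(\alpha)$ and only treats dyadic $u$. The convex-hull argument resolves this automatically: for every $u>0$ the operator part of the integrand takes values in the \emph{single} $R$-bounded set $\left\{\left(\frac{\Re z}{|z|}\right)^\alpha e^{-zA} : \Re z > 0\right\}$ given by the hypothesis, so the entire family $\{R^\nu_u(A): u>0\}$ sits in one fixed multiple of its closed absolutely convex hull.
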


\begin{proof}
According to \cite[(4.2) and p.~332 Remark]{GaPy}, see also \cite[Proof of Lemma 6.1]{GaPy}, under the hypotheses of the proposition,
one has the formula
\[ R^\nu_u(A) = u^{-\nu} \frac{1}{2\pi i} \int_{\Re z = \frac1u} \frac{e^{-zA}}{z^{\nu+1}} e^{u z} dz. \]
If we write $z = \frac1u + is,$ then $|e^{uz}| = |e^{1+ isu}| = e,$ $|z^{\nu + 1}| = u^{-\nu - 1} |1+ius|^{\nu + 1}$ and $u^\alpha |z|^\alpha = |1 + ius|^\alpha.$
Hence 
\begin{equation}\label{Equ Proof Prop N_T to Bochner-Riesz}
R^\nu_u(A) = \frac{1}{2\pi i} \int_{\Re z= \frac1u} \left[ e^{-zA} \left(\frac{\Re z}{|z|}\right)^\alpha \right] g_u(z) dz
\end{equation}
with $g_u(z)= u^{-\nu} \frac{e^{uz}}{z^{\nu + 1}} \left( \frac{|z|}{\Re z} \right)^\alpha.$
We have $|g_u(z)| = e |1 + ius|^{-\nu - 1} |1 + ius|^\alpha u.$
Since $\nu > \alpha,$ $\int_{\Re z = \frac1u} |g_u(z)| d |z| \leq e \int_\R |1 + ius|^{-(\nu-\alpha+1)} u ds \leq C$
with $C$ independent of $u.$
Now the proposition follows from \eqref{Equ Proof Prop N_T to Bochner-Riesz} together with the assumptions and \cite[Corollary 2.14]{KuWe}.
\end{proof}

\section{Bisectorial and Strip-type Operators}\label{Sec Strip-type Operators}

\subsection{Bisectorial operators}

In this short subsection we indicate how to extend our results to bisectorial operators.
An operator $A$ with dense domain on a Banach space $X$ is called bisectorial of angle $\omega \in [0,\frac{\pi}{2})$ if it is closed, its spectrum is contained in the closure of $S_\omega = \{ z \in \C :\: |\arg(\pm z)| < \omega \},$ and one has the resolvent estimate 
\[\|(I+\lambda A)^{-1}\|_{B(X)} \leq C_{\omega'},\: \forall \: \lambda \not\in S_{\omega'},\: \omega' > \omega .\]
If $X$ is reflexive, then for such an operator we have again a decomposition $X = N(A) \oplus \overline{R(A)},$ so that we may assume that $A$ is injective.
The $\HI(S_\omega)$ calculus is defined as in \eqref{Equ Cauchy Integral Formula}, but now we integrate over the boundary of the double sector $S_\omega.$
If $A$ has a bounded $\HI(S_\omega)$ calculus, or more generally, if we have $\|Ax\| \cong \|(-A^2)^{\frac12}x\|$ for $x \in D(A) = D((-A^2)^{\frac12})$ (see e.g. \cite{DW}), then the spectral projections $P_1,\: P_2$ with respect to $\Sigma_1 = S_\omega \cap \C_+,\: \Sigma_2 = S_\omega \cap \C_-$ give a decomposition $X = X_1 \oplus X_2$ of $X$ into invariant subspaces for the resolvents of $A$ such that the restriction $A_1$ of $A$ to $X_1$ and $-A_2$ of $-A$ to $X_2$ are sectorial operators with $\sigma(A_i) \subset \Sigma_i.$
For $f \in \HI_0(S_\omega)$ we have 
\begin{equation}\label{Equ Bisectorial}
f(A)x = f|_{\Sigma_1}(A_1)P_1 x + f|_{\Sigma_2}(A_2)P_2 x.
\end{equation}
We define the H\"ormander class $\Ha(\R)$ on $\R$ by restrictions: $f \in \Ha(\R)$ iff $f \chi_{\R_+} \in \Ha$ and $f(-\cdot) \chi_{\R_+} \in \Ha.$
Let $A$ be a $0$-bisectorial operator, i.e. $A$ is $\omega$-bisectorial for all $\omega > 0.$
Then $A$ has a $\Ha(\R)$ calculus if there is a constant $C$ so that $\|f(A)\| \leq C \|f\|_{\Ha(\R)}$ for $f \in \bigcap_{0 < \omega < \pi} \HI(S_\omega) \cap \Ha(\R).$
Clearly, $A$ has a $\Ha(\R)$ calculus if and only if $A_1$ and $-A_2$ have a $\Ha$ calculus and in this case \eqref{Equ Bisectorial} holds again.

Let $f_t(\lambda) = \begin{cases} \lambda^{it} :\: \Re \lambda > 0 \\ (-\lambda)^{it} :\: \Re \lambda  < 0 \end{cases}.$
Then $f_t \in \HI(S_\omega)$ for any $\omega \in (0,\frac{\pi}{2}).$
Clearly, one has $f_t(A) = A_1^{it} \oplus (-A_2)^{it}$ on $X = X_1 \oplus X_2.$
Similarly, let $R_u^\alpha(\lambda) = ( 1 - |\lambda| / u )_+^\alpha,$ so that $R_u^\alpha(A) = ( 1 - A_1/u )_+^\alpha \oplus ( 1 - (-A_2)/u )_+^\alpha.$
Finally, let $g_s^\alpha(\lambda) = ( i + |s| \lambda )^{-\alpha} e^{i s \lambda},$ so that $g_s^\alpha(A) = ( i + |s| A_1 )^{-\alpha} e^{i s A_1} \oplus ( i - |s| (-A_2) )^{-\alpha} e^{-i|s|(-A_2)}.$
Then by an obvious modification of the proof of Proposition \ref{Prop N_T N_W almost equivalent}, one can show that if $\{ g_s^\alpha(A) :\: s \in \R \}$ is $R$-bounded then \eqref{R_T} holds for both $A_1$ and $-A_2,$ and that $\sup_{s \in \R}R(\{ g_{2^k s}^\alpha(A) :\: k \in \Z \}) < \infty$ implies that \eqref{N_T} holds for both $A_1$ and $-A_2.$

Then using the projections $P_1$ and $P_2,$ it is clear how our results from Sections \ref{Sec 6 Hormander}, \ref{sec-Semigroup} and \ref{Sec Bochner-Riesz} extend to bisectorial operators.

\subsection{Operators of strip-type}
For $\omega > 0$ we let $\Str_\omega = \{z \in \C :\: |\Im z| < \omega\}$ the horizontal strip of height $2 \omega.$
We further define $\HI(\Str_\omega)$ to be the space of bounded holomorphic functions on $\Str_\omega,$ which is a Banach algebra equipped with the norm $\|f\|_{\infty,\omega} = \sup_{\lambda \in \Str_\omega} |f(\lambda)|.$
A densely defined operator $B$ is called an operator of $\omega$-strip-type if $\sigma(B) \subset \overline{\Str_\omega}$ and for all $\theta > \omega$ there is a $C_\theta > 0$ such that $\|\lambda (\lambda - B)^{-1}\| \leq C_\theta$ for all $\lambda \in \overline{\Str_\theta}^c.$
Similarly to the sectorial case, one defines $f(B)$ for $f \in \HI(\Str_\theta)$ satisfying a decay for $|\Re \lambda| \to \infty$ by a Cauchy integral formula, and says that $B$ has a bounded $\HI(\Str_\theta)$ calculus provided that $\|f(B)\| \leq C \|f\|_{\infty,\theta}.$ 
In this case $f \mapsto f(B)$ extends to a bounded homomorphism $\HI(\Str_\theta) \to B(X).$
We refer to \cite{CDMY} and \cite[Chapter 4]{Haasa} for details.
We call $B$ of $0$-strip-type if $B$ is $\omega$-strip-type for all $\omega > 0.$

There is an analogous statement to Lemma \ref{Lem Hol} which holds for a $0$-strip-type operator $B$ and $\Str_\omega$ in place of $A$ and $\Sigma_\omega,$ and $\Hol(\Str_\omega) = \{ f : \Str_\omega \to \C :\: \exists n \in \N :\: (\rho \circ \exp)^n f \in \HI(\Str_\omega) \},$
where $\rho(\lambda) = \lambda ( 1 + \lambda)^{-2}.$

In fact, $0$-strip-type operators and $0$-sectorial operators with bounded $\HI(\Str_\omega)$ and bounded $\HI(\Sigma_\omega)$ calculus are in one-one correspondence by the following lemma. 
For a proof we refer to \cite[Proposition 5.3.3., Theorem 4.3.1 and Theorem 4.2.4, Lemma 3.5.1]{Haasa}.

\begin{lem}
\label{lem-strip-type}
Let $B$ be an operator of $0$-strip-type and assume that there exists a $0$-sectorial operator $A$ such that $B = \log(A)$.
This is the case if $B$ has a bounded $\HI(\Str_\omega)$ calculus for some $\omega < \pi.$
Then for any $f \in \bigcup_{0 < \omega < \pi} \Hol(\Str_\omega)$ one has
\[ f(B) = (f\circ \log)(A). \]
Note that the logarithm belongs to $\Hol(\Sigma_\omega)$ for any $\omega \in (0,\pi).$
Conversely, if $A$ is a $0$-sectorial operator that has a bounded $\HI(\Sigma_\omega)$ calculus for some $\omega \in (0,\pi),$ then $B = \log(A)$ is an operator of $0$-strip-type.
\end{lem}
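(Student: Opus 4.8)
The plan is to exploit the conformal map $\exp\colon \Str_\omega \to \Sigma_\omega$, with inverse $\log$, available for $\omega\in(0,\pi)$; it induces an isomorphism of the relevant function algebras, and the whole argument transfers the two functional calculi across it. The first step is the two membership facts. We have $\exp\in\Hol(\Str_\omega)$ for every $\omega<\pi$, because $(\rho\circ\exp)^n(z)\,e^z = e^{(n+1)z}(1+e^z)^{-2n}$ stays bounded (indeed decays) as $|\Re z|\to\infty$ and so lies in $\HI(\Str_\omega)$ once $n\geq 1$; dually $\log\in\Hol(\Sigma_\omega)$ for every $\omega<\pi$, because $\rho^n\cdot\log$ has only logarithmic growth and hence lies in $\HI(\Sigma_\omega)$ for $n\geq 1$. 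Consequently the extended strip-type calculus (the strip analogue of Lemma \ref{Lem Hol}) attaches to a $0$-strip-type operator $B$ a closed, densely defined operator $A:=\exp(B)$, and Lemma \ref{Lem Hol} itself attaches to a $0$-sectorial operator $A$ a closed, densely defined operator $B:=\log(A)$.

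Next I would check that $A=\exp(B)$ is genuinely $0$-sectorial when $B$ has a bounded $\HI(\Str_\omega)$ calculus. The spectral inclusion follows from the spectral mapping theorem for the extended calculus (\cite[Chapter 4]{Haasa}): $\sigma(\exp(B))\subseteq\overline{\exp(\sigma(B))}\subseteq\overline{\exp(\overline{\Str_\theta})}=\overline{\Sigma_\theta}$ for every $\theta>0$, so $\sigma(A)\subseteq[0,\infty)$; the remaining structural requirements on $A$ (injective, dense range, dense domain) come out of the extended calculus. For the resolvent estimate, given $\lambda\notin\overline{\Sigma_\theta}$ the function $g_\lambda(z)=\lambda(\lambda-e^z)^{-1}$ belongs to $\HI(\Str_{\theta'})$ for $\theta'<\theta$ with sup-norm bounded uniformly in $\lambda$, and the product and composition rules give $g_\lambda(B)=\lambda(\lambda-A)^{-1}$; combining the bounded $\HI(\Str_\omega)$ calculus with the $0$-strip-type resolvent bounds at the small angles $\theta'$ yields $\|\lambda(\lambda-A)^{-1}\|\leq C_\theta$. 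Finally, applying the composition rule (\cite[Theorem 4.3.1]{Haasa}) to the identity $\log\circ\exp=\Id_{\Str_\omega}$ gives $(\log)(\exp(B))=(\log\circ\exp)(B)=B$, i.e.\ $\log(A)=B$; the boundedness of the $\HI(\Str_\omega)$ calculus is precisely what makes the composition rule apply without domain obstructions, which is why it occurs in the hypothesis.

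The converse runs symmetrically. With $A$ $0$-sectorial and $B=\log(A)$, the spectral mapping theorem gives $\sigma(B)\subseteq\overline{\log(\sigma(A))}\subseteq\overline{\log([0,\infty))}\subseteq\R$, hence $\sigma(B)\subseteq\overline{\Str_\theta}$ for every $\theta>0$; $B$ is densely defined since $D(B)\supseteq D(A)\cap R(A)$ is dense; and the resolvent estimates defining $0$-strip-type are obtained by transferring $h_\mu(\lambda)=\mu(\mu-\log\lambda)^{-1}$, $\mu\notin\overline{\Str_\theta}$, through the bounded $\HI(\Sigma_\omega)$ calculus of $A$ together with its $0$-sectoriality, which supplies the resolvent bounds at the narrow sectors on which the $h_\mu$ are holomorphic. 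The composition rule applied to $\exp\circ\log=\Id_{\Sigma_\omega}$ then gives $\exp(B)=A$, closing the loop between the two constructions.

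The main obstacle --- the reason \cite{Haasa} devotes a whole chapter to this --- is making the composition rule and the spectral mapping theorem precise for the \emph{extended}, i.e.\ unbounded, calculi: $\exp$ and $\log$ do not lie in $\HI$ but only in the larger algebra $\Hol$, so domains have to be tracked carefully, and it is not automatic that $\exp(B)$ is sectorial rather than merely closed --- this is exactly where the bounded $\HI$ hypothesis enters. A secondary nuisance is the bookkeeping of angles: $B$ (resp.\ $A$) is assumed to have a bounded calculus at only one angle, so passing to resolvent bounds at \emph{all} angles requires pairing that bounded calculus with the $0$-strip-type (resp.\ $0$-sectorial) resolvent estimates rather than using the calculus alone.
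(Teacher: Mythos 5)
The paper does not prove this lemma itself but defers entirely to \cite[Proposition 5.3.3, Theorems 4.3.1 and 4.2.4, Lemma 3.5.1]{Haasa}, and your sketch reconstructs precisely the argument of those cited results: transfer through the conformal map $\exp / \log$ between $\Str_\omega$ and $\Sigma_\omega$, membership of $\exp$ and $\log$ in the extended algebras $\Hol(\Str_\omega)$ and $\Hol(\Sigma_\omega)$, the composition rule for the extended calculi, spectral inclusion, and resolvent estimates obtained by pairing the bounded $\HI$ calculus at the one given angle with the primary calculus at small angles. The outline is correct and correctly locates where the bounded $\HI(\Str_\omega)$ hypothesis is indispensable (to make $e^B$ sectorial rather than merely closed), so it is essentially the same approach as the proof the paper cites.
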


Let $B$ be an operator of $0$-strip-type, $p \in [1,\infty)$ and $\alpha > \frac{1}{p}.$
We say that $B$ has a (bounded) $\Sobolev^\alpha_p$ calculus if there exists a constant $C > 0$ such that
\[ \|f(B)\| \leq C \|f\|_{\Sobolev^\alpha_p} \quad (f\in \bigcap_{\omega > 0} \HI(\Str_\omega) \cap \Sobolev^\alpha_p) .\]
In this case, by density of $\bigcap_{\omega > 0} \HI(\Str_\omega) \cap \Sobolev^\alpha_p$ in $\Sobolev^\alpha_p$,
the definition of $f(B)$ can be continuously extended to $f \in \Sobolev^\alpha_p.$

Assume that $B$ has a $\Sobolev^\alpha_p$ calculus for some $\alpha > \frac{1}{p}.$
Let $f : \R \to \C$ be a function such that $t \mapsto f(t) e^{\epsilon t} (1 + e^{\epsilon t})^{-2}$ belongs to $\Sobolev^\alpha_{p}$ for some $\epsilon > 0.$
We define the operator $f(B)$ to be the closure of
\[ \begin{cases}
D_B \subset X & \longrightarrow X \\
x & \longmapsto \sum_{n \in \Z} (\equi_n f)(B)x,
\end{cases}
\]
where $D_B = \{ x \in X :\: \exists N \in \N:\: \equi_n(B)x= 0 \quad (|n| \geq N) \}$ and $(\equi_n)_{n \in \Z}$ is an equidistant partition of unity (see Lemma \ref{Lem Classical and modern Hoermander condition}).

Then there is a version of Lemma \ref{Lem Soaloc calculus}, for which a proof can be found in \cite[Proposition 4.25]{Kr}.
Let $\Wa = \{ f \in L^p_{\text{loc}}(\R) : \: \|f\|_{\Wa} = \sup_{n \in \Z} \| \equi_n f \|_{\Soa} < \infty \}.$
Note that the H\"ormander class $\Wa$ is covered by the variant of Lemma \ref{Lem Soaloc calculus}.
Let $p \in (1,\infty),\: \alpha > \frac1p$ and let $B$ be an operator of $0$-strip-type.
We say that $B$ has a (bounded) $\Wa$ calculus if there exists a constant $C > 0$ such that
\[ \|f(B)\| \leq C \|f\|_{\Wa} \quad (f \in \bigcap_{\omega > 0} \HI(\Str_\omega) \cap \Wa ). \]
The strip-type version of the main Theorem \ref{Thm Sufficient conditions BIP} reads as follows.

\begin{thm}\label{Thm Vertical chain of conditions strip}
Let $B$ be $0$-strip-type operator with $\HI$ calculus on some Banach space with property $(\alpha).$
Denote $U(t)$ the $C_0$-group generated by $iB.$
For $r \in (1,2]$ and $\alpha > \frac1r,$ consider the condition
\begin{equation}
B \text{ has an }R\text{-bounded }\Wor^\alpha_r\text{ calculus.}
\tag*{$(C_r)_\alpha$}
\end{equation}
Furthermore, for $\alpha \geq 0,$ we consider the conditions
\begin{enumerate}
\item[(I${)_\alpha}$] There exists $C > 0$ such that for all $t \in\R,\,\|U(t)\| \leq C (1+|t|)^\alpha.$
\vspace{0.2cm}
\item[(II${)_\alpha}$] The set $\{ \tma U(t):\:t \in \R\}$ is (semi-)$R$-bounded.
\vspace{0.2cm}
\end{enumerate}
Then the following hold.
\begin{enumerate}
\item[(a)] Let $r \in (1,2]$ such that $\frac1r > \frac{1}{\type X} - \frac{1}{\cotype X}$ and $\beta > \alpha + \frac1r.$
Then (I$)_\alpha$ implies $(C_r)_\beta.$
\item[(b)] Consider $\alpha,\beta \geq 0$ with $\beta > \alpha + \frac12.$
Then (II$)_\alpha$ implies $(C_2)_\beta.$
\end{enumerate}
\end{thm}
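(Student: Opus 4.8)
The plan is to transfer everything from the sectorial Theorem \ref{Thm Sufficient conditions BIP} through the logarithm correspondence recorded in Lemma \ref{lem-strip-type}. Since $B$ is of $0$-strip-type and has a bounded $\HI(\Str_\omega)$ calculus for some $\omega \in (0,\pi)$, Lemma \ref{lem-strip-type} provides a $0$-sectorial operator $A$ with $B = \log(A)$. Because $\exp$ maps the strip $\Str_\omega$ biholomorphically onto the sector $\Sigma_\omega$, the substitution $g \mapsto g \circ \log$ is an isometric isomorphism of Banach algebras $\HI(\Str_\omega) \to \HI(\Sigma_\omega)$ which, by Lemma \ref{lem-strip-type}, satisfies $(g \circ \log)(A) = g(B)$; hence $A$ inherits a bounded $\HI(\Sigma_\omega)$ calculus, and since $X$ has property $(\alpha)$, $A$ is automatically $R$-sectorial. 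Thus $A$ meets all the standing hypotheses of Theorem \ref{Thm Sufficient conditions BIP}. Finally, the $C_0$-group $U(t) = e^{itB}$ generated by $iB$ coincides with the group of imaginary powers $A^{it}$ of $A$, as $\log A$ is exactly the generator of $t \mapsto A^{it}$.

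Next I would set up the dictionary between the two H\"ormander classes. For $g : \R \to \C$ write $h = g \circ \log : \R_+ \to \C$, so that $g = h_e$ in the notation of Definition \ref{Def Mih Hor}. Then, with $(\equi_n)_{n \in \Z}$ an equidistant partition of unity,
\[ \|g\|_{\Wor^\alpha_r} = \sup_{n \in \Z} \| \equi_n g \|_{\Sobolev^\alpha_r} = \sup_{n \in \Z} \| \equi_n h_e \|_{\Sobolev^\alpha_r} \cong \|h\|_{\Hor^\alpha_r} , \]
the equivalence being the content of Lemma \ref{Lem Classical and modern Hoermander condition}. Combined with $g(B) = h(A)$, this identifies $\bigcap_{\nu > 0}\HI(\Str_\nu) \cap \Wor^\alpha_r$ with $\bigcap_{0 < \nu < \pi}\HI(\Sigma_\nu) \cap \Hor^\alpha_r$ and shows that $B$ has a bounded (respectively $R$-bounded) $\Wor^\alpha_r$ calculus if and only if $A$ has a bounded (respectively $R$-bounded) $\Hor^\alpha_r$ calculus, with comparable constants and $R$-bounds. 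The extended H\"ormander calculi of Section \ref{sec-extended-Hoermander-calculus} transport under $B = \log A$ in the same way, since the equidistant partition of unity on $\R$ pulls back to a dyadic partition of unity $(\dyad_n)_{n \in \Z}$ of $\R_+$ and $D_B$ to $D_A$; compare \cite[Proposition 4.25]{Kr}.

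With this dictionary in hand, both assertions are immediate. For (b): hypothesis (II)$_\alpha$ says that $\{\tma U(t) :\: t \in \R\} = \{\tma A^{it} :\: t \in \R\}$ is (semi-)$R$-bounded, hence in particular semi-$R$-bounded, so Theorem \ref{Thm Sufficient conditions BIP}(1), using property $(\alpha)$ of $X$, yields an $R$-bounded $\Hor^\beta_2$ calculus of $A$ for every $\beta > \alpha + \tfrac12$; transporting back through the dictionary gives $(C_2)_\beta$. For (a): hypothesis (I)$_\alpha$ says that $\{(1 + |t|)^{-\alpha} A^{it} :\: t \in \R\}$ is bounded, so Theorem \ref{Thm Sufficient conditions BIP}(2), with $r \in (1,2]$ satisfying $\tfrac1r > \tfrac{1}{\type X} - \tfrac{1}{\cotype X}$ and with property $(\alpha)$ of $X$, yields an $R$-bounded $\Hor^\beta_r$ calculus of $A$ for every $\beta > \alpha + \tfrac1r$; transporting back gives $(C_r)_\beta$.

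The only step requiring genuine care — everything else being bookkeeping — is the second paragraph: checking that $g \mapsto g \circ \log$ genuinely intertwines the $\Wor^\alpha_r$ calculus of $B$, including its extension to the calculus core $D_B$, with the $\Hor^\alpha_r$ calculus of $A$ on $D_A$, so that the quantitative equivalence of the $\Wor^\alpha_r$ and $\Hor^\alpha_r$ norms passes to the corresponding $R$-bounds. This is essentially already contained in Lemma \ref{lem-strip-type} and \cite[Proposition 4.25]{Kr}, so no new estimates are needed.
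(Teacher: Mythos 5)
Your proposal is correct and follows exactly the paper's route: the paper's own proof is the one-line reduction "set $A = e^B$ and apply Theorem \ref{Thm Sufficient conditions BIP} together with Lemma \ref{lem-strip-type}," and your argument simply fills in the details of that reduction (the transfer of the $\HI$ calculus and $R$-sectoriality, the identification $U(t) = A^{it}$, and the norm equivalence $\|g\|_{\Wor^\alpha_r} \cong \|g\circ\log\|_{\Hor^\alpha_r}$ from Lemma \ref{Lem Classical and modern Hoermander condition}). No discrepancies.
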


\begin{proof}
Considering the $0$-sectorial operator $A = e^B,$ the Theorem follows at once from the sectorial Theorem \ref{Thm Sufficient conditions BIP} together with Lemma \ref{lem-strip-type}.
\end{proof}

\end{document}